\theoremstyle{plain}
\newtheorem{theorem}{Theorem}[section]
\newtheorem*{theorem*}{Theorem}
\newtheorem{lemma}[theorem]{Lemma}
\newtheorem{proposition}[theorem]{Proposition}
\newtheorem{corollary}[theorem]{Corollary}
\theoremstyle{definition}
\newtheorem{definition}[theorem]{Definition}
\theoremstyle{remark}
\newtheorem{remark}[theorem]{Remark}
\numberwithin{equation}{section}
\newcommand{\C}{\mathbb{C}}
\newcommand{\R}{\mathbb{R}}
\newcommand{\Z}{\mathbb{Z}}
\newcommand{\E}{\mathcal{E}}
\newcommand{\V}{\mathbb{V}}
\newcommand{\X}{\mathbf{X}}
\newcommand{\HH}{\mathbb{H}}
\newcommand{\Ss}{\mathbb{S}}
\newcommand{\eps}{\varepsilon}
\newcommand{\mc}{\mathcal}
\newcommand{\dd}{\mathrm{d}}
\DeclareMathOperator{\vol}{vol}
\DeclareMathOperator{\e}{\mathbf{e}}
\DeclareMathOperator{\Hol}{Hol}
\DeclareMathOperator{\comp}{comp}
\newcommand{\be}{\begin{equation}}
\newcommand{\ee}{\end{equation}}
\title
[Isometric extensions of Anosov flows via microlocal analysis]
{Isometric extensions of Anosov flows via microlocal analysis}
\author{Thibault Lefeuvre}
\address{Université de Paris and Sorbonne Université, CNRS, IMJ-PRG, F-75006 Paris, France.}
\email{tlefeuvre@imj-prg.fr}
\begin{document}

\begin{abstract}
The aim of this note is to revisit the classical framework developed by Brin, Pesin \cite{Brin-Pesin-74, Brin-75-1,Brin-75-2} and others to study ergodicity and mixing properties of isometric extensions of volume-preserving Anosov flows, using the microlocal framework developed in the theory of Pollicott-Ruelle resonances. The approach of the present note is reinvested in a crucial way in the companion paper \cite{Cekic-Lefeuvre-Moroianu-Semmelmann-21} in order to show ergodicity of the frame flow on negatively-curved Riemannian manifolds under nearly $1/4$-pinched curvature assumption (resp. nearly $1/2$-pinched) in dimension $4$ and $4\ell+2, \ell > 0$ (resp. dimension $4\ell, \ell > 0$).
\end{abstract}

\maketitle

\tableofcontents

\newpage

\section{Introduction}

Let $M$ be a smooth closed connected manifold equipped with a smooth Anosov flow $(\varphi_t)_{t \in \R}$ generated by a vector field $X_M$, that is, such that the tangent bundle splits as a continuous flow-invariant direct sum
\[
TM = \R X_M \oplus \mathbb{E}^s_M \oplus  \mathbb{E}^u_M,
\]
and there exist constants $C, \lambda > 0$ such that
\begin{equation}
\label{equation:anosov}
\begin{array}{l}
\forall t \geq 0, \forall w \in \mathbb{E}^s_M, \qquad |\dd\varphi_t(w)| \leq Ce^{-t\lambda}|w|, \\
\forall t \leq 0, \forall w \in \mathbb{E}^u_M, \qquad |\dd\varphi_t(w)| \leq Ce^{-|t|\lambda}|w|.
\end{array}
\end{equation}
The smooth metric $|\bullet| = g(\bullet,\bullet)^{1/2}$ is arbitrary here. We will further assume that $(\varphi_t)_{t \in \R}$ is \emph{volume-preserving}, that is, it preserves a smooth \emph{probability measure} $\mu_M$.

Let $\pi : E \to M$ be a Riemannian fiber bundle with fiber isometric to $(F,g_F)$, a smooth closed oriented Riemannian manifold, see \S\ref{ssection:lift} for a formal definition. We consider an extension $(\Phi_t)_{t \in \R}$ to $E$ of the flow $(\varphi_t)_{t \in \R}$ i.e. such that
\begin{equation}
\label{equation:lift}
\pi \circ \Phi_t = \varphi_t \circ \pi,
\end{equation}
and we assume that for all $x \in M$ and $t \in \R$, $\Phi_t : E_x \to E_{\varphi_t x}$ is an isometry. We call such a flow an \emph{isometric extension of an Anosov flow}. It is a typical example of a \emph{partially hyperbolic dynamical system} and natural examples are provided by extensions of Anosov flows to principal $G$-bundles, such as the frame flow, see \cite{Brin-75-1,Brin-75-2,Brin-Gromov-80,Brin-Karcher-83,Burns-Pollicott-03} and \S\ref{ssection:principal} for a discussion on the principal $G$-bundle case. Since $X_M$ preserves a smooth measure $\mu_M$ by assumption and $(\Phi_t)_{t \in \R}$ acts by fiberwise isometries, the vector field $X_E$ preserves a canonical smooth measure $\mu_E$ on $E$ (the pullback of the measure on $M$ wedged with the measure on the fibers of $E$) and a natural question is to understand the ergodicity and the mixing properties of the flow $(\Phi_t)_{t \in \R}$ with respect to this measure. Below, the $L^2$ space on $E$ will always be understood with respect to the measure $\mu_E$.

Let $x_\star \in M$ be an arbitrary periodic point and let $\mathbf{G}$ be Parry's free monoid associated to $x_\star$, which is defined as the formal set of words made of homoclinic orbits to $x_\star$ as introduced in \cite{Cekic-Lefeuvre-21-1}, see \S\ref{section:preliminaries} for further details. We shall see below that the flow $(\Phi_t)_{t \in \R}$ induces a representation
\[
\rho : \mathbf{G} \to \mathrm{Isom}(F),
\]
where $F \simeq E_{x_\star}$ is identified with the fiber over $x_\star$. Since $\mathrm{Isom}(F)$ is a closed Lie group \cite{Myers-Steenrod-39}, the closure of the image of the representation $H := \overline{\rho(\mathbf{G})}$ is a closed Lie subgroup \cite[Theorem 2.3]{Helgason-01}. The group $H$ is well-defined up to conjugacy in $\mathrm{Isom}(F)$. Since $E$ may not be a principal bundle, the orbit space $H \backslash F$ may not be a smooth manifold but it is still a Hausdorff topological space endowed with a natural measure $\nu := \mathrm{pr}_* \mu_F$, where $\mathrm{pr} : F \to H \backslash F$ is the projection and $\mu_F$ is the Riemannian measure on $F$.\\

We will show:

\begin{theorem*}
\label{theorem:brin}
Under the above assumptions, the followings holds:
\begin{enumerate}[(a)]
\item \emph{Ergodicity:} There exists an open $H$-invariant subset $F_0 \subset F$ of full measure (with respect to $\mu_F$) such that for all $\mathfrak{p} \in H \backslash F_0$, there exists an associated flow-invariant smooth submanifold $Q({\mathfrak{p}}) \subset E$ which is a smooth Riemannian fiber bundle over $M$ with fiber diffeomorphic to a closed manifold $Q_0$ (independent of $\mathfrak{p}$) and such that the restriction of $(\Phi_t)_{t \in \R}$ to $Q(\mathfrak{p})$ is ergodic (with respect to the flow-invariant smooth measure induced by $\mu_E$ on $Q(\mathfrak{p})$). Moreover, there exists a natural isometry
\begin{equation}
\label{equation:psi}
\Psi : L^2(H \backslash F, \nu) \xrightarrow{\sim}  \ker_{L^2}(X_E).
\end{equation}
In particular, $H$ acts transitively on the fiber $F$ if and only if the flow $(\Phi_t)_{t \in \R}$ is ergodic on $E$. \\

\item \emph{Mixing:} If the fiber of $Q(\mathfrak{p})$ is not the total space of a Riemannian submersion over the circle and $(\varphi_t)_{t \in \R}$ is mixing, then the restriction of $(\Phi_t)_{t \in \R}$ to $Q(\mathfrak{p})$ is also mixing. In particular, if $H$ acts transitively on the fiber $F$ and $F$ is not the total space of a fiber bundle over the circle, then the flow $(\Phi_t)_{t \in \R}$ is mixing on $E$.
\end{enumerate}
\end{theorem*}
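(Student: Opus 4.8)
\emph{Overall strategy.} The plan is to run everything through the Pollicott--Ruelle resonance calculus for the generator $X_E$: although $(\Phi_t)$ is only partially hyperbolic, the fibre directions are neutral, so the anisotropic spaces can still be built using escape functions controlled by the co-bundles $\pi^*E^*_u(M)$ and $\pi^*E^*_s(M)$ pulled back from $M$. I would first record two inputs from this framework. (i) \emph{Regularity of resonant states}: any resonant state of $X_E$ at a resonance $i\omega\in i\R$ has $\WF(u)\subseteq \pi^*E^*_u(M)$; since $\pi^*E^*_u(M)\cap\pi^*E^*_s(M)$ is the zero section, such a state is smooth transversally to the unstable directions — in particular, smooth along each fibre of $\pi$ (restrict to $\pi^{-1}$ of a weak-stable leaf, whose conormal is $\pi^*E^*_s(M)$). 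Applied at $\omega=0$, where a standard propagation argument shows a state is simultaneously resonant and co-resonant, this gives $\ker_{L^2}X_E\subset C^\infty(E)$. (ii) \emph{Spectral characterisation of mixing}: since $(\Phi_t)$ preserves $\mu_E$ and $(\varphi_t)$ is mixing, the restriction of $(\Phi_t)$ to a flow-invariant sub-bundle is mixing if and only if $X_E$ has no resonance on $i\R\setminus\{0\}$ there.

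\emph{Part (a).} First I would show that a smooth $X_E$-invariant function $u$ is constant along the strong stable and unstable manifolds of $(\Phi_t)$, which are graphs over those of $(\varphi_t)$ via the stable and unstable holonomies; restricting $u$ to $F\simeq E_{x_\star}$ and composing holonomies along homoclinic orbits to $x_\star$ shows $u|_F$ is invariant under $\rho(\mathbf{G})$, hence under $H$. Conversely, any $v\in C^\infty(F)^H$ extends by holonomy — well-definedly, using topological transitivity of $(\varphi_t)$ — to an element of $\ker_{L^2}X_E$; Fubini together with $H$-invariance of $|v|^2$ gives $\|u\|_{L^2(E,\mu_E)}^2=\vol(M)\,\|v\|_{L^2(F,\mu_F)}^2$, so this correspondence extends by density to the isometry $\Psi:L^2(H\backslash F,\nu)\xrightarrow{\sim}\ker_{L^2}X_E$. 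Since $H\subset\mathrm{Isom}(F)$ is compact, its action on $F$ has a unique principal orbit type $Q_\star$ and the principal stratum $F_0$ is open, dense, of full $\mu_F$-measure; disintegrating $\mu_E$ through $\Psi$ produces, for $\mathfrak{p}\in H\backslash F_0$, an ergodic component $\mu_{\mathfrak{p}}$ whose support $Q(\mathfrak{p})$ is a smooth sub-bundle of $E$ with fibre $\simeq Q_\star$ on which $(\Phi_t)$ restricts to an isometric extension whose structure group acts transitively on the fibre; re-running the identification over $Q(\mathfrak{p})$ then gives that this restriction has only constant $L^2$-invariant functions, i.e. ergodicity. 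The final equivalence in (a) is the special case $Q_\star=F$: $H$ acts transitively on $F$ $\iff$ $\mathrm{pr}_*\mu_F$ is a Dirac mass $\iff$ $L^2(H\backslash F,\nu)=\mathbb{C}$ $\iff$ $\ker_{L^2}X_E=\mathbb{C}$.

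\emph{Part (b).} I would argue by contradiction: if $(\Phi_t)|_{Q(\mathfrak{p})}$ is not mixing, then by (ii) there are $\omega\neq 0$ and a resonant state $u\in L^2(Q(\mathfrak{p}))$ with $X_E u=i\omega u$, so $u\circ\Phi_t=e^{i\omega t}u$. Then $X_E(|u|^2)=0$, and by the ergodicity from (a) $|u|^2$ is constant, normalised to $|u|\equiv 1$; by input (i), $u$ is smooth along the fibres. Because $\Phi_t$ acts fibrewise by isometries and intertwines $u$ with $e^{i\omega t}u$, the fibrewise gradient norm $|\dd_{\mathrm{fib}}u|$ is $\Phi_t$-invariant, hence constant $\equiv c$ by ergodicity. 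If $c=0$ then $u$ is constant on each fibre and descends to a measurable $w:M\to S^1$ with $w\circ\varphi_t=e^{i\omega t}w$, contradicting the mixing of $(\varphi_t)$. If $c\neq 0$ then $\alpha:=-i\,u^{-1}\dd u$ restricts to the fibre $Q_\star$ as a closed real $1$-form of constant norm $c$ with periods in $2\pi\Z$, so $c^{-1}\alpha$ has unit norm and exhibits $Q_\star$ as the total space of a Riemannian submersion onto a circle — contradicting the hypothesis on the fibre of $Q(\mathfrak{p})$. Hence $(\Phi_t)|_{Q(\mathfrak{p})}$ is mixing. For the ``in particular'' statement, when $H$ is transitive $H\backslash F$ is a point and $Q_\star=F$, and ``$F$ is not the total space of a fibre bundle over $S^1$'' — i.e., by Ehresmann's theorem, $F$ admits no submersion onto $S^1$ — is stronger than ``$F$ is not a Riemannian submersion over $S^1$'', so the argument applies.

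\emph{Main obstacle.} The dynamical heart is showing that $Q(\mathfrak{p})$ is a genuine smooth sub-bundle of $E$ — not merely a measurable invariant set — and that $(\Phi_t)$ restricts to it as an isometric extension of $(\varphi_t)$: this should come from combining the slice theorem for the compact group $H$ with topological transitivity of the base flow, so that the orbit-type stratification of the fibre is carried coherently around $M$, but handling the non-principal stratum $F\setminus F_0$ cleanly is the delicate point. On the microlocal side, the work is in setting up the resonance calculus for the partially hyperbolic $X_E$ and establishing the fibrewise regularity of resonant states and the mixing criterion in (i)--(ii); both rely essentially on the framework developed earlier in the paper.
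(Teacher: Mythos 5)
Your overall architecture matches the paper's (holonomy along homoclinic orbits, the transitivity group $H$, principal orbits via the slice theorem, the isometry $\Psi$, and the ``Riemannian submersion onto $\Ss^1$'' obstruction to mixing), but there is a genuine gap in the microlocal inputs (i)--(ii) on which everything else rests. You assert a Pollicott--Ruelle resonance calculus for the partially hyperbolic generator $X_E$ itself, built from escape functions pulled back from $M$. This cannot work as stated: the fibre directions are neutral (the flow acts by isometries there), so no escape function decays along the lifted flow on the fibre codirections $\V^*$, the essential spectrum is not pushed into $\{\Re(z)<0\}$, and the resolvent of $X_E$ has no meromorphic continuation across $i\R$ on any anisotropic space that sees all fibre frequencies. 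In fact the conclusion of your input (i) is false: $\ker_{L^2}X_E\not\subset C^\infty(E)$ in general (take $H$ trivial and push a non-smooth $H$-invariant $L^2$ function on $F$ through $\Psi$). The correct move, which is the paper's, is to decompose $L^2$ of the fibres into eigenspaces $\mc{E}^{(k)}$ of the vertical Laplacian; each $\mc{E}^{(k)}$ is a finite-rank flow-invariant Hermitian bundle over $M$, the induced operator $\X_k$ is an Anosov transfer operator to which Proposition \ref{proposition:regularity} and the meromorphic continuation apply, and one then works mode by mode (smoothness of each Fourier mode, hence density of $\ker_{C^\infty}X_E$ in $\ker_{L^2}X_E$, replaces your claimed global smoothness). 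Likewise your mixing criterion (ii) hides a real step: absence of point spectrum on $i\R\setminus\{0\}$ only gives weak mixing for a general unitary flow; the upgrade to mixing uses Stone's formula and the absolute continuity of the spectral measure of each $\X_k$, which again requires the mode-by-mode resonance continuation.

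Two further points. First, your construction of $Q(\mathfrak{p})$ is circular as written: disintegrating $\mu_E$ ``through $\Psi$'' presupposes the projection $E\to H\backslash F$, i.e.\ the very fibration by the $Q(\mathfrak{p})$'s you are trying to build, and ``the support is a smooth sub-bundle'' is precisely the hard assertion. The paper builds $Q(\mathfrak{p})$ by pushing the $H$-orbit $Q_{x_\star}(\mathfrak{p})$ along stable/unstable/central holonomies over homoclinic orbits and taking closures (a H\"older bundle), then, for principal $\mathfrak{p}$, manufactures $H$-invariant defining functions from the slice theorem and upgrades their regularity with the Journ\'e lemma (fibrewise smooth by Myers--Steenrod, locally constant along the transverse stable/unstable/flow foliations); you correctly flag this as the main obstacle but do not supply the mechanism. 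Second, in part (b) your case $c=0$ concludes that $u$ descends to $M$, but $u$ is then only locally constant on fibres; if $Q_\star$ is disconnected, $u$ descends only to a finite cover $Q'\to M$, and you need the separate fact that a mixing volume-preserving Anosov flow remains mixing on finite covers (the paper's Lemma \ref{lemma:mixing-cover}, via the Anosov alternative). The rest of your part (b) — $|u|\equiv1$, constancy of the fibrewise gradient norm, and the resulting Riemannian submersion $Q_\star\to\Ss^1$ — coincides with the paper's argument.
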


We formulate some remarks on Theorem \ref{theorem:brin}: 

(1) The extension isomorphism $\Psi$ in \eqref{equation:psi} is defined in \S\ref{ssection:ergodicity}, Lemma \ref{lemma:isomorphism}. It consists in ``pushing'' an $H$-invariant $L^2$-function defined on a certain fiber $E_{x_\star} \simeq F$ by the flow $(\Phi_t)_{t \in \R}$ in order to obtain a well-defined invariant function in $\ker_{L^2}(X_E)$.

(2) The manifold $Q_{0}$ is obtained as a \emph{principal orbit} for the $H$-action on the fiber $F$, see \S\ref{ssection:actions} where this is further described. The condition that $Q_0$ does not fiber over the circle is sufficient but obviously not necessary for mixing as the frame flow over a $3$-dimensional hyperbolic manifold (the frame bundle is then an $\Ss^1$-bundle over the $5$-dimensional unit tangent bundle of the manifold) has fiber isometric to the circle and is nevertheless mixing, and even exponentially mixing \cite{Howe-Moore-79, Moore-87, Guillarmou-Kuster-20}. Yet, it is simple to construct an example of an extension to an $\Ss^1$-bundle that is ergodic and not mixing: this is satisfied by the flow $\Phi_t(x,\theta) := (\varphi_t(x),\theta + t \text{ mod } 2\pi)$ on $M \times \Ss^1$ for instance, see \S\ref{ssection:mixing}, Lemma \ref{lemma:circle}. Observe that, when a manifold is connected, a necessary condition for it to fiber over the circle is that it is infinite. Compact semisimple Lie groups have finite fundamental group (see \cite[Corollary 3.9.4]{Duistermaat-Kolk-00}) so they never fiber over $\Ss^1$, which easily implies that the extension of a mixing volume-preserving Anosov flow to a principal $G$-bundle, where $G$ is a compact semisimple compact Lie group, is ergodic if and only if it is mixing, see \S\ref{ssection:principal} for a discussion. Actually, the original aim of this note was to deal with Anosov flows extended to principal bundles but since the theory is essentially the same for isometric extensions, we wrote it in this context.

(3) The volume-preserving Anosov flow $(\varphi_t)_{t \in \R}$ is \emph{not} mixing if and only if it is the suspension of an Anosov diffeomorphism by a constant roof function: this is known as the Anosov alternative, see \cite{Anosov-67,Plante-72} or Appendix \ref{appendix}.

(4) The main result is stated for Anosov dynamics but it should be clear from the proofs that it could be adapted to discrete-time dynamics or to the setting of Axiom A flows.

(5) Eventually, smoothness of the objects could be considerably weakened and it is very likely that only $C^{1+\eps}$-regularity (for some $\eps > 0$) of the vector field $X_E$ is sufficient to make the whole machinery work. The microlocal arguments in \S\ref{ssection:invariant} should then be replaced by the more recent technology developed in \cite[Sections 2 and 3]{Adam-Baladi-18}.  \\

To conclude, let us mention that some of the results of this manuscript might already be contained in the literature: For instance, Brin showed that ergodicity is equivalent to $H = G$ in the case of principal $G$-bundles \cite{Brin-Pesin-74,Brin-75-1,Brin-75-2} and Dolgopyat \cite[Corollary 4.8]{Dolgopyat-02} had already noticed (in the case of Anosov diffeomorphisms extensions) that semisimplicity of $G$ implies that ergodicity is equivalent to mixing. However, the precise structure of $\ker_{L^2} X_E$ described in \eqref{equation:psi} seems to be new. More generally, results on ergodicity for isometric extensions of hyperbolic dynamics are spread out in the literature, hard to locate, and usually not written in a modern way.

Theorem \ref{theorem:brin} is also quite far from recent considerations on partially hyperbolic dynamics \cite{Hasselblatt-Pesin-06,Wilkinson-10}, where dynamical systems may not arise from such a geometric framework. The goal of this note is to gather the arguments in one place and to rewrite them in a self-contained and concise way, making use of both tools in hyperbolic dynamics (Parry's free monoid and the transitivity group, see \S\ref{ssection:lift}) and microlocal analysis (the theory of Pollicott-Ruelle resonances, see \S\ref{ssection:invariant}). The description of Anosov flows via transfer operators and anisotropic spaces is now fairly classical and was developed in the past twenty years, see  \cite{Liverani-04, Gouezel-Liverani-06,Butterley-Liverani-07,Faure-Roy-Sjostrand-08,Faure-Sjostrand-11,Faure-Tsuji-13,Dyatlov-Zworski-16}. In particular, thanks to this analytic approach, the Hopf argument (which is classically at the root of ergodicity) may be skirted.

 Moreover, since our description of the transitivity group via Parry's free monoid has a more representation-theoretic flavour, it makes it also clearer that the (non-)ergodicity of the extended flow on principal bundles is intimately connected to the (non-)existence of reductions of the structure group of the bundle, or to flow-invariant sections on certain associated vector bundles (see Proposition \ref{proposition:isomorphism} for instance). These questions may then be addressed by means of geometric identities such as the twisted Pestov/Weitzenböck identities \cite{Guillarmou-Paternain-Salo-Uhlmann-16}, see \cite[Section 3]{Cekic-Lefeuvre-Moroianu-Semmelmann-21} where this is further discussed.
 
 In particular, this idea is used in a fundamental way in the companion paper \cite{Cekic-Lefeuvre-Moroianu-Semmelmann-21} where we show that the frame flow of nearly $1/4$-pinched manifolds (resp. nearly $1/2$-pinched) of dimension $4$ and $4 \ell + 2, \ell >0$ (resp. $4\ell$) is ergodic and mixing. In dimensions $4$ and $4 \ell + 2, \ell >0$, this almost solves a long-standing conjecture of Brin \cite[Conjecture 2.6]{Brin-82}. \\

\noindent \textbf{Notations:} For standard notions in hyperbolic dynamics (e.g. stable, unstable manifolds etc.), we refer to \cite{Hasselblatt-Katok-95, Fisher-Hasselblatt-19}. On a smooth closed manifold $M$, given a (partially) hyperbolic flow $(\varphi_t)_{t \in \R}$ generated by a smooth vector field $X \in C^\infty(M,TM)$, we will call flowpath an orbit of the flow $(\varphi_t)_{t \in \R}$ and $us$-path a continuous path in a stable or unstable leaf of $(\varphi_t)_{t \in \R}$. If $X$ preserves the smooth measure $\mu$, we will say that the flow $(\varphi_t)_{t \in \R}$ is \emph{ergodic} (with respect to $\mu$) if $u \in L^2(M,\dd\mu), Xu = 0$ implies that $u$ is constant, and \emph{mixing} if for all $u,v \in L^2(M,\dd \mu)$ with $u$ being of $0$-average (with respect to the invariant measure $\mu$), the correlation $C_t(u,v) := \int \varphi_t^* u \cdot  \overline{v} ~\dd \mu$ converges to $0$ as $t \to \infty$. \\

\noindent \textbf{Acknowledgement:} We warmly thank M. Ceki{\'c} for pointing out that an argument was incomplete in an earlier version of this manuscript, for his thorough reading and for several comments improving the presentation. We are also grateful to the anonymous referees for their comments, hopefully clarifying the exposition.

\section{Preliminaries}

\label{section:preliminaries}

In the following, we will always assume that $M$ is a smooth closed connected manifold equipped with an Anosov flow $(\varphi_t)_{t \in \R}$, with generator $X_M \in C^\infty(M,TM)$, preserving a smooth measure $\mu_M$. In this case, the flow is necessarily transitive, ergodic and it is mixing if and only if it is not the suspension of an Anosov diffeomorphism by a constant roof function, see \cite{Anosov-67,Plante-72} or Appendix \ref{appendix} for a proof.

\subsection{Isometric extensions of Anosov flows}

\label{ssection:lift}

Let $\pi : E \to M$ be a smooth fiber bundle with fiber diffeomorphic to a closed manifold $F$. We will say that $E$ is a \emph{Riemannian fiber bundle} over $M$ with fiber isometric to $(F,g_F)$ if $E$ admits a reduction of its structure group to $\mathrm{Isom}(F)$, that is, the transition functions between local trivializations of the bundle can be taken with values in $\mathrm{Isom}(F)$.
For instance, if $\mc{E} \to M$ is a Euclidean vector bundle of rank $r$ over $M$, then the unit sphere bundle $E := \left\{ (x,v) \in E ~|~ x \in M, |v|=1\right\}$ (endowed with the restriction of the metric to the sphere) is a Riemannian fiber bundle with fiber isometric to the standard round metric $(\Ss^{r-1},g_{\Ss^{r-1}})$.

\begin{definition} We say that $(\Phi_t)_{t \in \R}$ is an isometric extension of the flow $(\varphi_t)_{t \in \R}$ if
\[
\pi \circ \Phi_t = \varphi_t \circ \pi,
\]
for all $t \in \R$ and $\Phi_t : E_x \to E_{\varphi_t x}$ is an isometry for all $x \in M, t \in \R$.
\end{definition}

The flow $(\Phi_t)_{t \in \R}$ is a typical example of a \emph{partially hyperbolic flow} i.e. the tangent space $TE$ admits a continuous flow-invariant decomposition as
\[
TE = \V \oplus \mathbb{E}^s_E \oplus \mathbb{E}^u_E \oplus \R X_E,
\]
where $\V$ is the \emph{vertical} bundle (the tangent bundle to the fibers of $E$) and $d\pi(\mathbb{V}) = 0$, $X_E$ is the generator of $(\Phi_t)_{t \in \R}$ and $d\pi(X_E) = X_M$, $\mathbb{E}^{s,u}_E$ are the respective stable/unstable bundles on $E$ and satisfy $d \pi(\mathbb{E}^{s,u}_E) = \mathbb{E}^{s,u}_M$, see \cite{Burns-Pugh-Shub-Wilkinson-99}. The bundles $\mathbb{E}^{s,u}_{E,M}$ all integrate to flow-invariant stable/unstable manifolds on $M$ (resp. on $E$) which will be denoted by $W^{s,u}_{M,E}$.

Given $y \in W^{s}_M(x)$, we define the \emph{stable holonomy} $\mathrm{Hol}^s_{x \to y} : E_x \to E_y$ as the unique point of intersection
\begin{equation}
\label{equation:defhol}
w'  = \mathrm{Hol}^s_{x \to y} w := W^s_{E}(w) \cap E_y.
\end{equation}
The \emph{unstable holonomy} is defined similarly for $y \in W^u_M(x)$ and we will call \emph{flow holonomy} the map $\Phi_t : E_x \to E_{\varphi_t x}$ given by the flow itself. For fixed $x \in M, y \in W^s_M(x)$, the map $\mathrm{Hol}^s_{x \to y} : E_x \to E_y$ is a smooth isometry: indeed, it is a continuous distance-preserving map between $E_x \to E_y$ since the flow $\Phi_t$ is an isometry between fibers; by \cite{Myers-Steenrod-39, Palais-57}, it is smooth. It can also be checked that $\mathrm{Hol}^s$ depends Hölder-continuously on the basepoints $x \in M, y \in W^s_M(x)$: this is due to \eqref{equation:defhol} combined with the fact that the foliation $W^{s}_E$ is Hölder-continuous.

Alternatively, it will be convenient to have at our disposal another equivalent definition for these holonomies. For that, let $g$ be an arbitrary metric on $M$ (for instance, the same as the one defining \eqref{equation:anosov}). We also fix an arbitrary Ehresmann connection\footnote{This simply consists in choosing a smooth \emph{horizontal} subbundle $\HH \subset TE$ such that $\V \oplus \HH = TE$; such an object allows to define a \emph{parallel transport} of sections of $E$ along curves on the base $M$ on $E$, see \cite[Section 9.9]{Kolar-Michor-Slovak-93}.}. For $x,y \in M$ close enough, this allows to define a map $\tau_{x \to y} : E_x \to E_y$ by parallel transport with respect to the Ehresmann connection along the unique short geodesic joining $x$ to $y$. Given $x \in M$, we let $W^{s,u}_M(x)$ be the strong stable/unstable manifolds of $x$ in $M$. 
\begin{equation}
\label{equation:holonomy}
\mathrm{Hol}^s_{x \to y} w := \lim_{t \to +\infty} \Phi_{-t} \circ \tau_{\varphi_t x \to \varphi_t y} \circ \Phi_t w.
\end{equation}
Justification for the convergence of \eqref{equation:holonomy} is almost straightforward: there exist constants $C, \lambda > 0$ such that for all $t > 0$, the distance between $d(\varphi_t x,\varphi_t y)$ is bounded by $C e^{-\lambda t}$. This implies that the distance in $E$ between $\tau_{\varphi_t x \to \varphi_t y} \circ \Phi_t w$ and the flowline of $w'$ is also controlled by $C e^{-\lambda t}$ and thus, since $\Phi_{-t}$ acts by isometries, the distance between $w'$ and $\Phi_{-t} \circ \tau_{\varphi_t x \to \varphi_t y} \circ \Phi_t w$ is also exponentially small. In the case of an affine connection, one can also argue by using the Ambrose-Singer formula, see \cite[Section 3.2.2]{Cekic-Lefeuvre-21-1}. \\

We fix an arbitrary periodic point $x_\star \in M$ of period $T_\star$ and denote its orbit by $\gamma_\star$. We let $\mc{H}$ be the set of all \emph{homoclinic orbits} to $\gamma_\star$, i.e. the set of all orbits converging in the past and in the future to $\gamma_\star$. In particular, note that $\gamma_\star \in \mc{H}$.

\begin{lemma}
\label{lemma:density}
The union of all homoclinic orbits to $\gamma_\star$ is dense in $M$.
\end{lemma}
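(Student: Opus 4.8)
The plan is to exploit the Anosov closing/shadowing lemma together with the transitivity of the flow. First, I would recall that a volume-preserving Anosov flow on a closed manifold is transitive (stated in the excerpt), so it has a dense orbit, and moreover its periodic orbits are dense (this follows from the Anosov closing lemma). Fix the reference periodic point $x_\star$ with orbit $\gamma_\star = \{\varphi_t x_\star : t \in \R\}$. The key local tool is the local product structure of an Anosov flow: for any point $p$ in a neighbourhood of $\gamma_\star$, the local strong stable and strong unstable manifolds of points on $\gamma_\star$ intersect transversally in a controlled way, so there exist points whose forward orbit converges to $\gamma_\star$ and whose backward orbit also converges to $\gamma_\star$, i.e. genuine homoclinic points.

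Second, I would argue as follows. Let $U \subset M$ be an arbitrary nonempty open set; I want to produce a homoclinic orbit to $x_\star$ meeting $U$. By transitivity of $(\varphi_t)_{t\in\R}$, there is a point $z$ whose orbit is dense, so the orbit of $z$ enters $U$ and also comes arbitrarily close to $x_\star$ at some large positive time and at some large negative time. Using the Anosov closing lemma (or a shadowing argument), one finds a point $y$ near $z$ such that the forward $\varphi$-orbit of $y$ is exponentially shadowed by the forward orbit of $x_\star$ and the backward orbit of $y$ is exponentially shadowed by the backward orbit of $x_\star$; concretely, $y$ lies in the intersection $W^s_M(\varphi_{a}x_\star) \cap W^u_M(\varphi_{b}x_\star)$ for suitable $a,b$ obtained by intersecting a local unstable manifold of a point near $x_\star$ (reached going backward along the orbit of $z$) with a local stable manifold of a point near $x_\star$ (reached going forward), these two local manifolds being brought close together near $x_\star$ and intersecting by the local product structure. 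Such a $y$ is homoclinic to $x_\star$, and by choosing the approximations fine enough one keeps $y$ inside $U$, since $y$ is close to a point on the orbit of $z$ that lies in $U$.

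Third, I would clean up the statement ``homoclinic to $x_\star$'': a point whose forward orbit converges to the orbit $\gamma_\star$ and backward orbit converges to $\gamma_\star$ lies on some strong stable manifold $W^s_M(\varphi_{a}x_\star)$ and some strong unstable manifold $W^u_M(\varphi_b x_\star)$; since $x_\star$ is periodic, $\varphi_a x_\star$ and $\varphi_b x_\star$ are again on $\gamma_\star$, so this orbit converges to $x_\star$ itself in the sense of approaching the periodic orbit — which is the working definition of a homoclinic orbit to $x_\star$ used in the paper. Thus every nonempty open set contains a point on a homoclinic orbit, which is the claim.

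The main obstacle I expect is the careful bookkeeping in the shadowing/closing step: one must simultaneously control the forward orbit (to land on a stable manifold of a point on $\gamma_\star$) and the backward orbit (to land on an unstable manifold of a point on $\gamma_\star$), while keeping the constructed point inside the prescribed open set $U$. This is handled by a standard ``broken orbit'' argument — follow the dense orbit of $z$ from inside $U$ forward until it is very close to $x_\star$, likewise backward, and then use the local product structure near $x_\star$ to connect the resulting local unstable and stable discs — but writing it with explicit constants requires some care; all the needed ingredients (Anosov closing lemma, density of periodic orbits, local product structure, transitivity) are classical and available from \cite{Hasselblatt-Katok-95, Fisher-Hasselblatt-19}.
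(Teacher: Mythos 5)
Your proposal is correct and follows essentially the same route as the paper, which simply invokes the shadowing lemma (citing \cite{Gouezel-Lefeuvre-19} and \cite{Cekic-Lefeuvre-21-1}) without writing out the details: you take a dense orbit through an arbitrary open set, follow it forward and backward until it is close to $x_\star$, and use shadowing/local product structure near the periodic orbit to produce a genuine homoclinic point still inside the open set. The bookkeeping you flag (two applications of the local product structure, one in forward and one in backward time, with the stable/unstable contraction keeping the corrected point close to the original orbit segment) is exactly the standard argument and poses no real obstacle.
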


The proof follows from the shadowing Lemma, see \cite[Theorem 3.1]{Gouezel-Lefeuvre-19} or \cite[Lemma 3.11]{Cekic-Lefeuvre-21-1} for instance. We let $\mathbf{G}$ be the formal free monoid of elements of $\mc{H}$, that is,
\[
\mathbf{G} := \left\{ \gamma_1 ... \gamma_k ~|~ k \in \Z_{\geq 0}, \gamma_i \in \mc{H}\right\},
\]
and we call $\mathbf{G}$ \emph{Parry's free monoid}\footnote{Even if this was not formally introduced like this by Parry, the idea of considering such an object should be imputed to him \cite{Parry-99}.}, as introduced in \cite{Cekic-Lefeuvre-21-1}. Besides the identity element $\mathbf{1}_{\mathbf{G}} \in \mathbf{G}$ (formally, it is the empty word), $\mathbf{G}$ contains a particular element $\gamma_\star$, which is the orbit of the point $x_\star$ itself.

\begin{definition}
\label{definition:representation}
We define the representation of Parry's free monoid
\[
\rho : \mathbf{G} \to  \mathrm{Isom}(E_{x_\star}),
\]
by the following process: given $\gamma \in \mc{H}$, $\gamma \neq \gamma_\star$, we fix two arbitrary points $x_1(\gamma) \in W^u_M(x_\star) \cap \gamma, x_2(\gamma) \in W^s_M(x_\star) \cap \gamma$ close enough to $x_\star$ and we let
\begin{equation}
\label{equation:cross}
\rho(\gamma) := \Hol^s_{x_2 \to x_\star} \circ \Hol^c_{x_1 \to x_2} \circ \Hol^u_{x_\star \to x_1}.
\end{equation}
For $\gamma_\star \in \mathbf{G}$, we set $\rho(\gamma_\star) := \Phi_{T_\star}({x_\star})$.
\end{definition}

This definition \emph{depends} on a choice of points $x_1(\gamma),x_2(\gamma)$ for every homoclinic orbit $\gamma$. 

\begin{definition}
\label{definition:transitivity}
We define the \emph{transitivity group} as $H := \overline{\rho(\mathbf{G})} \leqslant \mathrm{Isom}(E_{x_\star})$.
\end{definition}

Although it is claimed in the definition, it is not immediate that $H$ is a group since $\mathbf{G}$ is a monoid.

\begin{lemma}
$H$ is a closed subgroup of $\mathrm{Isom}(E_{x_\star})$. Moreover, it is independent of the choice of points $x_1(\gamma)$ and $x_2(\gamma)$ in Definition \ref{definition:representation}.
\end{lemma}

Observe that, being a closed subgroup of the compact Lie group $\mathrm{Isom}(E_{x_\star}) \simeq \mathrm{Isom}(F)$, $H$ is also a compact Lie group \cite[Theorem 2.3]{Helgason-01}.

\begin{proof}
For the group characterization, the only non-trivial property to check is that given $h \in H, h^{-1} \in H$. For that, it suffices to check that given $g \in \mathbf{G}$ $\rho(g)^{-1} \in H$. But since $\mathrm{Isom}(F)$ is compact, there exists a sequence $(n_k)_{k \in \Z_{\geq 0}}$ such that $\rho(g)^{n_k} = \rho(g^{n_k}) \to_{k \to \infty} \rho(g)^{-1}$, which proves the claim.

We now consider an homoclinic orbit $\gamma$ and pairs of points $x_1,x_2$ and $x_1', x_2'$ as in Definition \ref{definition:representation}. We let $\rho(\gamma)$ and $\rho'(\gamma)$ be the respective elements of $\mathrm{Isom}(E_{x_\star})$ obtained for this choice of points \eqref{equation:cross}. We observe that there exists $k_1,k_2 \in \Z$ such that $x_i = \varphi_{k_i T_\star}x_i'$, for $i=1,2$. This implies that
\[
\rho'(\gamma) = \rho(\gamma_\star)^{-k_2} \rho(\gamma) \rho(\gamma_\star)^{k_1} \in H,
\]
and this proves the claim.
\end{proof}

We conclude by the following important remark:

\begin{remark}
\label{remark:choice}
In practice, we will always choose an arbitrary isometry $\varphi_\star : F \to E_{x_\star}$ and then Parry's representation in Definition \ref{definition:representation} gets identified via $\varphi_\star$ to a representation $\rho : \mathbf{G} \to \mathrm{Isom}(F)$. Changing the isometry $\varphi_\star$ by $\varphi_\star'$, one gets another conjugate representation $\rho' : \mathbf{G} \to \mathrm{Isom}(F)$. The transitivity group $H$ can then be seen as a subgroup of $\mathrm{Isom}(F)$ but the same remark applies: changing the isometry $F \to E_{x_\star}$ by another one, one gets a conjugate group $H' \leqslant \mathrm{Isom}(F)$.
\end{remark}

\subsection{Invariant sections on vector bundles}

\label{ssection:invariant}

Let $\mc{E} \to M$ be a smooth real/complex vector bundle over $M$ equipped with a Euclidean/Hermitian metric. Let $\X : C^\infty(M,\mc{E}) \to C^\infty(M,\mc{E})$ be a differential operator acting as a derivation in the $X_M$-direction i.e. such that for all $f \in C^\infty(M), u \in C^\infty(M,\mc{E})$:
\begin{equation}
\label{equation:derivation}
\X(f u) = (X_Mf) u + f \X u.
\end{equation}
Such an operator defines a natural parallel transport of sections of $\E$ along the flowlines of $(\varphi_t)_{t \in \R}$. The propagator $e^{t \X} : C^\infty(M,\E) \to C^\infty(M,\E)$ is defined as follows: for $f \in C^\infty(M,\E)$, $x \in M$, $(e^{t\X}f)(x)$ is the parallel transport of $f(\varphi_{-t}x)$ along the segment $(\varphi_sx)_{s \in [-t,0]}$ with respect to $\X$.

In the following, we will furher assume that $e^{t\X}$ is a fiberwise isometry. Typical examples are provided by operators $\X := \nabla^{\E}_{X_M}$, where $\nabla^{\E}$ is an arbitrary unitary\footnote{That is, parallel transport with respect to $\nabla^{\E}$ preserves the norm on $\E$.} connection on $\E$. Since $X_M$ preserves a smooth measure $\mu_M$ by assumption, we can consider the $L^p(M,\dd\mu_M)$ spaces on $M$ for $p \in [1,+\infty]$ defined with respect to that measure and the spaces $L^p(M,\E)$, with norm given by
\[
\|u\|_{L^p(M,\E)}^p := \int_M |u(x)|^p_{\E_x} \dd \mu_M(x).
\]
We introduce for $\Re(z) > 0$ the positive and negative resolvents
\begin{equation}
\label{equation:resolvent}
\begin{split}
& \mathbf{R}_+(z) := (-\X-z)^{-1} = - \int_0^{+\infty} e^{-t(\X+z)} \dd t, \\
&  \mathbf{R}_-(z) := (\X-z)^{-1} := - \int_{0}^{+\infty} e^{-t(-\X+z)} \dd t.
\end{split}
\end{equation}
Since the propagator is unitary, they satisfy
\begin{equation}
\label{equation:bound}
\|\mathbf{R}_{\pm}(z)\|_{L^p \to L^p} \leq 1/\Re(z),
\end{equation}
for all $p \in [1,+\infty]$ and $z \in \C$ such that $\Re(z) > 0$. Moreover, we have
\begin{equation}
\label{equation:adjoint}
\mathbf{R}_+(z)^* = \mathbf{R}_-(\overline{z}),
\end{equation}
for $\Re(z) > 0$, where the adjoint is taken with respect to the $L^2$-scalar product, namely,
\[
\forall f_1,f_2 \in C^\infty(M,\E), \qquad \langle \mathbf{R}_+(z)f_1,f_2\rangle_{L^2} = \langle f_1, \mathbf{R}_-(\overline{z}) f_2 \rangle_{L^2}.
\]
This follows from the fact that $\X^*=-\X$ which, in turn, follows from the fact that $e^{t\X}$ is a fiberwise isometry. We refer to \cite[Section 2.5]{Cekic-Lefeuvre-21-1} for further details. The following regularity statement will be used several times in the rest of the paper.

\begin{proposition}
\label{proposition:regularity}
Let $u \in L^1(M,\mc{E})$ such that $\X u = i \lambda u$, for some $\lambda \in \R$. Then $u$ is smooth.
\end{proposition}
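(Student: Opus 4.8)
The plan is to realise $u$ as a Pollicott--Ruelle resonant state of \emph{both} resolvents in \eqref{equation:resolvent}, and then to invoke the microlocal regularity of resonant states. Set $z_0 := -i\lambda$. By the now-classical theory of anisotropic Sobolev spaces for Anosov flows (\cite{Faure-Sjostrand-11, Dyatlov-Zworski-16}, see also \cite[Section 2.5]{Cekic-Lefeuvre-21-1}), $\mathbf{R}_+(z)$ and $\mathbf{R}_-(z)$ continue from $\{\Re(z) > 0\}$ to meromorphic families of operators on suitable anisotropic spaces, with poles the Pollicott--Ruelle resonances; the residue of $\mathbf{R}_+$ at a pole $z_0$ is $-\Pi_{z_0}$, where $\Pi_{z_0}$ is a finite-rank spectral projector whose range consists of (generalized) resonant states, and these are distributions with wavefront set contained in $E_u^* := (\R X_M \oplus \mathbb{E}^u_M)^{\perp}$; symmetrically for $\mathbf{R}_-$ with $E_s^* := (\R X_M \oplus \mathbb{E}^s_M)^{\perp}$. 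Since $E_u^* \cap E_s^* = (\R X_M \oplus \mathbb{E}^u_M \oplus \mathbb{E}^s_M)^{\perp} = (TM)^{\perp} = 0$ inside $T^*M \setminus 0$, showing that $u$ is a resonant state of both $\mathbf{R}_+$ and $\mathbf{R}_-$ forces $\WF(u) = \emptyset$, i.e.\ $u \in C^\infty(M, \mc{E})$.

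To produce these two resonant states, I would first substitute the equation into \eqref{equation:resolvent}. Differentiating $t \mapsto e^{t\X} u$ and using $\X u = i\lambda u$ (with the propagator an isometry of $L^1(M, \mc{E})$) gives $e^{\pm t\X} u = e^{\pm i\lambda t} u$ for all $t \in \R$, so the integrals \eqref{equation:resolvent} converge absolutely in $L^1$ and
\[
\mathbf{R}_+(z)\, u = -\frac{1}{z + i\lambda}\, u, \qquad \mathbf{R}_-(z)\, u = -\frac{1}{z - i\lambda}\, u, \qquad \Re(z) > 0 .
\]
Comparing the meromorphic continuation of $z \mapsto \mathbf{R}_+(z) u$ with that of its explicit value $-u/(z - z_0)$ --- the two agree on $\{\Re(z) > 0\}$, hence everywhere --- and taking residues at $z_0$ yields $\Pi_{z_0} u = u$. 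As $u \neq 0$ (the case $u \equiv 0$ being trivial), $z_0$ is therefore a resonance of $\mathbf{R}_+$ and $u \in \operatorname{ran} \Pi_{z_0}$, whence $\WF(u) \subset E_u^*$; repeating the argument with $\mathbf{R}_-$ at $i\lambda$ gives $\WF(u) \subset E_s^*$, and the conclusion follows from the first paragraph.

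The delicate point --- and the step I expect to be the main obstacle --- is that this comparison of meromorphic continuations presupposes that $u$ belongs to the anisotropic space on which the continued $\mathbf{R}_\pm(z)$ are defined, whereas a priori $u$ is only an $L^1$-section; this is precisely the critical-line regularity issue for Pollicott--Ruelle resonances. I would resolve it by a free a priori regularity gain: since $e^{t\X}$ acts by fiberwise isometries, the function $x \mapsto |u(x)|_{\mc{E}_x}$ is $\varphi_t$-invariant, hence a.e.\ constant by ergodicity of the volume-preserving Anosov flow $(\varphi_t)_{t \in \R}$ (recalled in \S\ref{section:preliminaries}); thus either $u \equiv 0$ or $|u| \equiv c > 0$ and $u \in L^\infty(M, \mc{E}) \subset L^2(M, \mc{E})$. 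With this a priori control, the standard microlocal toolkit on the critical line $\{\Re(z) = 0\}$ --- elliptic regularity off the characteristic set $\Sigma = \{\xi(X_M) = 0\}$, Hörmander's propagation of singularities along the lifted flow, and the radial-point estimates at the radial sets $E_u^*$ and $E_s^*$ (\cite{Faure-Sjostrand-11, Dyatlov-Zworski-16}, see also \cite[Section 2.5]{Cekic-Lefeuvre-21-1}) --- makes the argument rigorous and identifies $u$ as a resonant state of $\mathbf{R}_+$ and of $\mathbf{R}_-$. Nothing above is specific to scalar functions: in a local unitary trivialisation $\sigma(\X) = \xi(X_M) \otimes \mathrm{Id}_{\mc{E}}$, so $\Sigma$, the radial sets, and all regularity thresholds reduce to those of the underlying Anosov flow.
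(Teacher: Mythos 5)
Your overall strategy --- realise $u$ as a resonant state of both $\mathbf{R}_+$ and $\mathbf{R}_-$ and conclude smoothness from the fact that $E_u^*\cap E_s^*$ is trivial --- is the right one, and is in substance the same as the paper's (which phrases the conclusion as $u\in\mc{H}^s_+\cap\mc{H}^s_-=H^s$ for every $s$). The explicit formula $\mathbf{R}_\pm(z)u=-u/(z\pm i\lambda)$ for $\Re(z)>0$ and the residue identity $\Pi_{z_0}u=u$ are also the correct intermediate targets. However, you have correctly located the hard point and then not actually crossed it. The obstruction is that neither $L^1$ nor $L^2$ (nor $L^\infty$) embeds into the anisotropic space $\mc{H}^s_+$ on which the continued resolvent acts: that space demands $H^s$ regularity microlocally near $E_s^*$ (and similarly $\mc{H}^s_-$ near $E_u^*$), which a general $L^2$ section does not possess. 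So ``comparing meromorphic continuations'' is not yet meaningful for $u$ itself, and the fallback you propose --- launching the high-regularity radial source estimate from the a priori information $u\in L^2$ --- fails for a quantitative reason: for a volume-preserving flow and a spectral parameter on the critical line $\Re(z)=0$, the threshold regularity required at the radial source is $s>0$, so $u\in H^0=L^2$ sits exactly at the borderline and the estimate cannot be initiated (one needs $H^{s_0}$ near the source for some $s_0$ strictly above threshold). This borderline case is precisely what the proposition is about. The paper bridges it by a limiting argument that never applies the continued resolvent to $u$: it approximates $u$ in $L^1$ by smooth $u_n$, uses only the bounds \eqref{equation:bound} valid in the physical half-plane (in particular the $L^\infty\to L^\infty$ bound paired against $u_n\in L^1$), the finite rank of $\Pi_\pm$, and the duality $\Pi_+^*=\Pi_-$ to show that $v_n:=\Pi_+u_n$ converges in the finite-dimensional space $\ker\X|_{\mc{H}^s_+}$ and that its limit equals $u$; this places $u$ in $\ran\Pi_+\subset\mc{H}^s_+$ (and symmetrically in $\mc{H}^s_-$). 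Some such approximation step is indispensable and is missing from your proposal.

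A secondary but real problem is your appeal to ergodicity of $(\varphi_t)_{t\in\R}$ to upgrade $|u|$ to a constant. In this paper, ergodicity of the volume-preserving Anosov flow is \emph{deduced} from Proposition \ref{proposition:regularity} in Appendix \ref{appendix}, so invoking it here is circular within the paper's own logic; you would have to import the classical Hopf-argument proof instead, which runs against the note's stated aim of skirting that argument. Note also that this upgrade only yields $u\in L^\infty\subset L^2$, which, as explained above, is still not enough; the paper's proof works directly with $u\in L^1$ and uses no ergodicity at all.
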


Proposition \ref{proposition:regularity} will be mostly used with $u \in L^2$. The proof is based on the microlocal theory of Pollicott-Ruelle resonances on anisotropic Sobolev spaces. 


\begin{proof}
It suffices to argue in the case $\lambda=0$ since it amounts to changing $\X$ by $\X-i\lambda$, which has the same analytic properties. By \cite{Faure-Sjostrand-11,Dyatlov-Zworski-16}, there exists a scale of \emph{anisotropic Sobolev spaces} $\mc{H}_\pm^s$ and a constant $C  >0$ such that $\mathbf{R}_\pm(z) : \mc{H}_\pm^s \to \mc{H}_\pm^s$ admits a meromorphic extension from $\left\{\Re(z) > 0 \right\}$ to $\left\{\Re(z)> -Cs\right\}$. These spaces can be designed so that $\mc{H}_+^s \cap \mc{H}_-^s = H^s(M,\mc{E})$, see \cite{Guillarmou-17-1} for instance.

The meromorphic family of operators $z \mapsto \mathbf{R}_+(z) \in \mc{L}(\mc{H}^s_+)$ may have a pole at $z=0$. If it is the case, this pole is necessarily of order $1$ by \eqref{equation:bound}. We denote by $\Pi_\pm$ the spectral projector onto $\ker \X|_{\mc{H}^s_\pm}$ given by
\[
\Pi_{\pm} = -\dfrac{1}{2 i \pi} \int_{\gamma} \mathbf{R}_{\pm}(z) \dd z
\]
where $\gamma$ is a small contour around $0$, see \cite[Section 2.5]{Cekic-Lefeuvre-21-1}. (Note that this may be $0$ if there is no pole at $z=0$.) As a consequence, $z \mathbf{R}_+(z) \to_{z \to 0} - \Pi_+$ in operator norm of $\mc{L}(\mc{H}^s_+)$ and we can write
\[
z  \mathbf{R}_+(z) = -\Pi_+ + z \mathbf{R}_+^{\mathrm{hol}}(z), \qquad z\mathbf{R}_-(z) = -\Pi_- + z\mathbf{R}_-^{\mathrm{hol}}(z)
\]
where $z \mapsto \mathbf{R}_\pm^{\mathrm{hol}}(z) \in \mc{L}(\mc{H}^s_\pm)$ is holomorphic near $z=0$. Observe that $\Pi_+^*=\Pi_-$ (where the adjoint is taken with respect to the $L^2$-scalar product as before) because
\[
\Pi_+^* = +\dfrac{1}{2 i \pi} \int_{\gamma} \mathbf{R}_{+}(z)^* \dd z = +\dfrac{1}{2 i \pi} \int_{\gamma} \mathbf{R}_{-}(\overline{z}) \dd z = -\dfrac{1}{2i\pi} \int_{\gamma}  \mathbf{R}_{-}(z) \dd z = \Pi_-.
\]
Note that we used here that the relation \eqref{equation:adjoint} holds for all $z \in \C$ not being a pole of $\mathbf{R}_\pm$.

We now let $u \in L^1(M,\E)$ such that $\X u = 0$. Consider a sequence $(u_n)_{n \in \Z_{\geq 0}}$ such that each $u_n$ is smooth and $u_n \to u$ in $L^1(M,\E)$. Define $v_n := \Pi_+ u_n \in \ker \X|_{\mc{H}^s_+}$. We claim that $(\|v_n\|_{\mc{H}^s_+})_{n \in \Z_{\geq 0}}$ is bounded. We argue by contradiction. If it is unbounded, then up to a subsequence, we can assume that $\|v_n\|_{\mc{H}^s_+} \to \infty$. Since $\ker \X|_{\mc{H}^s_+}$ is a finite-dimensional vector space, up to another extraction, we then get that $v_n/\|v_n\|_{\mc{H}^s_+} \to v_\infty$ in $\mc{H}^s_+$, for some $v_\infty \in \mc{H}^s_+$ and $\|v_\infty\|_{\mc{H}^s_+} = 1$.

We will now take $z > 0$ to be a positive real number. Taking a smooth test function $\psi \in C^\infty(M,\E)$, we then get for $z > 0,n \in \Z_{\geq 0}$:
\[
\begin{split}
\langle v_\infty, \psi \rangle & = \langle v_\infty - \tfrac{v_n}{\|v_n\|_{\mc{H}^s_+}} , \psi\rangle + \tfrac{1}{\|v_n\|_{\mc{H}^s_+}} \langle \Pi_+ u_n, \psi \rangle \\
& = \langle v_\infty - \tfrac{v_n}{\|v_n\|_{\mc{H}^s_+}}, \psi \rangle + \tfrac{1}{\|v_n\|_{\mc{H}^s_+}} \langle u_n, \Pi_- \psi \rangle \\
&  =  \langle v_\infty- \tfrac{v_n}{\|v_n\|_{\mc{H}^s_+}}, \psi \rangle + \tfrac{1}{\|v_n\|_{\mc{H}^s_+}} \langle u_n, (-z\mathbf{R}_-(z) + z\mathbf{R}^{\mathrm{hol}}_-(z)) \psi \rangle.
\end{split}
\]
Observe that using \eqref{equation:bound}, we get for $z > 0$ and $n$ large enough:
\[
|\langle u_n ,-z \mathbf{R}_-(z) \psi \rangle| \leq \|u_n\|_{L^1} \|z \mathbf{R}_-(z) \psi \|_{L^\infty} \leq 2 \|u\|_{L^1} \|\psi \|_{L^\infty}.
\]
We fix $\eps > 0$. Taking $n$ large enough, we can ensure that
\[
|\langle v_\infty - \tfrac{v_n}{\|v_n\|_{\mc{H}^s_+}}, \psi \rangle| \leq \eps, ~~~~ \|v_n\|^{-1}_{\mc{H}^s_+} \leq \eps,
\]
(the first inequality follows from convergence of $v_n/\|v_n\|_{\mc{H}^s_+} \to v_\infty$ as distributions while the second follows from $\|v_n\|_{\mc{H}^s_+} \to \infty$), which implies that 
\[
|\langle v_\infty, \psi \rangle| \leq C \eps + \left|\tfrac{z}{\|v_n\|_{\mc{H}^s_+}} \langle u_n, \mathbf{R}^{\mathrm{hol}}_-(z) \psi \rangle \right|,
\]
where $C > 0$ is independent of $n$. Then, taking $z > 0$ small enough (depending on $\eps > 0$ and $n$), using that $\|\mathbf{R}^{\mathrm{hol}}_-(z)\|_{\mc{L}(\mc{H}^s_-)} \leq C$ is uniformly bounded by a constant $C > 0$ for $z$ close to $0$, we get $\langle v_\infty, \psi \rangle \leq C \eps$, which implies that $v_\infty = 0$. But this contradicts $\|v_\infty\|_{\mc{H}^s_+} = 1$. We thus conclude that $(\|v_n\|_{\mc{H}^s_+})_{n \in \Z_{\geq 0}}$ is bounded.

Once again, using that $\ker \X|_{\mc{H}^s_+}$ is a finite-dimensional space, we obtain up to a subsequence that $v_n \to v_\infty$ in $\mc{H}^s_+$. We claim that $u = v_\infty$. Indeed, fix $\eps > 0$. Using that for $z > 0$, $-z \mathbf{R}_+(z) u = u$ we get that for all test functions $\psi \in C^\infty(M,\E)$:
\[
\begin{split}
\langle u, \psi \rangle_{L^2(M,\E)}  &= \langle -z \mathbf{R}_+(z) u, \psi \rangle_{L^2(M,\E)} \\
& = \langle -z\mathbf{R}_+(z) (u-u_n), \psi \rangle + \langle -z\mathbf{R}_+(z) u_n - v_n, \psi \rangle + \langle v_n-v_\infty,\psi \rangle + \langle v_\infty, \psi \rangle.
\end{split}
\]
Observe that by \eqref{equation:bound}, we have
\[
|\langle -z\mathbf{R}_+(z) (u-u_n), \psi \rangle| \leq \|u-u_n\|_{L^1}\|\psi\|_{L^\infty} \leq \eps,
\]
for $n$ large enough, $z > 0$ and also $|\langle v_n-v_\infty,\psi \rangle| \leq \eps$ as $v_n \to v_\infty$ in $\mc{D}'(M)$. Moreover
\[
|\langle -z\mathbf{R}_+(z) u_n - v_n, \psi \rangle| \leq C \|-z \mathbf{R}_+(z)-\Pi_+\|_{\mc{L}(\mc{H}^s_+)} \|u_n\|_{\mc{H}^s_+} \|\psi\|_{(\mc{H}^s_+)'} \leq \eps,
\]
for $z > 0$ small enough (depending on $n$), since $-z \mathbf{R}_+(z)-\Pi_+ = -z\mathbf{R}_+^{\mathrm{hol}}(z)$ is small in the operator norm of $\mc{L}(\mc{H}^s_+)$. (Here $(\mc{H}^s_+)'$ is the dual space to $\mc{H}^s_+$.) This implies that $\langle u, \psi \rangle = \langle v_\infty,\psi \rangle$ for all $\psi \in C^\infty(M,\E)$, that is, $u = v_\infty \in \mc{H}^s_+$. The same argument applies with the negative resolvent $\mathbf{R}_-(z)$ and shows that $u \in \mc{H}^s_-$. Since $\mc{H}^s_+ \cap \mc{H}^s_- = H^s$, we get $u \in H^s$ and since $s > 0$ is arbitrary, we get that $u$ is smooth.
\end{proof}

As in \S\ref{ssection:lift} and Definition \ref{definition:representation}, there is a natural unitary representation
\[
\rho : \mathbf{G} \to \mathrm{U}(\E_{x_\star})
\]
obtained by taking parallel transport along homoclinic orbits with respect to $\X$. Let
\[
\E^\rho_{x_\star} := \left\{f \in \E_{x_\star} ~|~  \rho(g)f = f, \forall g \in \mathbf{G}\right\},
\]
be the set of vectors that are invariant by the representation. We have the following, see \cite[Theorem 3.5]{Cekic-Lefeuvre-21-1} and \cite[Lemma 3.6]{Cekic-Lefeuvre-21-1}:

\begin{proposition}
\label{proposition:isomorphism}
The evaluation map
\[
\mathrm{ev}_{\star} : \ker \X \cap C^{\infty}(M,\E) \to \E^\rho_{x_\star}, ~~~~ u \mapsto u(x_\star)
\]
is an isomorphism. In other words, there is a one-to-one correspondance between elements fixed by the representation and smooth flow-invariant sections.
\end{proposition}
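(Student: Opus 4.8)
The statement to prove is Proposition \ref{proposition:isomorphism}: the evaluation map $\mathrm{ev}_\star : \ker\X \cap C^\infty(M,\E) \to \E^\rho_{x_\star}$ is an isomorphism.

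\textbf{Proof plan.}

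The plan is to construct an inverse map and check both composition directions, the key analytic input being that flow-invariant $L^1$ sections are automatically smooth (Proposition \ref{proposition:regularity}) together with the density of homoclinic orbits (Lemma \ref{lemma:density}).

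First I would check that $\mathrm{ev}_\star$ is well-defined and injective. If $u \in \ker\X \cap C^\infty(M,\E)$, then $u$ is invariant under the propagator $e^{t\X}$, hence under all three types of holonomy (central holonomy is literally $e^{t\X}$ along the orbit; stable and unstable holonomies are limits of compositions of $\Phi_t$-conjugated parallel transports as in \eqref{equation:holonomy}, and since $u$ is continuous and flow-invariant it is preserved in the limit — here one uses that parallel transport of the value $u$ along the short geodesics is compatible with $u$ being a section, i.e. $\tau_{x\to y}u(x) \to u(y)$ in the limit). Consequently $u(x_\star)$ is fixed by $\rho(\gamma)$ for every generator $\gamma \in \mathcal{H}$, so $u(x_\star) \in \E^\rho_{x_\star}$. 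For injectivity, suppose $u(x_\star) = 0$. By flow-invariance $u$ vanishes on the orbit of $x_\star$; by invariance under stable and unstable holonomies $u$ vanishes along $W^s_M(x_\star)$ and $W^u_M(x_\star)$ and, iterating holonomies, along every homoclinic orbit. Since homoclinic orbits are dense (Lemma \ref{lemma:density}) and $u$ is continuous, $u \equiv 0$.

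Next I would prove surjectivity. Given $f \in \E^\rho_{x_\star}$, define a section $u$ by transporting $f$ along the dynamics: for a point $y$ in a homoclinic orbit $\gamma$, set $u(y)$ to be the image of $f$ under the appropriate composition of holonomies from $x_\star$ to $y$; the invariance $\rho(g)f = f$ for all $g \in \mathbf{G}$ is exactly what is needed for this to be well-defined independently of the path of holonomies chosen (different paths differ by an element of $\rho(\mathbf{G})$, and also by powers of $\rho(\gamma_\star)$ as in the proof of Definition \ref{definition:transitivity}, all of which fix $f$). This defines $u$ on the dense set of homoclinic orbits; one checks it is (locally) bounded there, so it extends to an $L^\infty \subset L^1$ section. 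By construction $u$ is invariant under the propagator on this dense set, hence $\X u = 0$ in the distributional sense. Now Proposition \ref{proposition:regularity} (applied with $\lambda = 0$) upgrades $u$ to a smooth section, and by continuity the flow-invariance and holonomy-invariance persist everywhere; in particular $\mathrm{ev}_\star(u) = u(x_\star) = f$. Finally, the two maps are mutually inverse: $\mathrm{ev}_\star$ followed by the reconstruction returns $u$ (a smooth flow-invariant section is determined by its value at $x_\star$ via holonomies, by the density argument of the injectivity step), and the reconstruction followed by $\mathrm{ev}_\star$ is the identity on $\E^\rho_{x_\star}$ by construction.

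\textbf{Main obstacle.} The delicate point is the surjectivity step: showing that the section $u$ built by ad hoc holonomy transport along homoclinic orbits is genuinely well-defined (this is where the full monoid invariance $\rho(\mathbf{G})f = f$ is used, not merely invariance under individual generators), that it is bounded and measurable so as to lie in $L^1$, and that it satisfies $\X u = 0$ distributionally rather than merely pointwise on a dense set. Once these are in place, Proposition \ref{proposition:regularity} does the rest of the work. The reference \cite[Theorems 3.5 and 3.6]{Cekic-Lefeuvre-21-1} carries out precisely this argument in detail.
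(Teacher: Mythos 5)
Your proposal is correct and, for the substantive half of the statement (surjectivity), follows exactly the route the paper takes: build the section by pushing $u_\star$ along homoclinic orbits via the holonomies, use the full monoid invariance $\rho(\mathbf{G})f=f$ to see the construction is path-independent, and then bootstrap regularity with Proposition \ref{proposition:regularity}; the paper likewise only sketches this and defers the details to \cite[Lemma 3.6]{Cekic-Lefeuvre-21-1}. Where you diverge is injectivity: you argue via holonomy-invariance of $u$, vanishing along all homoclinic orbits, and the density Lemma \ref{lemma:density}, whereas the paper simply notes that $X_M|u|^2=0$, so $|u|$ is constant by ergodicity of $(\varphi_t)_{t\in\R}$ on the base, and hence $u(x_\star)=0$ forces $u\equiv 0$. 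Both are valid; the paper's version is shorter and sidesteps having to justify that $\X u=0$ implies invariance under the stable/unstable holonomies (which you do need anyway for well-definedness of $\mathrm{ev}_\star$), while yours avoids invoking ergodicity of the base flow. One small caution on your surjectivity sketch: homoclinic orbits form a set of measure zero, so local boundedness of the hand-built section on that dense set is not by itself enough to produce a measurable $L^\infty\subset L^1$ extension — one needs some uniform modulus of continuity there (the cited reference establishes Lipschitz continuity, which is what the paper's sketch records). Since you explicitly flag this as the delicate point and cite the same reference the paper does, this matches the paper's own level of detail rather than constituting a gap.
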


\begin{proof}[Idea of proof]
It is straightforward to check that the map is well-defined. Injectivity is also easy to obtain since, if $u \in \ker \X \cap C^{\infty}(M,\E)$, one has $X_M |u|^2 = 0$ and thus $|u|$ is constant by ergodicity of the flow $(\varphi_t)_{t \in \R}$ (see Appendix \ref{appendix} for instance). Hence, if $\mathrm{ev}_\star(u) = u(x_\star) = 0$, we deduce that $u=0$. Surjectivity is less easy to obtain and we refer to \cite[Lemma 3.6]{Cekic-Lefeuvre-21-1} for a detailed proof. The idea is that, given $u_\star \in \mc{E}^\rho_{x_\star}$, one can construct by hand a Lipschitz-continuous section $u$ on $M$ such that $u(x_\star)=u_\star$ (by ``pushing'' $u_\star$ by the flow along homoclinic orbits). Using Proposition \ref{proposition:regularity}, we then bootstrap to a smooth section.
\end{proof}

\subsection{Isometric actions on closed manifolds}

\label{ssection:actions}

The group $H$ acts on the fiber $E_{x_\star} \simeq F$ (endowed with the smooth metric $g_F$) by isometries so it falls into the realm of isometric group actions, see \cite{Duistermaat-Kolk-00, Alexandrino-Bettiol-15} for a review on this topic. Note that the action by isometries is on the left. The quotient space $H \backslash F$ is in general not a smooth manifold but it is a topological Hausdorff space, due to the properness of the action (this is always the case for compact Lie groups acting on closed manifolds). The topology on the quotient space is the standard one making the projection $\mathrm{pr} : F \to H \backslash F$ continuous. Given $p \in F$, we define $H \cdot p$ to be the orbit of $p$. This is a smooth embedded submanifold in $F$. We let $H_p$ be the isotropy group of $p$, namely the closed subgroup of elements of $H$ fixing $p$. Obviously, the subgroup $H_p$ is constant modulo conjugacy in $H$ along the orbit $H \cdot p$ but it may vary as the point $p$ moves tranversaly to the orbits of $H$. Given $p \in F$, we let $N_p \subset T_p F$ be the normal vector space at $p$ to the orbit $H \cdot p$ (with respect to the metric $g_F$):

\begin{definition}[Slice]
\label{definition:slice}
A \emph{slice} at $p \in F$ is an embedded submanifold $S_p$ in $F$ containing $p$ such that
\begin{enumerate}
\item $T_p F = T_p (H \cdot p) \oplus T_p S_p$ and for all $x \in S_p$, $T_x F = T_x (H \cdot x) + T_x S_p$, 
\item $S_p$ is invariant by $H_p$, that is, for all $h \in H_p$, $h S_p = S_p$,
\item If $h \in H, x \in S_p$ and $h x \in S_p$, then $h \in H_p$.
\end{enumerate}
In particular, $H_x \leqslant H_p$ for all $x \in S_p$ by (3).
\end{definition}

For $\eps > 0$ small enough, let $N^{< \eps}$ be the normal bundle over the orbit $H \cdot p$, whose fiber at $x \in H \cdot p$ is given by $\left\{ \xi \in N_x ~|~ |\xi| < \eps \right\}$, where $N_x$ is the orthogonal at $x$ to the tangent space of the $H \cdot p$ orbit. There is a well-defined $H$-action on $N^{< \eps}$ given by $h(x,\xi) := (hx,dh(\xi))$, where $h \in H, x \in H \cdot p, \xi \in N_x^{< \eps}$ (it is well-defined since $H$ acts by isometries).
The following slice Theorem (see \cite[Theorem 2.4.1]{Duistermaat-Kolk-00}) plays an important role in the description of the orbit space of the $H$-action.

\begin{theorem}
\label{theorem:slice}
For $\eps > 0$ small enough, the image
\begin{equation}
\label{equation:exp}
S_p := \left\{\exp_p(\xi) ~|~ \xi \in N_p, |\xi| < \eps\right\}
\end{equation}
under the exponential map is a slice at $p$ for the $H$-action. Letting $~\mc{U} := H \cdot S_p$ be an $H$-invariant tubular neighborhood of $H \cdot p$, the exponential map
\begin{equation}
\label{equation:exp}
\exp : N^{< \eps} \to  \mc{U}, ~~~ (x,\xi) \mapsto \exp_x(\xi),
\end{equation}
is an $H$-equivariant diffeomorphism, that is, 
\begin{equation}
\label{equation:slice2}
\forall h \in H, x \in H\cdot p, \xi \in N_x^{< \eps}, \qquad  h \exp_x(\xi) = \exp_{hx}(dh_x \xi).
\end{equation}
\end{theorem}

There is a natural \emph{slice representation} (or isotropy representation) defined as
\begin{equation}
\label{equation:slice-rep}
\rho_p : H_p \to \mathrm{O}(N_p), ~~ \rho_p(h) := dh_p,
\end{equation}
mapping $h$ to its differential at the point $p$. Moreover, as a particular case of \eqref{equation:slice2}, the exponential map intertwines the $H_p$-action on $S_p$ with the action on $N_p$ via $\rho_p$, i.e. $h\exp_p(\xi)=\exp_p(\rho_p(h)\xi)$. By the slice Theorem \ref{theorem:slice}, any $H$-invariant object defined on $\mc{U}$ (such as functions, sets, measures, etc.) corresponds bijectively via $\exp$ to $H_p$-invariant objects on $N^{< \eps}$.

We have the following Fubini-type formula:

\begin{lemma}
\label{lemma:fubini}
Let $\mu_F$ be the smooth $H$-invariant Riemannian measure on $F$. Given $f \in C^\infty_{\comp}(\mc{U})$, we have
\begin{equation}
\label{equation:fubini}
\int_{\mc{U}} f(z) \dd \mu_F(z) = \int_{H \cdot p} \left( \int_{N_x^{< \eps}} (\exp^*f)(x,\xi) \nu(x,\xi) \dd \omega_{N_x}(\xi) \right) \dd \mu_{H \cdot p}(x), 
\end{equation}
where $\dd \omega_{N_x}$ is the Euclidean measure on $N_x^{< \eps}$, $\nu$ is some smooth $H$-invariant positive function on $\mc{U}$, and $\dd \mu_{H \cdot p}$ is the Riemannian measure on the submanifold $H \cdot p$ (induced by the metric on $F$). In particular, if $f$ is $H$-invariant, then
\begin{equation}
\label{equation:fubini2}
\int_{\mc{U}} f(z) \dd \mu_F(z) = \vol(H \cdot p) \int_{N_p^{< \eps}} (\exp^*f)|_{N_p}(\xi) \nu(p,\xi) \dd \omega_{N_p}(\xi).
\end{equation}
This also holds for measurable functions.
\end{lemma}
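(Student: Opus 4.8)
The plan is to unwind the slice Theorem~\ref{theorem:slice} and then apply the coarea formula to the Riemannian submersion $\pi : \mc{U} \to H \cdot p$ constructed above. First I would fix $\eps > 0$ small enough that Theorem~\ref{theorem:slice} applies, so that $\exp : N^{<\eps} \to \mc{U}$ is an $H$-equivariant diffeomorphism and, for every $x \in H \cdot p$, the restriction $\exp_x : N^{<\eps}_x \to S_x$ is a diffeomorphism onto the slice $S_x = \pi^{-1}(x)$ (so that $\mc{U} = \bigsqcup_{x \in H \cdot p} S_x$). Since $\pi$ is a Riemannian submersion whose fibres are the slices $S_x$ equipped with the induced Riemannian measure $\mu_{S_x}$, the coarea formula gives, for $f \in C^\infty_{\comp}(\mc{U})$,
\[
\int_{\mc{U}} f(z)\, \dd\mu_F(z) = \int_{H \cdot p} \left( \int_{S_x} f(z)\, \dd\mu_{S_x}(z) \right) \dd\mu_{H \cdot p}(x).
\]
Pulling back each inner integral through the diffeomorphism $\exp_x$ and setting $\nu(x,\xi) := |\det \dd(\exp_x)_\xi|$, the Jacobian of $\exp_x$ computed with respect to the Euclidean structure on $N_x$ and the metric induced on $S_x$, would turn this into \eqref{equation:fubini}, with $\dd\omega_{N_x}$ the Euclidean measure on $N_x$. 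Here $\nu$ is smooth because $\exp$ and the metric $g_F$ are smooth, and positive because $\exp_x$ is a diffeomorphism, so $\dd(\exp_x)_\xi$ is invertible for every $\xi \in N^{<\eps}_x$, and $\nu(x,0)=1$.

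Next I would check the $H$-invariance of $\nu$. Given $h \in H$ and $x \in H \cdot p$, the map $h$ is an isometry of $(F,g_F)$ with $h(H \cdot p) = H \cdot p$ and $h(S_x) = S_{hx}$, hence its differential restricts to a linear isometry $\dd h_x : N_x \to N_{hx}$, and one has the intertwining identity $\exp_{hx} \circ \dd h_x = h \circ \exp_x$ on $N^{<\eps}_x$. Differentiating at $\xi$ and taking determinants yields $\nu(hx, \dd h_x \xi) = \nu(x,\xi)$, i.e. $\nu$ is $H$-invariant as a function on $N^{<\eps}$, equivalently on $\mc{U}$ via $\exp$.

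For the second formula I would specialise to an $H$-invariant $f$. For each $h \in H$ the restriction $h : S_x \to S_{hx}$ is a Riemannian isometry, so the change-of-variables formula together with $f \circ h = f$ gives $\int_{S_{hx}} f\, \dd\mu_{S_{hx}} = \int_{S_x} f\, \dd\mu_{S_x}$; since $H$ acts transitively on $H \cdot p$, the inner integral in \eqref{equation:fubini} is independent of $x$ and equals its value at $p$, namely $\int_{N_p} (\exp^*f)|_{N_p}(\xi)\, \nu(p,\xi)\, \dd\omega_{N_p}(\xi)$, and integrating this constant over $H \cdot p$ produces the factor $\mu_{H \cdot p}(H \cdot p) = \vol(H \cdot p)$, which is \eqref{equation:fubini2}. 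Finally I would extend \eqref{equation:fubini} to nonnegative measurable functions by a monotone class argument followed by monotone convergence, and hence to $\mu_F$-integrable functions by linearity. I do not expect a serious obstacle: the whole argument is a bookkeeping exercise based on Theorem~\ref{theorem:slice}, and the only point genuinely requiring attention is the normalisation of the Jacobian factor — the positivity and especially the $H$-invariance of $\nu$ — which is forced by the slice theorem together with the fact that $H$ acts by isometries.
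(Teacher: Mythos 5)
Your proof is correct and reaches the paper's conclusion by a slightly different mechanism. The paper produces $\nu$ abstractly: it introduces the $H$-invariant reference density $\mu_0 := |\omega_{N_x}\wedge\pi^*\omega_{H\cdot p}|$ on $N^{<\eps}$, for which the iterated-integral formula \eqref{equation:fubini} is immediate (Fubini for a product-type density on the fibration $N^{<\eps}\to H\cdot p$), and then sets $\nu := \exp^*\mu_F/\mu_0$, so that the $H$-invariance of $\nu$ comes for free as a ratio of two $H$-invariant densities. You instead disintegrate $\mu_F$ intrinsically via the coarea formula for the Riemannian submersion $\pi:\mc{U}\to H\cdot p$ (legitimate, since the coarea Jacobian of a Riemannian submersion is identically $1$ and the fibres are exactly the slices $S_x$), transfer each slice integral to $N_x$ through $\exp_x$, and identify $\nu$ concretely as the Jacobian $|\det \dd(\exp_x)_\xi|$; the price is that $H$-invariance must then be checked by hand, which you do correctly from the naturality $\exp_{hx}\circ \dd h_x = h\circ\exp_x$ of the exponential map under the isometry $h$ together with the fact that $\dd h$ restricts to linear isometries $N_x\to N_{hx}$ and $T S_x \to T S_{hx}$. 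The two constructions necessarily yield the same $\nu$, since the disintegration of $\exp^*\mu_F$ against $\dd\omega_{N_x}\,\dd\mu_{H\cdot p}$ is unique; your approach has the mild advantage of making $\nu$ explicit (in particular $\nu(x,0)=1$), while the paper's makes its invariance and smoothness tautological. Your derivation of \eqref{equation:fubini2} — constancy of the inner integral in $x$ by transitivity of $H$ on the orbit and $H$-invariance of $f$ — is the same computation as the paper's, phrased on the slices rather than on the normal spaces, and the extension to measurable functions by monotone classes is routine.
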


\begin{proof}
Using the exponential map \eqref{equation:exp}, we can write for $(x,\xi) \in N^{< \eps}$, $\exp^* \mu_F(x,\xi) = \nu(x,\xi) \dd \omega_{N_x}(\xi) \otimes \dd \mu_{H\cdot p}(x)$ for some smooth positive function $\nu \in C^\infty(N^{< \eps})$. Using, the $H$-equivariance of $\exp$ in \eqref{equation:slice2} and the $H$-invariance of both $\omega_{N_x}$ and $\mu_{H \cdot p}$, we easily get that $\nu$ is $H$-invariant. This proves \eqref{equation:fubini}.

For \eqref{equation:fubini2}, if $f$ is $H$-invariant, $x \in H \cdot p$ and $h \cdot p = x$, then we get, using $h_*(\nu(p,\bullet) \omega_{N_p}) = \nu(x,\bullet) \omega_{N_x}$ and $h_*(\exp^*f)|_{N_p} = (\exp^*f)|_{N_x}$, that:
\[
\int_{N_x} (\exp^*f)(x,\xi) \nu(x,\xi) \dd \omega_{N_x}(\xi) = \int_{N_p} (\exp^*f)(p,\xi) \nu(p,\xi) \dd \omega_{N_p}(\xi).
\]

\end{proof}

As a consequence of Lemma \ref{lemma:fubini}, we get that a measurable $H$-invariant set $A \subset \mc{U}$ has full measure in $\mc{U}$ if and only if the $H_p$-invariant set $A \cap N_p^{< \eps}$ has full measure in $N_p^{< \eps}$. \\

We will say that two points $x,y \in F$ are on the same \emph{orbit type} if $H_x$ is conjugate to $H_y$ (within $H$). By definition, a point $x \in F$ is said to be \emph{principal} (or $H$-principal) if all points $y$ in its neighborhood are on the same orbit type as $x$. There exists an open dense ($H$-invariant) set of points $F_0 \subset F$ with same orbit type, called the \emph{principal orbit type}, see \cite[Chapter 2, Section 8]{Duistermaat-Kolk-00}. In the following, we will refer to these as \emph{principal orbits} and points on the orbits will be called \emph{principal points}. Moreover, it is straightforward to check that $p \in F_0$ if and only if the slice representation $\rho_p$ is trivial or, equivalently, for all $x \in S_p$, $H_x = H_p$. When $F$ is compact, there is only a finite number of orbit types. We will eventually need the following:

\begin{lemma}
\label{lemma:measure}
The set of principal points $F_0 \subset F$ has full Lebesgue measure.
\end{lemma}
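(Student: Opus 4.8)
The plan is to argue by induction on $\dim F$, localising the statement around each $H$-orbit via the slice Theorem~\ref{theorem:slice} and the Fubini formula of Lemma~\ref{lemma:fubini}, and then reducing the localised statement to the very same statement for a compact Lie group acting isometrically on a closed manifold of strictly smaller dimension, namely the unit sphere of a slice. Concretely: since $F$ is compact, cover it by finitely many $H$-invariant tubular neighbourhoods $\mc{U}_i = H\cdot S_{p_i}$ provided by Theorem~\ref{theorem:slice}. As $F^{\mathrm{princ}}$ is open and $H$-invariant, it suffices to show that $F^{\mathrm{princ}}\cap\mc{U}_i$ has full measure in $\mc{U}_i$ for each $i$, and by the consequence of Lemma~\ref{lemma:fubini} stated just after it this is equivalent to showing that $\{\xi\in N_{p_i}^{<\eps}\mid \exp_{p_i}(\xi)\in F^{\mathrm{princ}}\}$ has full (Lebesgue) measure in the Euclidean ball $N_{p_i}^{<\eps}$.

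Fix such a point $p:=p_i$, put $K:=H_p$, and let $\rho_p:K\to\mathrm{O}(N_p)$ be the slice representation. Under the $H$-equivariant identification $\mc{U}_i\cong N^{<\eps}$ of Theorem~\ref{theorem:slice}, a point near $\exp_p(\xi)$ has the form $h\cdot\exp_p(\xi')$ with $\xi'$ close to $\xi$, and by the third slice axiom its $H$-isotropy is $hK_{\xi'}h^{-1}$, where $K_{\xi'}$ is the isotropy of $\xi'$ for the linear $K$-action on $N_p$. Since $K$-conjugacy of subgroups forces $H$-conjugacy, it follows that if $\xi$ is a \emph{principal} point of the linear $K$-action on $N_p^{<\eps}$ then every point near $\exp_p(\xi)$ has $H$-isotropy $H$-conjugate to $K_\xi=H_{\exp_p(\xi)}$, so $\exp_p(\xi)\in F^{\mathrm{princ}}$. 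Thus it is enough to prove that the non-principal set of the linear $K$-action on $N_p^{<\eps}$ is Lebesgue-negligible. Here linearity helps: $K_{t\xi}=K_\xi$ for $t>0$, so the principal set is a cone, and identifying $N_p\setminus\{0\}$ with $(0,\infty)\times S(N_p)$ (with $K$ acting trivially on the first factor) one sees that a nonzero $\xi$ is $K$-principal exactly when $\xi/|\xi|$ is principal for the isometric $K$-action on the round sphere $S(N_p)$. Writing the Euclidean measure in polar coordinates, the non-principal set of $N_p^{<\eps}$ is therefore the union of $\{0\}$ and the cone over the non-principal set of $S(N_p)$; since $S(N_p)$ is a closed manifold of dimension $\dim N_p-1\leq\dim F-1<\dim F$, this last set is negligible by the inductive hypothesis, and we are done.

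It then remains to run the induction: the base case $\dim F=0$ is immediate since $F$ is then finite and every point is principal, and the degenerate cases $\dim N_p\in\{0,1\}$ (when $\mc{U}_i$ is a single orbit, or $S(N_p)$ is a finite set) are covered the same way, every point of a $0$-dimensional manifold being principal. The step I expect to require the most care is the bookkeeping in the second paragraph: one must keep the distinction between $K$-conjugacy and $H$-conjugacy of isotropy subgroups in mind throughout and check that passing from the slice $S_p$ to the ambient manifold $F$ cannot destroy the principality of $\exp_p(\xi)$ — but this is precisely what the slice axiom $H_{\exp_p(\xi)}=K_\xi\leq K$ together with the $H$-invariance of the orbit-type stratification guarantee. (Alternatively, using that a compact $F$ has only finitely many orbit types, one can avoid the induction by showing that each non-principal stratum is an embedded submanifold of positive codimension — within a slice the points of top orbit type fill only the proper fixed subspace of the nontrivial slice representation — and hence Lebesgue-negligible.)
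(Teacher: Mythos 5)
Your proof is correct and follows essentially the same route as the paper's: induction on $\dim F$, localisation via the slice theorem and the Fubini formula of Lemma~\ref{lemma:fubini}, and reduction to the isotropy action on the unit sphere of the slice. The only cosmetic difference is that you justify the radial step by linearity of the slice representation ($K_{t\xi}=K_\xi$ for $t>0$), where the paper invokes Kleiner's lemma; both yield the same cone structure of the principal set.
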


\begin{proof}
We argue by induction on the dimension of the space $F$. First of all, if $\dim F = 1$, then $F=\Ss^1$ and the effective isometric action is given by $H=\Ss^1$ or by some finite cyclic $H$. In both cases the statement is trivial.

We now consider a smooth closed manifold $F$ of dimension $> 1$. Let $p \in F$ be an arbitrary point, $S_p$ an exponential slice at $p$  as in \eqref{equation:exp} and $\mc{U} := H \cdot S_p$ a small $H$-invariant tubular neighborhood of $H \cdot p$. It suffices to show that $F_0 \cap \mc{U}$ has full measure since we can cover $F$ by a finite number of such $H$-invariant open neighborhoods. By Lemma \ref{lemma:fubini}, it suffices to show that $F_0 \cap N_p^{< \eps}$ has full measure in $N_p^{< \eps}$.

We can now consider the isometric action of $H_p$ on $\Ss_p^{\eps} := \left\{ \xi \in N_p ~|~ |\xi|=\eps\right\}$.
Observe that it suffices to show that principal points for the $H_p$-action on $\Ss_p^{\eps}$ have full Lebesgue measure. Indeed, if $\xi \in N_p^{\leq \eps}$ is principal for the $H_p$-action, then it is in particular principal for the $H$-action and so is the $H_p$-orbit of $\xi$. In other words, $\xi$ corresponds to a principal orbit in $\mc{U}$. Moreover, by Kleiner's lemma (see \cite[Lemma 3.70]{Alexandrino-Bettiol-15}), the half-line $\left\{t \xi ~|~ t \in (0,1]\right\}$ also consists of $H$-principal points. Thus, if $H_p$-principal points have full measure on $\Ss_p^{\eps}$, then $F_0 \cap N_p^{< \eps}$ has full measure in $N_p^{< \eps}$. In order to conclude, it then suffices to apply the induction assumption since $\dim \Ss_p^{\eps} < \dim F$.
\end{proof}

We conclude this paragraph with a word on the quotient space near principal points. As mentioned earlier, $H \backslash F$ is a Hausdorff topological space, the projection $\mathrm{pr} : F \to H \backslash F$ is continuous but not necessarily smooth. Nevertheless, $\mathrm{pr} : F_0 \to H \backslash F_0$ is smooth in restriction to principal points and it is (tautologically) a Riemannian submersion if we equip the quotient space with the metric $g_{H \backslash F_0}$ such that $g_{H \backslash F_0}(d(\mathrm{pr})(Z),d(\mathrm{pr})(Z)) = g_F(Z,Z)$, for $Z \in TF$. With some slight abuse of notation, we will write $\mu_{H \backslash F}$ for the smooth Riemannian measure on $H \backslash F_0$ (we drop the subscript $0$ in the measure in order to avoid repetition). A point $H \cdot p$ on $H \backslash F$ will also be written $\mathfrak{p}$. Observe that for $f \in C^0(F)$, one has
\begin{equation}
\label{equation:int}
\int_F f \dd \mu_F = \int_{H \backslash F} \left( \int_{H \cdot p}  f(\mathfrak{p},x) \dd \mu_{H \cdot p}(x)\right) \dd \mu_{H \backslash F}(\mathfrak{p}).
\end{equation}
By \eqref{equation:int}, the pushforward $(\mathrm{pr})_*\mu_F$ of the measure $\mu_F$ satisfies for $f \in C^0(H \backslash F)$:
\[
\int_{H \backslash F} f ~(\mathrm{pr})_*\dd \mu_F = \int_F \mathrm{pr}^*f ~\dd \mu_F = \int_{H \backslash F} f(p) \vol(H \cdot p) \dd \mu_{H \backslash F}(\mathfrak{p}),
\]
that is, $(\mathrm{pr})_*\mu_F(\mathfrak{p}) = \vol(H \cdot p) \mu_{H \backslash F}(\mathfrak{p})$. A typical example where $H \backslash F$ is smooth is when $F=G$ is a compact Lie group and $H \leqslant G$ is a subgroup, see \S\ref{ssection:principal} where this is further discussed.

\section{Proof of the Theorem}

\subsection{Ergodicity}

\label{ssection:ergodicity}

We now prove the first part of Theorem \ref{theorem:brin}. We set $E_\star := E_{x_\star}$, where $x_\star$ is an arbitrary periodic point chosen in \S\ref{ssection:lift}. From now on, following Remark \ref{remark:choice}, we assume that an isometric diffeomorphism $E_\star \simeq F$ has been chosen and we will freely identify $E_\star$ with $F$. The general idea of this paragraph is the following: any (smooth) $H$-invariant object (such as a function, set, etc.) defined on $E_\star$ will give rise to a (smooth) flow-invariant object on the whole fiber bundle $E$.

We consider an element $\mathfrak{p} \in H \backslash F$. This corresponds to a closed submanifold $Q_\star(\mathfrak{p}) \subset E_{\star}$ in the fiber over $x_\star$ on which $H$ acts transitively. This submanifold inherits a natural metric $h_\star(\mathfrak{p})$ by restriction of the metric on $E_\star$ to it. Let $\mc{H}$ be the set of all homoclinic orbits to $x_\star$. We then define for $\gamma \in \mc{H}, x \in \gamma$, the set
\begin{equation}
\label{equation:qcirc}
Q_{x}(\mathfrak{p}) := \Hol^c_{x_1 \to x} \circ \Hol^u_{x_\star \to x_1} Q_\star(\mathfrak{p}),
\end{equation}
where $x_1 \in W^u_M(x_\star) \cap \gamma$ is arbitrary. Note that the choice of point $x_1$ on $W^u_M(x_\star)$ is irrelevant in \eqref{equation:qcirc} since $Q_\star$ is invariant by the action of $\rho(\gamma_\star)$. Equivalently, we could have defined $Q_x(\mathfrak{p})$ by using arbitrary stable and then flow-paths joining $x_\star$ to $x$, that is,
\begin{equation}
\label{equation:qcirc2}
Q_{x}(\mathfrak{p}) = \Hol^c_{x_2 \to x} \circ \Hol^s_{x_\star \to x_2} Q_\star(\mathfrak{p}),
\end{equation}
where $x_2 \in W^s_M(x_\star) \cap \gamma$ is arbitrary. This follows from the fact that holonomy along a homoclinic orbit is an element of $H$ and that $Q_\star(\mathfrak{p})$ is $H$-invariant.

We then set
\[
Q(\mathfrak{p}) := \overline{\bigsqcup_{x \in \mc{H}} Q_x(\mathfrak{p})}, .
\]
This set is flow-invariant by construction. 

\begin{lemma}
\label{lemma:q}
The closed set $Q(\mathfrak{p}) \subset E$ is a Hölder-continuous Riemannian fiber bundle $Q(\mathfrak{p}) \to M$, invariant by stable/unstable/flow holonomies, with smooth fiber isometric to $(Q_\star(\mathfrak{p}),h_\star(\mathfrak{p}))$.
\end{lemma}

\begin{proof}
In order not to burden the notation, we write $Q$ instead of $Q(\mathfrak{p})$ in the proof. We divide the proof into three steps: \\

\emph{Step 1:} To start with, let us show that $\pi : Q \to M$ is surjective. Take $x \in M$ and a sequence of points $(x_n)_{n \in Z_{\geq 0}}$ belonging to homoclinic orbits such that $x_n \to x$ (this is always possible by density of homoclinic orbits, see Lemma \ref{lemma:density}). Then, for every $x_n$, we consider $w_n \in Q$ such that $\pi(w_n) = x_n$. Up to extraction, we can assume that $w_n \to w \in E$. Since $Q$ is closed, $w \in Q$ and $\pi(w) = x$. This shows the claim. \\

\emph{Step 2:} We now show that $Q$ is invariant by stable/unstable/flow holonomies, that is, $\Phi_t(Q) = Q$ for all $t \in \R$ and for all $x \in M, y \in W^{s,u}_M(x)$, we have $\mathrm{Hol}^{s,u}_{x \to y}(Q_x) = Q_y$. Let us first argue on $Q|_{\mc{H}}$ (with some slight abuse of notation, we identify $\mc{H}$ and $\sqcup_{\gamma \in \mc{H}} \gamma$ here). Invariance by flow holonomies is immediate from the very construction. We now show invariance by unstable holonomies, the case of stable holonomies being treated similarly. It suffices to show, for $x,y$ belonging to (maybe distinct) homoclinic orbits, the inclusion $\mathrm{Hol}^{u}_{x \to y}(Q_x) \subset Q_y$ since the other inclusion is obtained by reversing the role of $x$ and $y$. Moreover, up to shifting the points $x$ and $y$ by the flow $(\Phi_t)_{t \in \R}$ (since $Q$ is flow-invariant), we can directly assume that $x,y \in W^u_M(x_\star)$. Now, if $w \in Q_x$, we can write $w = \mathrm{Hol}^u_{x_\star \to x} w'$ by \eqref{equation:qcirc}, for some $w' \in Q_\star$. But then 
\[
\mathrm{Hol}^u_{x \to y} w = \mathrm{Hol}^u_{x \to y} \circ  \mathrm{Hol}^u_{x_\star \to x} w' = \mathrm{Hol}^u_{x_\star \to y} w' \in Q_y,
\]
using \eqref{equation:qcirc} once again. To obtain the same statement for the stable holonomy $\mathrm{Hol}^s$, it suffices to use similarly \eqref{equation:qcirc2}. We now show the invariance of $Q$ over $M$ by stable/unstable holonomies. Take $x \in M, y \in W^u_M(x)$, $w \in Q_x$. We consider $w_n \in Q|_{\mc{H}}$ such that $w_n \to w$ and define $x_n := \pi(w_n)$. For every $n \in \Z_{\geq 0}$, we can find points $y_n \in W^u_M(x_n)$ such that $y_n \to y$. Note that, since the weak stable manifold $W^{ws}_M(x_n)$ of $x_n$ is dense in $M$ \cite{Plante-72} and transverse to the unstable manifold $W^{u}_M(x_n)$, we can always assume that $y_n$ lies on a homoclinic orbit, that is $y_n \in W^u_M(x_n) \cap W^{ws}_M(x_n)$. But then, we have $\mathrm{Hol}^u_{x_n \to y_n}w_n \in Q_{y_n}$ by the previous discussion and this converges to $\mathrm{Hol}^u_{x \to y}w$, that is, $\mathrm{Hol}^u_{x \to y}w \in Q_y$ by continuity of the unstable holonomy. Similarly, the same proof works for the stability by stable holonomy. This proves the claim. \\

\emph{Step 3:} We eventually show Hölder-continuity in the basepoint. We take an arbitrary point $x_0 \in \mc{H}$ on a homoclinic orbit and define
\[
\Sigma_{x_0} := \bigcup_{y \in W^u_{M,\delta}(x_0)} W^s_{M,\delta}(y),
\]
where, given $x \in M$, $W^{s,u}_{M,\delta}(x)$ denotes the disk of radius $\delta$ around $x$ in the leaf $W^{s,u}_M(x)$. We can parametrize locally a neighborhood $U$ of $x_0$ by the map (local product structure, see \cite[Proposition 6.2.2]{Fisher-Hasselblatt-19})
\[
\Psi : \Sigma_{x_0} \times (-\delta,\delta) \to U, ~~~~~~~~~(z,t) \mapsto \varphi_t(z).
\]
The map $\Psi$ is Hölder-continuous \cite[Proposition 6.2.2]{Fisher-Hasselblatt-19}. Any point $x \in U$ thus corresponds to a unique triple $(y,z,t)$ where $y \in W^u_{M,\delta}(x_0), z \in W^s_{M,\delta}(y)$ and $|t| < \delta$. We then observe that
\[
Q_x =  \mathrm{Hol}^c_{z \to x} \circ \mathrm{Hol}^s_{y \to z} \circ \mathrm{Hol}^u_{x_0 \to y} Q_{x_0},
\]
and this shows that $Q \to M$ is a fiber bundle over $M$ with fiber isometric to $Q_\star$ since the fiber is isometric to $Q_\star$ over $\mc{H}$ by \eqref{equation:qcirc} and the fibers vary Hölder-continuously in the base variable $x \in M$ since the holonomy maps are Hölder continuous in the base point as explained in \S\ref{ssection:lift}.
\end{proof}

We have thus shown so far that there is a partition
\begin{equation}
\label{equation:stratification}
E = \bigsqcup_{\mathfrak{p} \in H \backslash F} Q(\mathfrak{p}),
\end{equation}
where each $Q(\mathfrak{p}) \to M$ is a flow-invariant Riemannian fiber bundle with Hölder regularity. There is a natural projection $\mathrm{pr} : E \to H \backslash F$ defined as $\mathrm{pr}(w) = \mathfrak{p}$ if $w \in Q(\mathfrak{p})$. We will say that $w$ is \emph{principal} (or, equivalently, that $\mathfrak{p}$ is \emph{principal}) if the isotropy group associated to the $H$-invariant submanifold $Q_\star(\mathfrak{p}) \subset F$ is principal, as defined in \S\ref{ssection:actions}. We denote by $F_0 \subset F$ the (open) set of principal points. We define $E_0 := \mathrm{pr}^{-1}(H \backslash F_0)$ to be the set of principal points.

For every $\mathfrak{p} \in H \backslash F$, each fiber of $Q(\mathfrak{p})$ is smooth (since it is isometric to $Q_\star(\mathfrak{p})$) but it is not clear that $Q(\mathfrak{p}) \to M$ varies smoothly with the base point $x \in M$. When $\mathfrak{p}$ is a principal point, we can show that it does:

\begin{lemma}
\label{lemma:riem}
Let $\mathfrak{p} \in H \backslash F_0$ be a principal orbit. Then $Q(\mathfrak{p}) \to M$ is a smooth Riemannian fiber bundle with fiber diffeomorphic to a closed manifold $Q_0$, independent of $\mathfrak{p} \in H \backslash F_0$.
\end{lemma}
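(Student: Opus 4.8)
This statement splits into a topological part --- that the fibre type does not depend on $\mathfrak{p}$ --- and an analytic part --- smoothness of the bundle --- and the plan is to treat them in turn. For the topological part I would invoke the principal orbit theorem: since $\mathfrak{p}\in H\backslash F_0$ is principal, all points of $F_0$ lie on a single orbit type, so their isotropy groups are conjugate in $H$ to a fixed closed subgroup $H_0\leqslant H$; hence every principal orbit is diffeomorphic to the fixed closed manifold $Q_\star:=H/H_0$. Combined with the previous lemma (the fibre of $Q(\mathfrak{p})\to M$ is diffeomorphic to the principal orbit $Q_{x_\star}(\mathfrak{p})$), this gives that the fibre is diffeomorphic to $Q_\star$, independently of $\mathfrak{p}$. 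It then remains to upgrade the Hölder regularity of $Q(\mathfrak{p})\to M$ to smoothness, and here the plan is to exhibit $Q(\mathfrak{p})$ as a flow-invariant section of a finite-rank vector bundle over $M$ and feed it into Proposition~\ref{proposition:isomorphism}.

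To set this up, write $G:=\mathrm{Isom}(F)$. Since $E$ is a Riemannian fibre bundle it admits a reduction of its structure group to $G$, i.e. there is a principal $G$-bundle $P\to M$ --- the bundle of isometries $F\xrightarrow{\sim}E_x$ --- with $E=P\times_G F$, and the flow lifts to a smooth flow on $P$ by $u\mapsto\Phi_t\circ u$, covering $(\varphi_t)_{t\in\R}$. On any associated bundle this lifted flow induces a smooth derivation $\X$ in the $X_M$-direction with unitary propagator, whose associated Parry representation is $\rho:\mathbf{G}\to G$. Now let $N\leqslant G$ be the (compact) stabiliser of the submanifold $Q_{x_\star}(\mathfrak{p})\subset F$; by Peter--Weyl there is a finite-dimensional unitary $G$-representation $W$ and a $G$-equivariant embedding $G/N\hookrightarrow W$, realising $G/N$ as the orbit $G\cdot w_0$ of the vector $w_0:=[Q_{x_\star}(\mathfrak{p})]\in W$. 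Set $\mc{E}:=P\times_G W$. Because $H\leqslant N$, the vector $w_0$ is fixed by $N$, a fortiori by $\rho(\mathbf{G})\subset H$, so $w_0\in\mc{E}^\rho_{x_\star}$.

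By Proposition~\ref{proposition:isomorphism}, $w_0$ corresponds to a smooth flow-invariant section $\sigma\in\ker\X\cap C^\infty(M,\mc{E})$ with $\sigma(x_\star)=w_0$. On the other hand, $x\mapsto[Q_x(\mathfrak{p})]$ --- where $Q_x(\mathfrak{p})$ is carried into $F$ by any isometry $F\xrightarrow{\sim}E_x$ --- is a continuous flow-invariant section of $P\times_G(G\cdot w_0)\subset\mc{E}$ taking the value $w_0$ at $x_\star$; since a continuous flow-invariant section over an Anosov base is automatically invariant under the stable and unstable holonomies, and homoclinic orbits are dense (Lemma~\ref{lemma:density}), the definition of $Q_x(\mathfrak{p})$ in \eqref{equation:qcirc} forces the two sections to agree on all of $M$. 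Hence $\sigma$ is smooth and takes values in $P\times_G(G/N)$, which says precisely that the family $\big(Q_x(\mathfrak{p})\big)_{x\in M}$ of submanifolds of the fibres of $E$ varies smoothly; consequently $Q(\mathfrak{p})$ is a smooth submanifold of $E$, transverse to the fibres, and $\pi:Q(\mathfrak{p})\to M$ is a proper submersion with fibres diffeomorphic to the closed manifold $Q_\star$, hence a smooth fibre bundle by Ehresmann's theorem. Finally, each $Q_x(\mathfrak{p})$ is the isometric image of a $G$-translate of $Q_{x_\star}(\mathfrak{p})$, so all fibres are mutually isometric and the bundle is Riemannian.

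I expect the analytic step to be the main obstacle. The point is that one must package $Q(\mathfrak{p})$ as a section of a genuine finite-rank smooth Euclidean bundle over $M$ in order to apply Proposition~\ref{proposition:isomorphism}; the naive ``bundle of functions on the fibres of $E$'' is infinite-dimensional, which is what forces the detour through the homogeneous space $G/N$ linearised inside a finite-dimensional unitary $G$-representation. Two subsidiary checks then demand care: that the lift of the flow to the isometric frame bundle $P$ is smooth and furnishes a smooth derivation $\X$ with unitary propagator, so that Propositions~\ref{proposition:regularity} and \ref{proposition:isomorphism} apply verbatim; and the identification of the abstract section furnished by Proposition~\ref{proposition:isomorphism} with the geometrically defined one $x\mapsto[Q_x(\mathfrak{p})]$, which rests on the standard fact that continuous flow-invariant sections over an Anosov base are invariant under stable and unstable holonomies, together with the density of homoclinic orbits and the construction \eqref{equation:qcirc}.
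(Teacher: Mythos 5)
Your argument is correct in outline but takes a genuinely different route from the paper's. The paper works entirely inside $E$: it applies the slice theorem (Theorem \ref{theorem:slice}) at the principal point $p$ to build finitely many smooth $H$-invariant functions $f_1,\dots,f_k$ on a tubular neighbourhood of $Q_{x_\star}(\mathfrak{p})$ cutting out that orbit non-degenerately --- and this is where principality is used essentially, since triviality of the slice representation is what makes the coordinates $(z,v)\mapsto \chi(v)\langle v,\cdot\rangle$ well defined along the orbit --- then transports these functions to flow-invariant functions on $E$ via the holonomy isometries $\mc{I}_x$, and gets their joint smoothness from the Journé lemma applied to the transverse stable/unstable/central and fibre foliations; $Q(\mathfrak{p})$ is then a non-degenerate common zero set. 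You instead linearize: you package the family $x\mapsto Q_x(\mathfrak{p})$ as a flow-invariant section of the finite-rank associated bundle $P\times_G W$, where $W$ is a Mostow/Peter--Weyl embedding of $G/N$, produce the smooth section through $w_0$ by Proposition \ref{proposition:isomorphism}, and identify it with the geometric family using holonomy invariance of continuous flow-invariant sections together with Lemma \ref{lemma:density} and \eqref{equation:qcirc}. Your route is conceptually cleaner and, notably, would give smoothness of $Q(\mathfrak{p})$ for \emph{every} $\mathfrak{p}$, principal or not, since principality enters your proof only through the orbit-type statement; its cost is exactly the two compatibility checks you flag, which are routine but must be written out: that the flow lifts smoothly to the bundle $P$ of isometries $F\to E_x$ (a parametrized Myers--Steenrod statement), and that the stable/unstable holonomies of $P\times_G W$ are the images under the associated-bundle functor of those of $E$ (both being characterized by the limit formula \eqref{equation:holonomy}, independently of the chosen connection), so that the representation entering Proposition \ref{proposition:isomorphism} is indeed the composition of Parry's representation with $G\to\mathrm{U}(W)$ and fixes $w_0$. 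The two regularity mechanisms are also different in spirit: the paper's is Journé's lemma, yours is the Pollicott--Ruelle bootstrap hidden in the surjectivity half of Proposition \ref{proposition:isomorphism}.
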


\begin{proof}
Let $p \in E_\star \simeq F$ be a principal point such that $\mathrm{pr}(p) = \mathfrak{p}$. For $\eps > 0$ small enough, let $N_p^{< \eps} := \left\{v \in N_p ~|~ |v| < \eps\right\}$, $S_p := \exp_p(N_p^{< \eps})$ and $\mc{U} := H \cdot S_p$ be a small tubular neighborhood of the orbit $H \cdot p$ as in \S\ref{ssection:actions}. Since $p$ is principal, we get by Theorem \ref{theorem:slice} that
\[
\exp : N^{< \eps} = H \cdot p \times N_p^{< \eps} \ni (z, v) \mapsto \exp_z(v(z)) \in \mc{U},
\]
is an $H$-equivariant\footnote{The $H$-action is $(z,v) \mapsto (h \cdot z, v)$ for $(z,v) \in H \cdot p \times N_p^{< \eps}$.} diffeomorphism where, given $h \in H$ such that $z = h \cdot p$, we set $v(z) := dh_p(v)$. (This is indeed well-defined since $p$ is a principal point and thus the slice representation \eqref{equation:slice-rep} is trivial.) Let $(\e_1,...,\e_k) \in N_p$ be an orthonormal basis, and $\chi \in C^\infty_{\comp}(N_p)$ be a smooth cutoff function with support in $\left\{|v| < \eps\right\}$ and taking value $1$ on $\left\{|v|<\eps/2\right\}$. In the smooth coordinates $(z,v) \in H \cdot p \times N_p^{< \eps}$, we define the functions $f_1, ..., f_k$ by $f_i(z,v) := \chi(v) \langle v, \e_i \rangle$. These are clearly smooth $H$-invariant functions with support in a tubular neighborhood of $H \cdot p$ such that the following holds: setting $\mc{U}' := N_p^{< \eps/2}$, we have $\cap_i \left\{f_i = 0\right\} \cap \mc{U}' = H \cdot p$. Moreover, these functions are non-degenerate on $H \cdot p$, namely $(df_1,...,df_k)$ has maximal rank on $H \cdot p$.

Given a point $x \in M$, we can consider a concatenation of continuous paths all contained in a strong stable/unstable/flow leaf and joining $x$ to $x_\star$. Taking the composition of the stable/unstable/flow holonomies induced by these paths, we obtain an isometry $\mc{I}_x : E_x \to E_{\star}$. We then define $f_i|_{E_x} := \mc{I}_x^*(f_i|_{E_{\star}})$ and the open set $\mc{U}_E' := \cup_{x \in M} \mc{I}_x^{-1} (\mc{U}') \subset E$. We claim that this does not depend on the choice of $us$- and flow-path chosen to connect $x$ to $x_\star$. Indeed, for any other choice $\mc{I}'_x$, $g := \mc{I}'_x \circ \mc{I}_x^{-1}$ is an element of $\mathrm{Isom}(E_\star)$. Moreover, since for every $\mathfrak{p} \in H \backslash F$, the bundle $Q(\mathfrak{p})$ is invariant by stable/unstable/flow holonomies by Lemma \ref{lemma:q}, we get that $g$ preserves the partition
\[
F \simeq E_\star = \bigsqcup_{\mathfrak{q} \in H \backslash F} Q_\star(\mathfrak{q}),
\]
that is $g\left(Q_\star(\mathfrak{p})\right)= Q_\star(\mathfrak{p})$ for all $\mathfrak{p} \in H \backslash F$. Thus, if $f \in C^\infty(F)$ is $H$-invariant, it is constant along all manifolds $Q_\star(\mathfrak{p})$ and thus $g^*f = f$, that is $\mc{I}_x^* f = {\mc{I}'_x}^*f$. (The same argument also shows that the set $\mc{U}_E'$ is well-defined, independently of the choice of isometry $\mc{I}_x$.)

As a consequence, this defines a set of functions $f_i$ on some open flow-invariant subset $\mc{U}_E' \subset E$ which are constant along all submanifolds of the partition \eqref{equation:stratification} by construction. In particular, their restriction to the stable/unstable/flow foliation is smooth. Since $\mc{I}_x : E_x \to E_{\star}$ is an isometry, it is smooth by the Myers-Steenrod Theorem \cite{Myers-Steenrod-39} and thus the functions $f_i$ are fiberwise smooth in the fibers of $E$. Since the stable/unstable/flow foliation and the fiber foliation are all pairwise transverse, we can apply the Journé lemma \cite[Theorem on p.345]{Journe-86} which yields that the $f_i$'s are smooth. In order to conclude, it suffices to observe that $Q(\mathfrak{p}) = \cap_i \left\{f_i = 0\right\} \cap \mc{U}'_E$ and that $(df_1,...,df_k)$ have full rank at $\left\{f_i=0\right\} \cap \mc{U}'_E$ by construction since $\mc{I}_x$ are isometries. Hence, $Q(\mathfrak{p})$ is a smooth submanifold of $E$\footnote{The functions $f_i$ are smooth and well-defined on all of $E$ but one needs to introduce $\mc{U}'_E$ in order to characterize $Q(\mathfrak{p})$ as some zero-set as all the functions $f_i$ vanish away from $\mc{U}_E$.}. The fact that the fiber of $Q(\mathfrak{p})$ does not depend on $\mathfrak{p}$ (as a differentiable manifold) is a direct consequence of the fact that orbits of principal points in $F$ are all diffeomorphic. However, the fibers of $Q(\mathfrak{p})$ \emph{do} depend on $\mathfrak{p}$ as Riemannian submanifolds. Also observe that in this construction, it was important to work with principal orbits because they allow to construct $H$-invariant functions $f_i$ that can then be pushed smoothly to the whole manifold.
\end{proof}

Recall that $\mathrm{pr} : E \to H \backslash F$ is the natural projection and $E_0$ is the set of principal points.

\begin{lemma}
\label{lemma:measure-0}
The map $\mathrm{pr} : E_0 \to H \backslash F_0$ is a smooth Riemannian submersion, where the Riemannian structure on $H \backslash F_0$ is the quotient structure introduced at the end of \S\ref{ssection:actions}. Moreover, $\mu_E(E \setminus E_0) = 0$.
\end{lemma}

\begin{proof}
Let $x \in M$, $\mc{I}_x : E_x \to E_\star$ be an arbitrary isometry obtained by taking holonomies along $us$- and flow-paths joining $x$ to $x_\star$. Let $w \in E_x$ be a principal point. Since this is an open property, any point in a neighborhood of $w$ is also principal. Let $\mathrm{pr}_{\star} : E_\star \simeq F \to H \backslash F$ be the projection. Note that, in restriction to principal points, $\mathrm{pr}_\star : F_0 \to H \backslash F$ is a Riemannian submersion by the discussion at the end of \S\ref{ssection:actions}. We have $\mathrm{pr} = \mathrm{pr}_{\star} \circ \mc{I}_x$ as $Q(\mathfrak{p})$ is invariant by stable/unstable/flow holonomies (by Lemma \ref{lemma:q}) and $\mc{I}_x(w)$ is also principal. Since both $\mc{I}_x$ and $\mathrm{pr}_{\star}$ can be differentiated in the vertical direction, i.e. in the direction of the fiber, respectively at $w$ and $\mc{I}_x(w)$ (note that $\mathrm{pr}_{\star}$ is differentiable at $\mc{I}_x(w)$ in the vertical direction because it is a principal point), we get that $\mathrm{pr}$ can be differentiated in the vertical direction at $w$ and $d (\mathrm{pr})(V) = d(\mathrm{pr}_{\star})(d \mc{I}_x(V))$, for $V \in \V(w)$ a vertical vector. By construction $\mathfrak{p}=\mathrm{pr}(w)$. If $V$ is orthogonal to $Q(\mathfrak{p})$, then $d\mc{I}_x(V)$ is also orthogonal to $Q(\mathfrak{p})$ and $d (\mathrm{pr}_{\star}) :  (T_{\mc{I}_x(w)}Q(\mathfrak{p}))^\bot \to T_{\mathfrak{p}}(H \backslash F_0)$ is an isometry (because $\mathrm{pr}_\star$ is a Riemannian submersion) so the norm of $d (\mathrm{pr})(V)$ is preserved. If $V$ is tangent to $Q(\mathfrak{p})$, then we get $d(\mathrm{pr})(V)=0$. The same argument also shows that the restriction of $\mathrm{pr}$ to the vertical fiber of $w$ is smooth (i.e. can be differentiated infinitely many times in the vertical direction). Moreover, the restriction of the map $\mathrm{pr}$ to $W^{s,u,c}_E(w)$ is constant, equal to $\mathfrak{p}$, so in particular it is smooth. Overall, we see that: 1) $d \mathrm{pr} : T_w Q(\mathfrak{p})^\bot \to T_{\mathfrak{p}}(H \backslash F)$ is an isometry, 2) by the Journé lemma \cite[Theorem on p.345]{Journe-86}, $\mathrm{pr}$ is smooth since it is smooth along a system of transverse foliations. This proves the first part of the Lemma.

It remains to show that principal points have full measure in $E$. By Lemma \ref{lemma:measure}, we know that in the fiber $E_{\star}$, principal points have full measure. Given $x \in M$, we deduce that $E_x \cap E_0 = \mc{I}_x^{-1} (E_{\star} \cap E_0)$ has full measure in $E_x$. Hence:
\[
\int_{E} \mathbf{1}_{E \setminus E_0} \dd \mu_E = \int_{M} \left(\int_{E_x} \mathbf{1}_{E_x \setminus E_0} \dd \mu_{E_x} \right) \dd \mu_M(x) = 0.
\]
\end{proof}

Given $\mathfrak{p} \in H \backslash F_0$, the smooth manifold $Q(\mathfrak{p})$ is a Riemannian fiber bundle over $M$ by Lemma \ref{lemma:riem}. Its fiber $Q_x(\mathfrak{p})$ over $x \in M$ is equipped with a natural metric $h_x$ inherited from the metric on $E_x$ and the fibers $(Q_x,h_x)$ for $x \in M$ are all isometric. In particular, each fiber is equipped with a natural Riemannian measure. Thus, there is a natural smooth flow-invariant measure $\mu_{Q(\mathfrak{p})}$ on $Q(\mathfrak{p})$ obtained by wedging the (pullback of the) measure $\mu_M$ with the Riemannian measure in the fibers. 

\begin{lemma}
\label{lemma:ergodicity}
Let $\mathfrak{p}$ be a principal point. Then the restriction of the flow $(\Phi_t)_{t \in \R}$ to $Q(\mathfrak{p})$ is ergodic with respect to $\mu_{Q(\mathfrak{p})}$.
\end{lemma}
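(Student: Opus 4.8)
The plan is to show that any $u\in L^2(Q(\mathfrak{p}))$ (with respect to the canonical $\Phi_t$-invariant smooth measure on $Q(\mathfrak{p})$) satisfying $X_E u=0$ in the distributional sense must be almost everywhere constant. First I would record what principality of $\mathfrak{p}$ buys us: as established above, $\pi:Q(\mathfrak{p})\to M$ is a \emph{smooth} Riemannian fiber bundle with fiber a fixed closed manifold $Q_\star$, and $(\Phi_t)_{t\in\R}$ restricts to an isometric extension of the Anosov flow on $M$ in the sense of \S\ref{ssection:lift}. Its transitivity group --- the closure $H'$ of the group generated by the holonomies of $Q(\mathfrak{p})$ along homoclinic orbits --- is the image of the compact group $H=\overline{\rho(\mathbf{G})}$ under the (continuous) restriction homomorphism to the orbit $Q_{x_\star}(\mathfrak{p})=H\cdot p\simeq Q_\star$, hence, since $H$ acts transitively on the orbit $H\cdot p$, the group $H'$ acts transitively on $Q_\star$. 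So Lemma~\ref{lemma:ergodicity} amounts to: for this particular isometric extension, whose transitivity group acts transitively, the flow is ergodic.

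The core of the argument is a Peter--Weyl reduction that brings us into the finite-rank framework of \S\ref{ssection:invariant}. Since $Q(\mathfrak{p})\to M$ admits a reduction of its structure group to $\mathrm{Isom}(Q_\star)$, decomposing the unitary $\mathrm{Isom}(Q_\star)$-representation $L^2(Q_\star)$ into isotypic components indexed by the irreducible unitary representations $\sigma$ of $\mathrm{Isom}(Q_\star)$ produces a family of \emph{finite-rank smooth Hermitian vector bundles} $\mc{E}_\sigma\to M$, together with an isometric identification of $L^2(Q(\mathfrak{p}))$ with the Hilbert direct sum $\bigoplus_\sigma L^2(M,\mc{E}_\sigma)$. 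Because $\Phi_t:Q_x(\mathfrak{p})\to Q_{\varphi_t x}(\mathfrak{p})$ is a fibrewise isometry, it acts in local trivialisations through the structure group $\mathrm{Isom}(Q_\star)$, hence preserves each $\mc{E}_\sigma$, and the generator $\X_\sigma$ of the induced flow on $C^\infty(M,\mc{E}_\sigma)$ is a derivation in the $X_M$-direction with fibrewise unitary propagator. Writing $u=\sum_\sigma u_\sigma$ with $u_\sigma\in L^2(M,\mc{E}_\sigma)$, the hypothesis $X_E u=0$ becomes $\X_\sigma u_\sigma=0$ for every $\sigma$, so it suffices to analyse each $u_\sigma$ separately.

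For each $\sigma$ I would then invoke Proposition~\ref{proposition:regularity} to conclude that $u_\sigma$ is smooth, and Proposition~\ref{proposition:isomorphism} to identify $u_\sigma$ with $u_\sigma(x_\star)\in(\mc{E}_\sigma)^{\rho}_{x_\star}$, a vector fixed by the Parry representation $\rho:\mathbf{G}\to\mathrm{U}((\mc{E}_\sigma)_{x_\star})$ obtained by parallel transport along homoclinic orbits. Under the identification of $(\mc{E}_\sigma)_{x_\star}$ with the $\sigma$-isotypic subspace of $L^2(Q_{x_\star}(\mathfrak{p}))=L^2(Q_\star)$, this representation is exactly the one induced by the holonomy maps $\gamma\mapsto\rho(\gamma)|_{Q_{x_\star}(\mathfrak{p})}\in\mathrm{Isom}(Q_\star)$ (acting by precomposition with the inverse). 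Hence $u_\sigma(x_\star)$, viewed as a function on $Q_\star$, is invariant under all these holonomies, therefore under $H'$, and since $H'$ acts transitively it is a constant function on $Q_\star$. Constants exhaust the trivial isotypic component, because $L^2(Q_\star)^{\mathrm{Isom}(Q_\star)}\subseteq L^2(Q_\star)^{H'}$ consists only of constants; consequently $u_\sigma(x_\star)=0$ --- hence $u_\sigma=0$ by injectivity in Proposition~\ref{proposition:isomorphism} --- for every nontrivial $\sigma$, while $u_{\mathrm{triv}}$ coincides, by the uniqueness part of the same proposition, with the constant section of $Q(\mathfrak{p})$ taking value $u_{\mathrm{triv}}(x_\star)$ (equivalently, $u_{\mathrm{triv}}$ descends to an $X_M$-invariant $L^2$ function on $M$, which is constant by ergodicity of the volume-preserving Anosov flow $(\varphi_t)_{t\in\R}$, see Appendix~\ref{appendix}). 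Thus $u=u_{\mathrm{triv}}$ is constant.

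The only delicate point --- and the step I expect to be the main obstacle --- is the Peter--Weyl reduction itself: one must verify that the isotypic pieces of the infinite-dimensional fibrewise $L^2$-bundle over $M$ are genuine smooth finite-rank subbundles, that $(\Phi_t)_{t\in\R}$ acts on them through fibrewise unitaries with precisely the regularity required by \S\ref{ssection:invariant}, and that the Parry representations on these pieces are exactly the holonomy representations; granting this, the argument reduces entirely to Propositions~\ref{proposition:regularity} and~\ref{proposition:isomorphism} together with the elementary fact that a transitive isometric action leaves invariant only the constant $L^2$ functions.
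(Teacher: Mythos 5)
Your argument is correct and follows the same architecture as the paper's proof: decompose the fibrewise $L^2$ space into finite-rank, smooth, flow-invariant Hermitian subbundles over $M$, apply Proposition~\ref{proposition:regularity} to upgrade each flow-invariant $L^2$ component to a smooth section, apply Proposition~\ref{proposition:isomorphism} to reduce to a vector fixed by Parry's representation, and use transitivity of $H$ on the orbit $Q_{x_\star}(\mathfrak{p})$ to force that vector to be constant, the trivial piece then descending to $M$ and being constant by ergodicity of the base flow. The one genuine difference is the choice of finite-rank decomposition. You use the Peter--Weyl isotypic decomposition of $L^2(Q_\star)$ under $\mathrm{Isom}(Q_\star)$, and you correctly flag the delicate point: finite-dimensionality of the isotypic pieces is not automatic for a compact group acting on a manifold (the trivial isotypic piece of a non-transitive action is already infinite-dimensional) and here rests on the fact that $\mathrm{Isom}(Q_\star)\supseteq H'$ acts transitively, so that $L^2(Q_\star)=L^2(K/K_0)$ and Frobenius reciprocity bounds each multiplicity by $\dim\sigma$. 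The paper instead decomposes $L^2(Q_x)$ into eigenspaces of the fibrewise Laplacian $\Delta_x$; these are finite-dimensional by elliptic theory with no transitivity input, vary smoothly in $x$ because the fibre metrics do, and are preserved by $\Phi_t$ because it acts by fibrewise isometries. So the Laplacian route buys you the finite-rank reduction for free and dissolves exactly the obstacle you identify, while your route makes the representation-theoretic content (and the identification of the induced Parry representation with the holonomy representation by pullback) more transparent; the remainder of the two proofs is identical.
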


\begin{proof}
For the sake of simplicity, we write $Q = Q(\mathfrak{p})$. Since $\Phi_t : E_x \to E_{\varphi_t x}$ is an isometry, it is also an isometry $Q_x \to Q_{\varphi_t x}$. On each fiber $Q_x$, we can consider the (non-negative) Laplace operator $\Delta_x$ acting on $L^2(Q_x)$ (equipped with the smooth Riemannian volume induced by $h_x$). This operator is also self-adjoint. It has discrete spectrum $\lambda_0 = 0 < \lambda_1 \leq \lambda_2 < ...$ accumulating to $+\infty$. We can decompose $L^2(Q_x) = \oplus_{k \geq 0} \mc{E}_{Q,x}^{(k)}$, where each
\begin{equation}
\label{equation:eq}
\mc{E}_{Q,x}^{(k)} := \ker(\Delta_x-\lambda_k),
\end{equation}
is the finite-dimensional vector space associated to the eigenvalue $\lambda_k$. For each $k \geq 0$, we obtain a well-defined smooth Hermitian vector bundle $\mc{E}^{(k)}_Q \to M$. Moreover there is a natural embedding $\iota_k : C^\infty(M,\mc{E}^{(k)}_Q) \to C^\infty(Q)$ given by $\iota_k f(x,z) := f_x(z)$, where $x \in M, z \in E_x$ and $f_x \in \mc{E}^{(k)}_{Q,x}$ is the section $f$ at the point $x$. We will freely use this identification and drop the notation $\iota_k$. Note that smoothness of $\mc{E}^{(k)}_Q$ follows directly from the local triviality of $E|_{U} \simeq U \times (F,g_F)$ if $U \subset M$ is an open subset. Also observe that the spaces $\mc{E}^{(k)}_Q$ are $\Phi_t$-invariant (i.e. invariant by pullback by $\Phi_t^*$) since the flow is an isometry.

Let $f \in L^2(Q)$ such that $X_Q f = 0$, where $X_Q$ is the restriction of $X_E$ to $Q$. We can write $f = \sum_{k \geq 0} f_k$, where each $f_k$ is a section of $L^2(M,\E^{(k)}_Q)$. Observe that
\[
\|f\|^2_{L^2(Q)} = \sum_{k=0}^{+\infty} \|f_k\|_{L^2(M,\E^{(k)}_Q)}^2,
\]
and each $f_k \in L^2(M,\E^{(k)}_Q)$ satisfies $X_{Q} f_k = 0$. We let $\X_k$ be the restriction of $X_Q$ to sections of $\mc{E}^{(k)}_Q \to M$. It satisfies $\iota_k \X_k = X_Q \iota_k$. Note that $\X_k$ satisfies the assumptions introduced in the beginning of \S\ref{ssection:invariant}: it is a formally self-adjoint differential operator of order $1$, acting as a derivation in the $X_M$-direction. The induced operator $(\Phi_t)_* : \mc{E}_{Q,x}^{(k)} \to \mc{E}_{Q,\varphi_t x}^{(k)}$ is a fiberwise linear isometry now. By the regularity Proposition \ref{proposition:regularity} applied with $\X_k$, we deduce that $f_k$ is smooth. By Proposition \ref{proposition:isomorphism}, this defines by evaluation at $x_\star$ an element $f_k(x_\star) \in \mc{E}^{(k)}_{Q,x_\star}$ (it is an eigenfunction of the Laplacian on $Q_\star$) which is invariant by all elements of the induced representation $\rho^{(k)} : \mathbf{G} \to \mathrm{U}(\E^{(k)}_{Q,x_\star})$. The $\mathbf{G}$-action on $\E^{(k)}_{Q,x_\star}$ is simply given by pullback, namely $\rho^{(k)}(\gamma) u(\bullet) = u(\rho(\gamma)\bullet)$ for all $u \in \E^{(k)}_{Q,x_\star}$. Thus $f_k(x_\star)$ is fixed by all elements of $H$. However, $f_k(x_\star) \in C^\infty(Q_\star)$ and $H$ acts transitively on $Q_\star$, so it implies that $f_k(x_\star)$ is constant. This yields $f_k = 0$ if $k>0$. It remains to deal with $f_0$. The previous argument shows that it is constant on all fibers $Q_{x}$, flow-invariant and smooth, so it descends to a smooth flow-invariant function on $M$. By ergodicity of the flow $(\varphi_t)_{t \in \R}$ on $M$, we get that it is constant. 
\end{proof}

Recall that $\mu_{H \backslash F}$ is the Riemannian measure on $H \backslash F$ as discussed at the end of \S\ref{ssection:actions}. Define the measure $\nu(\mathfrak{p}) := \vol(Q(\mathfrak{p}))\mu_{H \backslash F}(\mathfrak{p})$ on $H \backslash F$, where the volume of $Q(\mathfrak{p})$ is computed with respect to the measure $\mu_{Q(\mathfrak{p})}$. Observe that, as $\mu_M$ was normalized with volume $1$, one has $\vol(Q(\mathfrak{p})) = \vol(Q_\star(\mathfrak{p}))$ (the volume of $Q(\mathfrak{p})$ is equal to the volume of the Riemannian fiber over $x_\star$). A quick computation (similar to the one carried out at the end of \S\ref{ssection:actions}) then shows that $\nu = \mathrm{pr}_*(\mu_E)$. It remains to show the following:

\begin{lemma}
\label{lemma:isomorphism}
There exists a natural isometry $\Psi :  L^2(H \backslash F, \nu) \xrightarrow{\sim} \ker_{L^2}(X_E)$.
\end{lemma}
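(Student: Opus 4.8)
The plan is to build $\Psi$ by "pushing" an $L^2$ function defined on the single fiber $E_\star \simeq F$ along the flow and the holonomies, exactly as in the smooth case of Proposition \ref{proposition:isomorphism}, and then to verify that this construction is (i) well-defined as a map into $L^2$, (ii) isometric, and (iii) surjective. First I would fix $f \in L^2(H\backslash F,\nu)$ and pull it back via $\mathrm{pr}_\star : F \to H\backslash F$ to an $H$-invariant function $f_\star := \mathrm{pr}_\star^* f \in L^2(F)$; note $\|f_\star\|_{L^2(F,\mu_F)}^2 = \|f\|_{L^2(H\backslash F,\nu)}^2/\vol(M)$ by the definition $\nu = \vol(M)(\mathrm{pr}_\star)_*\mu_F$. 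For $x$ on a homoclinic orbit, transport $f_\star$ to $E_x$ using the isometry $\mc{I}_x : E_x \to E_\star$ built from $us$- and flow-holonomies as in the previous lemmas, and set $(\Psi f)|_{E_x} := \mc{I}_x^* f_\star$. As in the proof that the functions $f_i$ were well-defined, this is independent of the chosen chain of holonomies: any two such isometries differ by $g \in \mathrm{Isom}(E_\star)$ that preserves each orbit $Q_{x_\star}(\mathfrak p)$ (since each $Q(\mathfrak p)$ is holonomy-invariant), hence fixes the $H$-invariant — i.e.\ $\mathrm{pr}_\star$-fibrewise-constant — function $f_\star$. Since homoclinic orbits are dense (Lemma \ref{lemma:density}) and the holonomies vary Hölder-continuously in the base point, this extends to a measurable (in fact, fibrewise-$L^2$, base-Hölder) function $\Psi f$ on all of $E$.

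Next I would check $\Psi f \in \ker_{L^2} X_E$. Flow-invariance is built in: $\mc{I}_{\varphi_t x} = \Phi_{-t}\circ\mc{I}_x$ up to the holonomy ambiguity already shown irrelevant, so $\Phi_t^*(\Psi f) = \Psi f$, which gives $X_E(\Psi f) = 0$ in the distributional sense (one can make this rigorous by testing against $C^\infty$ functions and using that $t \mapsto \Phi_t^*(\Psi f)$ is constant in $L^2$). For the norm, apply the fibre-integration formula: since $\mc{I}_x$ is a fibrewise isometry and $\mu_E$ is the pullback of $\mu_M$ wedged with $\mu_F$,
\[
\|\Psi f\|_{L^2(E)}^2 = \int_M \|\mc{I}_x^* f_\star\|_{L^2(E_x)}^2 \,\dd\mu_M(x) = \int_M \|f_\star\|_{L^2(F,\mu_F)}^2 \,\dd\mu_M(x) = \vol(M)\,\|f_\star\|_{L^2(F,\mu_F)}^2 = \|f\|_{L^2(H\backslash F,\nu)}^2,
\]
so $\Psi$ is an isometric embedding.

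For surjectivity, take $u \in \ker_{L^2} X_E$ and decompose it over the eigenbundles $\mc{E}^{(k)}_Q$ of the fibrewise Laplacians (using the partition \eqref{equation:stratification} and Lemma \ref{lemma:measure-0} to reduce to the full-measure principal set $E_0$). The argument in Lemma \ref{lemma:ergodicity} applies verbatim on each principal stratum $Q(\mathfrak p)$: Proposition \ref{proposition:regularity} forces each component to be smooth, Proposition \ref{proposition:isomorphism} identifies it with an $H$-fixed Laplace eigenfunction on the fibre, and transitivity of $H$ on each orbit kills all $k>0$ components and makes the $k=0$ component fibrewise constant. Hence $u$ is, on $E_0$ (so $\mu_E$-a.e.), constant along each $Q(\mathfrak p)$, i.e.\ $u = \mathrm{pr}^* \bar u$ for some function $\bar u$ on $H\backslash F_0$; pulling back $\bar u$ along the holonomies to $E_\star$ gives an $H$-invariant $L^2$ function on $F$, i.e.\ an element of $L^2(H\backslash F,\nu)$ whose image under $\Psi$ is $u$. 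The main obstacle is the regularity/measurability bookkeeping: a priori $u$ is only $L^2$, not smooth, so one must first restrict to $E_0$, project $u$ fibrewise onto each $\mc{E}^{(k)}_Q$ to obtain genuine $L^2$ sections $u_k$ of $\mc{E}^{(k)}_Q \to M$ satisfying $\X_k u_k = 0$, and only then invoke the smoothness Proposition \ref{proposition:regularity}; one also has to argue that the resulting $\bar u$ on $H\backslash F_0$ is honestly $L^2$ with respect to $\nu$, which again follows from the fibre-integration formula of Lemma \ref{lemma:fubini} together with Lemma \ref{lemma:measure-0}.
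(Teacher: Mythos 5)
Your plan is correct, but it deviates from the paper's proof in two instructive ways. First, for the forward direction the paper defines $\Psi$ only on $C^\infty(H\backslash F)$ (where smoothness of $\Psi u$ follows from the Journé lemma), proves the norm identity by disintegrating $\mu_E$ over $H\backslash F$ via the Riemannian submersion $\mathrm{pr}$ as in \eqref{equation:desintegration}, and then extends by density; this forces it to work first with an auxiliary measure $\nu_0$ with $\dd\nu_0(\mathfrak{p})=\vol(Q(\mathfrak{p}))\,\dd m(\mathfrak{p})$ and to verify separately, by comparing the disintegrations of $\mu_E$ and $\mu_F$, that $\nu_0=\vol(M)(\mathrm{pr}_\star)_*\mu_F$. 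You instead define $\Psi f$ directly for $f\in L^2$ and compute the norm by fibre integration over $M$, which hands you the correct measure $\nu$ immediately — a genuinely cleaner route for that step, at the modest price of having to check measurability of $(x,w)\mapsto f_\star(\mc{I}_x w)$ (choose the holonomy chains locally via the product structure; different $L^2$-representatives of $f_\star$ change $\Psi f$ only on a $\mu_E$-null set by Fubini). Second, for surjectivity the paper does \emph{not} restrict $u$ to the strata: it decomposes $u\in\ker_{L^2}X_E$ into vertical Fourier modes on the \emph{full} fibres of $E$, applies Proposition \ref{proposition:regularity} to each mode to conclude that $\ker_{C^\infty}X_E$ is dense in $\ker_{L^2}X_E$, and then uses that the isometry $\Psi$ has closed range containing $\ker_{C^\infty}X_E$. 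Your route — disintegrate $u$ over $H\backslash F_0$ and run Lemma \ref{lemma:ergodicity} on almost every stratum — works, but ``applies verbatim'' is a slight overstatement: you must first show that for a.e.\ $\mathfrak{p}$ the restriction $u|_{Q(\mathfrak{p})}$ lies in $L^2(Q(\mathfrak{p}))$ \emph{and} is flow-invariant there (pass from $e^{tX_E}u=u$ in $L^2(E)$ to $e^{tX_Q}(u|_{Q(\mathfrak{p})})=u|_{Q(\mathfrak{p})}$ for a.e.\ $\mathfrak{p}$, handling the ``a.e.\ $\mathfrak{p}$ versus all $t$'' quantifier swap via a countable dense set of times). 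The paper's density argument sidesteps this bookkeeping entirely, which is presumably why it was chosen.
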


In other words, given a $H$-invariant $L^2$-function on the fiber $E_{\star}$, there is a unique way of pushing it by the flow in order to obtain an element in $\ker_{L^2}(X_E)$.

\begin{proof}
We start by defining $\Psi$ for smooth functions in $H \backslash F$. Given $u \in C^\infty(H \backslash F)$ (that is, $u$ is smooth on $F$ and $H$-invariant), taking an arbitrary isometry $\mc{I}_x : E_x \to E_{\star}$ obtained from holonomies along $us$- and flow-paths, we set $\Psi u (x) := \mc{I}_x^*u$. As before, we get that $\Psi u \in  \ker_{C^\infty} X_E$, that is, $u$ is a smooth flow-invariant function. Indeed, this follows once again from the Journé Lemma: vertical smoothness is immediate since $\mc{I}_x$ is an isometry while smoothness in the stable/unstable/flow directions follows from the fact that $\Psi u$ is constant along these leaves. 

We now extend $\Psi$ to $L^2(H \backslash F, \nu)$. Since $\mathrm{pr} : E \to H \backslash F$ is a smooth Riemannian submersion except on a set of measure zero in $E$ (by Lemma \ref{lemma:measure-0}) which we may neglect, we may write for all $f \in C^\infty(E)$:
\begin{equation}
\label{equation:desintegration}
\int_E f \dd \mu_E = \int_{H \backslash F} \left(\int_{Q(\mathfrak{p})} f \dd \mu_{Q(\mathfrak{p})}\right) \dd \mu_{H \backslash F}(\mathfrak{p}).
\end{equation}
In particular, we see that
\begin{equation}
\label{equation:decomp-norm}
\|\Psi u\|^2_{L^2(E)} = \int_{H \backslash F} \|\Psi u\|^2_{L^2(Q_{\mathfrak{p}})} \dd \mu_{H \backslash F}(\mathfrak{p}) =  \int_{H \backslash F} |u(\mathfrak{p})|^2  \vol(Q(\mathfrak{p}))\dd \mu_{H \backslash F} = \|u\|^2_{L^2(H \backslash F,\nu)}.
\end{equation}
Hence, by density of $C^\infty(H \backslash F)$ in $L^2(H \backslash F, \nu)$ and using that $\ker_{L^2}(X_E)$ is closed in $L^2(E)$, \eqref{equation:decomp-norm} shows that $\Psi : C^\infty(H \backslash F) \to \ker_{C^\infty}(X_E)$ extends continuously to an isometry $L^2(H \backslash F,\nu) \to \ker_{L^2}(X_E)$. 

In order to obtain that $\Psi$ is onto, it now suffices to show that $\ker_{C^\infty} X_E$ is dense in $\ker_{L^2}(X_E)$. Given $f \in L^2(E)$, we can decompose it in vertical Fourier modes (as we did on $Q$), namely we can write $f = \sum_k f_k$, where each $f_k \in L^2(M,\mc{E}^{(k)}_E)$ is an eigenfunction of the vertical Laplacian on the fibers of $E$, associated to some distinct eigenvalues $\mu_k \geq 0$. The vector field $X_E$ preserves this decomposition. Hence, if $f \in \ker_{L^2}(X_E)$, we get that each $f_k \in L^2(M,\mc{E}^{(k)}_E)$ is also flow-invariant. By Proposition \ref{proposition:regularity}, we get that each $f_k$ is actually smooth. Setting $f_N := \sum_{k \leq N} f_k$, we have that $f_N \in C^\infty(E)$ is flow-invariant, smooth and $f_N \to_{N \to \infty} f$ in $L^2(E)$. This concludes the proof.

\end{proof}

\subsection{Mixing}

\label{ssection:mixing}

We now prove the second part of Theorem \ref{theorem:brin}. We start with the following:
\begin{lemma}
\label{lemma:mixing-cover}
Let $M$ be a smooth manifold endowed with a volume-preserving Anosov flow $(\varphi_t)_{t \in \R}$ and further assume it is mixing. Then the lift of $(\varphi_t)_{t \in \R}$ to any finite cover of $M$ is mixing.
\end{lemma}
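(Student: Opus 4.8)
The plan is to prove the contrapositive: if the lifted flow $(\widetilde\varphi_t)_{t\in\R}$ on a finite cover $p:\widetilde M\to M$ fails to be mixing, then $(\varphi_t)_{t\in\R}$ fails to be mixing on $M$. Note first that $(\widetilde\varphi_t)_{t\in\R}$ is again a volume-preserving Anosov flow on the closed manifold $\widetilde M$, hence transitive and ergodic, so the Anosov alternative (Appendix~\ref{appendix}) applies to it: it is not mixing if and only if it is the suspension of an Anosov diffeomorphism by a constant roof function, and in that case it carries a smooth map $u:\widetilde M\to\Ss^1$ with $\widetilde X u=i\lambda u$ for some $\lambda\in\R\setminus\{0\}$ (equivalently, a nonzero $L^2$ eigenfunction, which is smooth by Proposition~\ref{proposition:regularity} and, since $\widetilde X|u|^2=0$ forces $|u|$ to be constant by ergodicity, may be normalized so that $|u|\equiv1$).

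Before exploiting this I would reduce to the case of a \emph{Galois} (regular) cover. Given an arbitrary finite cover $p:\widetilde M\to M$, let $\widehat M\to M$ be the finite Galois cover corresponding to the normal core of $p_*\pi_1(\widetilde M)$ in $\pi_1(M)$; it factors through $\widetilde M$. Mixing descends along covers: if $q:\widehat M\to\widetilde M$ is a covering of degree $d$, then $q^*$ is $\sqrt d$ times an isometric embedding $L^2(\widetilde M)\hookrightarrow L^2(\widehat M)$, it preserves zero-average functions and intertwines the two lifted flows, whence $C^{\widetilde M}_t(u,v)=d^{-1}C^{\widehat M}_t(q^*u,q^*v)$ for all $u,v$; consequently, if the flow on $\widehat M$ is mixing then so is the one on $\widetilde M$. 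Thus it suffices to treat the case where $p:\widetilde M\to M$ is Galois with deck group $G$.

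In that case, assume $(\widetilde\varphi_t)_{t\in\R}$ is not mixing and take $u:\widetilde M\to\Ss^1$ as above. Every $g\in G$ commutes with $\widetilde\varphi_t$, so $\widetilde X(g^*u)=i\lambda\,g^*u$ as well, and the key device is the product over the sheets
\[
P:=\prod_{g\in G}g^*u\ \in\ C^\infty(\widetilde M,\Ss^1),
\]
which is visibly $G$-invariant and, by the Leibniz rule for $\widetilde X$, satisfies $\widetilde X P=i\,|G|\lambda\,P$. Crucially $|P|\equiv1$, so $P$ is nowhere zero — this is precisely why one uses the product rather than the push-forward $p_*u=\sum_{g}g^*u$, which could vanish. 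Hence $P$ descends to $\bar P\in C^\infty(M,\Ss^1)$ with $X_M\bar P=i\,|G|\lambda\,\bar P$ and $|G|\lambda\ne0$.

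Finally, such a $\bar P$ obstructs mixing of $(\varphi_t)_{t\in\R}$ on $M$: integrating $X_M\bar P=i|G|\lambda\bar P$ against the invariant measure gives $\int_M\bar P\,\dd\mu_M=0$, while $\varphi_t^*\bar P=e^{i|G|\lambda t}\bar P$ yields $C_t(\bar P,\bar P)=e^{i|G|\lambda t}\vol(M)\not\to0$. This contradicts the mixing hypothesis on $M$ and proves the contrapositive, hence the lemma. The argument involves no hard analysis: the only inputs are the Anosov alternative (Appendix~\ref{appendix}) — together with Proposition~\ref{proposition:regularity} if one prefers the $L^2$ formulation — and the single point requiring care is the reduction to Galois covers, since one genuinely needs the product (not the sum) over the sheets in order to manufacture a nowhere-vanishing flow-eigenfunction on $M$.
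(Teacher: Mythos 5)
Your proof is correct, but it goes through a genuinely different mechanism than the one in the paper. The paper also argues by contraposition via the Anosov alternative, but on the \emph{geometric} side: if the lifted flow is a constant-roof suspension, then $\mathbb{E}^s\oplus\mathbb{E}^u$ is integrable upstairs, integrability is local so it descends to $M$, the compact strong stable-unstable leaves of the cover project onto those of $M$ (which are therefore compact), and one concludes by the proof of Plante's theorem that $(\varphi_t)$ is itself a constant-roof suspension. You instead work on the \emph{spectral} side of the alternative (Remark \ref{remark:imp}): you take the unit-modulus eigenfunction $u$ with $\widetilde X u=i\lambda u$, $\lambda\neq 0$, produced by non-mixing upstairs, and push it down by the multiplicative transfer $\prod_g g^*u$, which is exactly the right device since the additive transfer could vanish. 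All the steps check out: deck transformations commute with the lifted flow because they preserve the lifted vector field, the Leibniz rule gives the eigenvalue $i|G|\lambda\neq0$ for the product, $|P|\equiv 1$ guarantees $\bar P\neq 0$, and $\int_M\bar P\,\dd\mu_M=0$ follows from integrating the eigenvalue equation against the invariant volume. The ingredients you use (Proposition \ref{proposition:regularity}, ergodicity of the lifted flow on the connected cover, and the eigenvalue characterization of mixing) are all available in the paper, so the argument is self-contained; what it buys is that you never need Plante's compact-leaf theorem, at the cost of the Galois reduction. One small simplification you could make: the reduction to Galois covers is actually unnecessary, because for an arbitrary finite cover the fiberwise product (norm map) $N(x):=\prod_{y\in p^{-1}(x)}u(y)$ is already a well-defined smooth unit-modulus function on $M$ satisfying $X_M N=id\lambda N$, since $\widetilde\varphi_t$ permutes the fibers of $p$; your Galois detour is harmless but can be skipped.
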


\begin{proof}
Let $\pi : \widehat{M} \to M$ be a finite cover and assume that $(\widehat{\varphi}_t)_{t \in \R}$, the lift of $(\varphi_t)_{t \in \R}$, is not mixing. Since it is also volume-preserving (for the lifted measure), we get by the Anosov alternative (see \cite{Anosov-67,Plante-72} or Appendix \ref{appendix}) that it is the suspension of an Anosov diffeomorphism by a constant roof function. In particular, $\mathbb{E}_{\widehat{M}}^s \oplus \mathbb{E}_{\widehat{M}}^u$, the sum of the stable and unstable bundles on $\widehat{M}$, is integrable and since this is a local property, it is also the case for $\mathbb{E}_M^s \oplus \mathbb{E}_M^u$ on $M$. Let $x \in M$, $\widehat{x} \in \widehat{M}$ be a lift of $x$ to $\widehat{M}$, $W^{su}_M(x)$ (resp. $W^{su}_{\widehat{M}}(\widehat{x})$) be the leaf passing through $x$ (resp. $\widehat{x}$) and tangent to $\mathbb{E}_M^s \oplus \mathbb{E}_M^u$. Note that $W^{su}_{\widehat{M}}(\widehat{x})$ is compact. Since $\pi(W^{su}_{\widehat{M}}(\widehat{x}))$ and $W^{su}_M(x)$ coincide near $x$ and are tangent to $\mathbb{E}_M^s \oplus \mathbb{E}_M^u$, they must coincide everywhere, that is $\pi(W^{su}_{\widehat{M}}(\widehat{x})) = W^{su}_M(x)$ and thus $W^{su}_M(x)$ is compact. In turn, by the proof of \cite[Theorem 1.8]{Plante-72}, this implies that $(\varphi_t)_{t \in \R}$ is a suspension with constant roof function, thus contradicting mixing on $M$.
\end{proof}

For all $\mathfrak{p} \in H \backslash F_0$, recall that the fiber of the bundle $Q(\mathfrak{p}) \to M$ is independent of $\mathfrak{p}$ and diffeomorphic to some closed manifold $Q_{0}$, independent of $\mathfrak{p}$ (as a differentiable manifold). 

\begin{lemma}
\label{lemma:mix}
 If the fiber of $Q(\mathfrak{p})$ is not the total space of a Riemannian submersion over the circle $\Ss^1_\theta := \R_\theta/\ell \Z$ (for some $\ell > 0$, equipped with the measure $d\theta^2$), and $(\varphi_t)_{t \in \R}$ is mixing, then the restriction of $(\Phi_t)_{t \in \R}$ to $Q(\mathfrak{p})$ is also mixing. 
\end{lemma}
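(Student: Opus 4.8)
The plan is to reduce mixing of $(\Phi_t)_{t \in \R}|_{Q(\mathfrak{p})}$ to a spectral statement about the vertical Laplacians on the fibers $Q_x$, exactly paralleling the ergodicity proof of Lemma~\ref{lemma:ergodicity}, and then to invoke the theory of Pollicott-Ruelle resonances to upgrade from ergodicity to decay of correlations. Concretely, suppose $(\Phi_t)_{t \in \R}|_Q$ is not mixing. Since it is volume-preserving and ergodic, the standard dynamical dichotomy says non-mixing forces the existence of a nonconstant eigenfunction: there is $\lambda \in \R \setminus \{0\}$ and $f \in L^2(Q)$, $f \neq 0$, with $X_Q f = i\lambda f$. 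Decomposing $f = \sum_{k \geq 0} f_k$ in vertical Fourier modes as in Lemma~\ref{lemma:ergodicity}, each $f_k \in L^2(M, \mc{E}^{(k)}_Q)$ satisfies $\X_k f_k = i\lambda f_k$, so by Proposition~\ref{proposition:regularity} each $f_k$ is smooth.

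The key step is then to analyze the modes. For $k > 0$: the section $f_k$ is a smooth flow-equivariant section, so by (the $\lambda$-twisted version of) Proposition~\ref{proposition:isomorphism} applied to the operator $\X_k - i\lambda$, its value $f_k(x_\star) \in \mc{E}^{(k)}_{Q,x_\star} \subset C^\infty(Q_{x_\star})$ is fixed (up to the scalar factor coming from $\lambda$) by the induced representation, hence — since $H$ acts transitively on $Q_{x_\star}$ — is a constant function on $Q_{x_\star}$; being an eigenfunction of $\Delta_{x_\star}$ with eigenvalue $\lambda_k > 0$, it vanishes. Thus $f = f_0$, i.e. $f$ is constant on each fiber $Q_x$ and descends to $L^2(M)$, giving an eigenfunction of $X_M$ with eigenvalue $i\lambda \neq 0$; but $(\varphi_t)$ is mixing, hence has no nonzero point spectrum, contradiction. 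Wait — this argument as stated would prove mixing of $(\Phi_t)|_Q$ \emph{outright}, with no need for the fibering hypothesis, which cannot be right (recall remark (3) with $M \times \Ss^1$). The gap is that a non-mixing volume-preserving ergodic flow need \emph{not} have $L^2$ eigenfunctions at nonzero frequency in general; that implication uses additional structure. So the correct route is through the resonance spectrum of $X_Q$ on anisotropic spaces: $(\Phi_t)|_Q$ fails to mix iff $X_Q$ has a Pollicott-Ruelle resonance on the imaginary axis other than $0$ (this is the analytic characterization of mixing, cf. the references to Dyatlov-Zworski, Tsujii, etc.). Such a resonance comes with a \emph{resonant state} (a distribution, not an $L^2$ function) $u$ with $X_Q u = i\lambda u$ and wavefront set in $E_u^*$; one again decomposes in vertical modes, notes that the regularity theory forces each mode to be more regular, and runs the representation-theoretic argument. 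The surviving $k = 0$ mode descends to a resonant state of $X_M$ on $M$ at frequency $i\lambda$; since $(\varphi_t)$ is mixing this is impossible for $\lambda \neq 0$. The fibering-over-the-circle hypothesis then must enter precisely to rule out the following: an $H$-invariant but nonconstant datum on $Q_{x_\star}$ that is \emph{not} killed by the transitivity, namely when $Q_\star$ itself fibers over $\Ss^1$ one can build a flow-equivariant $\Ss^1$-valued function on $Q$ whose winding produces a genuine eigenvalue. So the actual argument should be: either all resonances on $i\R$ are at $0$ (done), or there is one at $i\lambda \neq 0$, and by the mode analysis the obstruction lives entirely in the $k>0$ modes and forces $Q_\star$ to admit a Riemannian submersion onto $\Ss^1$ (the eigenfunction, suitably normalized, has constant-norm differential realizing the submersion), contradicting the hypothesis.

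The main obstacle I anticipate is getting the mode analysis in the $k > 0$ case to correctly account for the fibering alternative: one must show that a flow-equivariant smooth section $f_k$ of $\mc{E}^{(k)}_Q$ satisfying $\X_k f_k = i\lambda f_k$ with $f_k \neq 0$ forces $\lambda$ to be "of circle type" relative to $Q_\star$. Concretely, $f_k(x_\star)$ is a Laplace eigenfunction on $Q_{x_\star}$ on which $H$ acts by pullback, and $H$-transitivity forces $|f_k(x_\star)|$ to be constant; writing $f_k = r e^{i\theta}$ with $r$ constant, the phase $\theta : Q_{x_\star} \to \Ss^1$ must be $H$-equivariant for the central character, and $\Delta \theta = $ const together with $|d\theta|$ forced constant by equivariance exhibits $\theta$ as (a multiple of) a Riemannian submersion $Q_\star \to \Ss^1$ — this is the crux and needs the transitive isometric action to pin down $|d\theta|$. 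One also needs Lemma~\ref{lemma:mixing-cover} somewhere, presumably to pass to a finite cover trivializing a holonomy obstruction (the transitivity group $H$ may be disconnected, and $H/H^\circ$-quotients correspond to finite covers on which the residual circle-valued function becomes globally defined). I would carry this out in the order: (1) state the analytic mixing criterion and set up the resonant state; (2) vertical Fourier decomposition and regularity via Proposition~\ref{proposition:regularity}; (3) the $k>0$ analysis producing either vanishing or the submersion $Q_\star \to \Ss^1$; (4) the $k=0$ reduction to $M$ and the contradiction with mixing of $(\varphi_t)$, using Lemma~\ref{lemma:mixing-cover} to handle disconnectedness of $H$; (5) assemble.
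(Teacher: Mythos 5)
Your geometric crux is exactly the paper's: for $k>0$, an $L^2$ (hence, by Proposition \ref{proposition:regularity}, smooth) solution of $\X_k u = i\lambda_0 u$ evaluates at $x_\star$ to a Laplace eigenfunction $u_\star$ on $Q_{x_\star}$ whose modulus is $H$-invariant, hence constant by transitivity of $H$ on $Q_{x_\star}$; normalizing $|u_\star|=1$, the identity $0=\Delta|u_\star|^2 = 2\lambda_k|u_\star|^2-2|\nabla u_\star|^2$ pins $|\nabla u_\star|$ to a nonzero constant and exhibits $u_\star : Q_\star \to \Ss^1$ as a Riemannian submersion, contradicting the hypothesis. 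Your treatment of the $k=0$ mode via the finite cover $Q'\to M$ and Lemma \ref{lemma:mixing-cover} also matches the paper. The genuine gap is in your analytic reduction. You talk yourself out of the $L^2$-eigenfunction route and pivot to ``resonant states of $X_Q$ on the imaginary axis,'' but $X_Q$ is only partially hyperbolic and no Pollicott--Ruelle theory for it is set up (or available) in this framework; moreover, even granting such a criterion, your ``(done)'' in ``either all resonances on $i\R$ are at $0$ (done)'' conceals the actual work: absence of nonzero imaginary point spectrum for a unitary flow gives at best weak mixing (the spectral measure could still be singular continuous), not mixing.

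The paper's resolution is to decompose \emph{first} and then do spectral theory mode by mode: each $\X_k$ is a skew-adjoint derivation over the \emph{Anosov} base $M$, so its resolvents $\mathbf{R}_\pm(z)$ extend meromorphically past the imaginary axis on anisotropic spaces; Stone's formula then shows the spectral measure of $\X_k$ is absolutely continuous with respect to Lebesgue once imaginary $L^2$-eigenvalues are excluded, and the Riemann--Lebesgue lemma gives $\langle e^{t\X_k}f_k,g_k\rangle\to 0$; dominated convergence in $k$ assembles these mode-wise decays into mixing on $Q$. In particular, your original instinct was correct once the decomposition is done: for each fixed $k$ the obstruction to decay of the $k$-th correlation is precisely an $L^2$-eigenvalue of $\X_k$ on $i\R$, and these are exactly what the geometric argument kills. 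You should replace your steps (1)--(2) by this per-mode spectral argument; your steps (3)--(5) then essentially coincide with the paper's proof.
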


In particular, if $Q_0$ is not the total space of a fiber bundle over the circle, the conclusions of Lemma \ref{lemma:mix} apply. Here and below, $\mathbf{1}_{E}$ (resp. $\mathbf{1}_Q$) denotes the constant function equal to $1$ on $E$ (resp. on $Q$).

\begin{proof}
Let $f,g \in L^2(Q)$ such that $\langle f, \mathbf{1}_Q\rangle_{L^2} = 0$. Note that, writing $f = \sum_{k \geq 0} f_k, g = \sum_{k \geq 0} g_k$ as before, where $g_k,f_k \in C^\infty(M,\E^{(k)}_Q)$, the condition $\langle f, \mathbf{1}_E\rangle_{L^2} = 0$ is equivalent to $\langle f_0, \mathbf{1}_Q \rangle_{L^2} = 0$. We then have
\[
|\langle e^{tX_Q} f, g \rangle_{L^2}| = \left|\sum_{k \geq 0} \langle e^{t \X_k} f_k, g_k \rangle_{L^2}\right| \leq \sum_{k \geq 0} \|f_k\|_{L^2}\|g_k\|_{L^2} \leq \|f\|_{L^2(E)} \|g\|_{L^2(E)}.
\]
By Lebesgue's dominated convergence, in order to show that $|\langle e^{tX_Q} f, g \rangle_{L^2}| \to 0$ as $t \to +\infty$, it suffices to show that for each $k \in \Z_{\geq 0}$, $|\langle e^{t \X_k} f_k, g_k \rangle_{L^2}| \to 0$.

First of all, we claim that for $k \neq 0$, the operator $\X_k$ is \emph{mixing} on $\E^{(k)}_Q \to M$ in the sense that, given $f_k, g_k \in L^2(M,\mc{E}^{(k)}_Q)$, one has $\langle e^{t\X_k} f_k, g_k \rangle \to_{t \to \infty} 0$. Let us first show that $\X_k$ has no $L^2$-eigenvalues on the imaginary axis. Assume that $i \lambda_0$ is an $L^2$-eigenvalue for $\X_k$, that is, there exists $u \in L^2(M,\E^{(k)}_Q)$ such that $\|u\|_{L^2}=1$ and $\X_k u = i \lambda_0 u$. By Proposition \ref{proposition:regularity}, we get that $u$ is smooth. Given $x \in M, y \in W^s_M(x)$ and considering the holonomy map $(\mathrm{Hol}^s_{x \to y})^* : \mc{E}^{(k)}_{Q,y} \to \mc{E}^{(k)}_{Q,x}$ introduced in \eqref{equation:holonomy} (the action is by pullback of functions), we have that $(\mathrm{Hol}^s_{x \to y})^*u = u$. Note that the $i \lambda_0$ does not appear as it cancels from the transport formula \eqref{equation:holonomy}. We now write $u_\star := u(x_\star) \in \mc{E}^{(k)}_{Q,x_\star}$. As a consequence, given a homoclinic orbit $\gamma$ and using Definition \ref{definition:representation}, we get that $\rho^{(k)}(\gamma)u_\star = \rho(\gamma)^* u_\star = e^{-i \lambda_0 T_{\gamma}} u_\star$, where $\rho^{(k)}$ is the induced representation on $\mc{E}^{(k)}_Q$ (by pullback of functions), $T_\gamma \in \R$ is the time such that $x_2 = \varphi_{T_\gamma}(x_1)$ in Definition \ref{definition:representation}. Taking an arbitrary point $w \in Q_\star$, we observe that $|u_\star(w)| = |u_\star(\rho(\gamma)w)|$ and since $u_\star$ is smooth on $Q_\star$ and $H = \overline{\rho(\mathbf{G})}$ acts transitively on the fiber $Q_{\star}$, we get that $|u_\star|$ is constant. In order to conclude, we can use the following:

\begin{lemma}
Let $(N,g)$ be a closed Riemannian manifold. If there exists a non-constant Laplace eigenfunction of constant modulus, then $(N,g)$ is a Riemannian submersion over the circle.
\end{lemma}

\begin{proof}
Let $u \in C^\infty(N)$ be such that $\Delta_g u = \lambda u$ for some $\lambda > 0$ and assume that $|u|=1$. Then $\Delta |u|^2 = 0 = 2 \lambda |u|^2 - 2|\nabla u|^2$, that is, $|\nabla u| = \lambda^{1/2}$. Hence $u : (N,g) \to (\R/2\pi\Z, \lambda^{-1} d\theta^2)$ is a Riemannian submersion. Note that $(\R/2\pi\Z,\lambda^{-1} d\theta^2)$ is isometric to $(\R/(2\pi\lambda^{-1/2})\Z,d\theta^2)$.
\end{proof}

Hence, by the previous lemma, $u_\star : Q_{\star} \to \Ss^1$ is a Riemannian submersion over the circle and this contradicts the assumptions. Thus $i\lambda_0$ is not an $L^2$-eigenvalue. \\

The operator $\X_k$ is skew-adjoint on $L^2(M,\mc{E}^{(k)}_Q)$. Since it has no $L^2$-eigenvalues on the imaginary axis and admits a spectrum of resonances on anisotropic spaces (i.e. its resolvents $\mathbf{R}_{\pm}(z)$ defined by \eqref{equation:resolvent} admit a meromorphic extension from $\left\{\Re(z) > 0\right\}$ to some half-space $\left\{ \Re(z) > -C s \right\}$ when acting on anisotropic Sobolev spaces $\mc{H}^s_\pm$), we get by Stone's formula (see \cite[Section 5.2]{Cekic-Lefeuvre-20} for further details) that its spectral measure $\dd P(\lambda)$, for $\lambda \in \R$, is absolutely-continuous with respect to the Lebesgue measure and given by
\[
\dd P (\lambda) = -\dfrac{1}{2\pi} \left(\mathbf{R}_+(-i\lambda)+\mathbf{R}_-(i\lambda) \right) \dd \lambda.
\]
Moreover, we have:
\[
\begin{split}
\langle e^{t \X_k} f_k, g_k \rangle_{L^2} & = \int_{-\infty}^{+\infty} e^{i t \lambda} \langle \dd P(\lambda) f_k, g_k \rangle \\
& = - \dfrac{1}{2\pi} \int_{-\infty}^{+\infty} e^{i t \lambda} \langle \left(\mathbf{R}_+(-i\lambda)+\mathbf{R}_-(i\lambda) \right)  f_k, g_k \rangle \dd \lambda = - (2\pi)^{-1} \widehat{F}(-t),
\end{split}
\]
where $F(\lambda) =  \langle \left(\mathbf{R}_+(-i\lambda)+\mathbf{R}_-(i\lambda) \right)  f_k, g_k \rangle$ and $\widehat{\bullet}$ is the Fourier transform. The function $F$ is in $L^1(\R,\dd \lambda)$ by the spectral Theorem (see \cite[Chapter 6, Section 5.3]{Kato-95}) so we get by the Riemann-Lebesgue Theorem that $\widehat{F}(-t) \to 0$ as $t \to \infty$. This argument translating the absence of $L^2$-eigenvalues into mixing is fairly classical, see \cite[Theorem VII.15]{Reed-Simon-80} for instance. \\

It remains to deal with $k=0$. We claim that the only $L^2$-eigenvalue for $\X_0$ is $0$ and corresponds to the one-dimensional eigenspace of constant functions. Indeed, assume that $i \lambda_0$ is an $L^2$-eigenvalue for $\X_0$ and $u \in L^2(M,\E^{(0)}_Q)$ satisfies $\X_0 u = i \lambda_0 u$. Proposition \ref{proposition:regularity} yields that $u$ is smooth and by definition, for all $x \in M$, $u(x,\bullet)$ is constant fiberwise on each connected component of $Q_{x}$ because it is in the kernel of the Laplacian. If the fiber $Q_{\star}$ was connected, $u$ would define equivalently a function $\widetilde{u}$ on the base $M$, satisfying $X_M \widetilde{u} = i \lambda_0 \widetilde{u}$ and we could directly apply the mixing assumption of the flow $(\varphi_t)_{t \in \R}$ to obtain the claimed result. In the general case, defining the equivalence relation for $w,w' \in Q$, $w \sim w'$ if and only if $w$ and $w'$ belong to the same fiber in $E$ and to the same connected component of the fiber $Q$, and setting $Q' := Q/\sim$, we observe that $\pi : Q' \to M$ is a finite connected cover of $M$\footnote{The cover $Q' \to M$ is not necessarily Galois. A sufficient condition for it to be Galois is that $H/H_0$ is Abelian, where $H_0$ is the connected component of the identity in $H$. In this case, it is possible to define a global $H/H_0$ action on $Q'$. It is also Galois when $E$ is a principal bundle, see \S\ref{ssection:principal}.}. The fact that $Q'$ is connected follows from the fact that $(\Phi_t)_{t \in \R}$ is ergodic, hence transitive on $Q$ (i.e. it admits a dense orbit) so $Q$ is connected. The vector field $X_E$ induces a transitive Anosov vector field $X_{Q'}$ on $Q'$ which projects via $\pi$ to $X_M$. Since the smooth function $u$ is constant on each connected component of the fibers of $Q$, it yields a smooth function $\widetilde{u}$ on $Q'$ such that $X_{Q'} \widetilde{u} = i \lambda_0 \widetilde{u}$. By Lemma \ref{lemma:mixing-cover}, the lifted flow on any finite cover of $M$ is mixing, so we get that $\lambda_0 = 0$ and $\widetilde{u}$ is constant, that is, $u$ is constant.

As a consequence, on $L^2(M,\mc{E}^{(0)}_Q)^\bot := L^2(M,\mc{E}^{(0)}_Q) \cap (\C \cdot \mathbf{1}_Q)^\bot$, we get similarly to the case $k\neq0$ that the flow is mixing, namely for $f_0 \in L^2(M,\mc{E}^{(0)}_Q)^\bot, g_0 \in L^2(M,\mc{E}^{(0)}_Q)$, we have $\langle e^{t \X_0} f_0,g_0 \rangle_{L^2} \to 0$. This concludes the proof.
\end{proof}

\begin{remark}
\label{remark:imp}
The same proof also shows that $X_M$ is mixing if and only if there are no non-zero $L^2$-eigenvalues on the imaginary axis.
\end{remark}

The proof of Theorem \ref{theorem:brin} now boils down to gathering the previous Lemmas.

\begin{proof}[Proof of Theorem \ref{theorem:brin}]
Part (a) follows from \S\ref{ssection:ergodicity}, more precisely, Lemmas \ref{lemma:riem}, \ref{lemma:measure-0}, \ref{lemma:ergodicity}, \ref{lemma:isomorphism}. Part (b) follows from \S\ref{ssection:mixing}, Lemma \ref{lemma:mix}.
\end{proof}

We conclude by a simple counter-example when $Q_\star$ fibers over the circle (actually \emph{is} the circle). Given a smooth manifold $M$ equipped with a volume-preserving Anosov flow $(\varphi_t)_{t \in \R}$, we define the bundle $E := M \times \Ss^1$ (where $\Ss^1 = \R/2\pi \Z$) and the flow $\Phi_t(x,\theta) := (\varphi_t(x), \theta + t \text{ mod } 2\pi)$.

\begin{lemma}
\label{lemma:circle}
The flow $(\Phi_t)_{t \in \R}$ is ergodic on $E$ and, in this case, all these sets are equal: $F = \Ss^1 = Q_\star = H$. However, $(\Phi_t)_{t \in \R}$ is not mixing.
\end{lemma}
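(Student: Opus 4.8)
The plan is to compute directly with the explicit generator, which on $E = M \times \Ss^1$ is $X_E = X_M + \partial_\theta$ (differentiate $\Phi_t$ at $t=0$), together with the fibrewise Fourier decomposition of $L^2(E)$.

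\emph{Ergodicity.} Write $f = \sum_{n \in \Z} f_n(x)\,e^{in\theta} \in \ker_{L^2}X_E$ with $f_n \in L^2(M,\dd\mu_M)$; then $X_E f = 0$ is equivalent to $X_M f_n = -in f_n$ for every $n$. The mode $n=0$ gives $f_0$ constant by ergodicity of $(\varphi_t)$ on $M$. For $n\neq 0$ one needs $X_M$ to have no nonzero $L^2$-eigenvalue on the imaginary axis, which is exactly the mixing of $(\varphi_t)$ by Remark \ref{remark:imp}; hence $f_n=0$ and $f$ is constant. I expect this to be the only delicate point, and it is where the hypothesis genuinely enters: if $(\varphi_t)$ were the suspension of an Anosov diffeomorphism by a constant roof $c$ with $c/2\pi\in\Q$, then $e^{i(\theta - \frac{2\pi}{c}s)}$ (with $s$ the suspension coordinate) would be a nonconstant $\Phi_t$-invariant $L^2$-function — so a mixing-type assumption on $(\varphi_t)$ really is needed, and under it $(\Phi_t)$ is ergodic.

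\emph{The four objects.} By definition $F = \Ss^1$. To identify $H$ I would equip the trivial bundle $M\times\Ss^1\to M$ with the flat Ehresmann connection, for which parallel transport preserves the $\theta$-coordinate: then the approximating sequence in \eqref{equation:holonomy} is \emph{stationary} and equal to $\mathrm{id}_{\Ss^1}$, so the stable and unstable holonomies are trivial, while $\mathrm{Hol}^c_{x_1\to x_2} = \Phi_T$ (with $x_2 = \varphi_T x_1$) is the rotation of angle $T$. Thus every $\rho(\gamma)$ is a rotation and $H = \overline{\rho(\mathbf{G})}\leqslant \mathrm{SO}(2) = \Ss^1$. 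Since $(\Phi_t)$ is ergodic, the ergodicity part of Theorem \ref{theorem:brin} forces $H$ to act transitively on $F=\Ss^1$; as every proper closed subgroup of $\mathrm{SO}(2)$ is finite cyclic and hence non-transitive, this yields $H=\Ss^1$. Then $H\backslash F$ is a point, and for any $p$ the principal orbit is $Q_{x_\star}=H\cdot p=\Ss^1=F$, so indeed $F=\Ss^1=Q_{x_\star}=H$.

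\emph{Failure of mixing.} This is immediate: take $f=g=e^{i\theta}\in L^2(E)$, which has zero $\mu_E$-average. Then
\[
C_t(f,g) = \int_E (\Phi_t^* f)\,\overline{g}\,\dd\mu_E = \int_E e^{i(\theta+t)}\,e^{-i\theta}\,\dd\mu_E = e^{it}\,\mu_E(E) = 2\pi\,\vol(M)\,e^{it},
\]
which has constant modulus $2\pi\,\vol(M)>0$, so $C_t(f,g)\not\to 0$ as $t\to\infty$ and $(\Phi_t)$ is not mixing. (Equivalently, $e^{i\theta}$ is an $L^2$-eigenfunction of $X_E$ with eigenvalue $i\neq 0$, which obstructs mixing by the principle behind Remark \ref{remark:imp}.) Every step other than the ergodicity input above is an elementary computation.
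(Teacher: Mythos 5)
Your proof is correct, and the non-mixing part coincides with the paper's (the eigenfunction $e^{i\theta}$ with eigenvalue $i$). For ergodicity, however, you take a genuinely different route. The paper works on the group side: it observes $H\leqslant \Ss^1$, assumes $H$ finite for contradiction, deduces that the holonomy $e^{iT}$ of every closed orbit of period $T$ lands in a discrete lattice $\Gamma\subset\R$, and then invokes the Parry--Pollicott orbit-counting asymptotic to produce a period missing $\Gamma$; ergodicity then follows from the ``if'' direction of Theorem \ref{theorem:brin}(a). You instead prove ergodicity directly by the fibrewise Fourier decomposition $f=\sum_n f_n e^{in\theta}$, reducing $X_Ef=0$ to $X_Mf_n=-inf_n$ and killing the modes $n\neq 0$ by the spectral characterization of mixing (Remark \ref{remark:imp}); you then recover $H=\Ss^1$ from the ``only if'' direction of the theorem. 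Both arguments are legitimate, and in fact they need the same hypothesis: your route uses mixing of $(\varphi_t)$ explicitly, while the paper's route uses it implicitly, since the quoted asymptotic for the number of closed orbits with length in $[L,L+\eps]$ holds for (topologically weak-)mixing Anosov flows and fails for constant-roof suspensions, where the length spectrum itself sits in a lattice. Your observation that the statement genuinely fails without this hypothesis --- e.g.\ for a suspension with roof $c\in 2\pi\Q$, where $e^{in\theta}e^{-2\pi i n s/c}$ with $2\pi n/c\in\Z$ gives a nonconstant invariant function --- is a worthwhile clarification of the lemma as stated; what your approach buys is that it makes this dependence on mixing transparent, whereas the paper's group-theoretic argument buys consistency with the general framework of the rest of the section (everything is reduced to the transitivity group).
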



\begin{proof}
First of all, the flow $(\Phi_t)_{t \in \R}$ cannot be mixing since the smooth function $f(x,\theta) := e^{i\theta}$ satisfies $X_E f = i f$, $\langle f,\mathbf{1}_E\rangle_{L^2}=0$ and thus the correlation
\[
C_t(f,f) := \int_{E} f \circ \Phi_t \cdot \overline{f} ~\dd \mu_E = e^{it} \|f\|_{L^2}^2,
\]
does not converge to $0$. Let us now show ergodicity. Let $H \leqslant \Ss^1$ be the transitivity group. Then $H$ is either equal to $\Ss^1$, in which case the flow is ergodic, or $H$ is finite. Let us show that the latter is impossible. Indeed, if it were the case, then by the first part of Theorem \ref{theorem:brin} we would get a $\Z_k$-bundle over $M$ for some integer $k \in \Z_{\geq 0}$ and the holonomy along every closed orbit $\gamma$ in $M$ would be given by $e^{2i \pi p_\gamma/k}$ for some $p_\gamma \in \left\{0,...,k-1\right\}$. Now, if $T$ is the period of a closed orbit, the holonomy is given by $e^{i T}$ so it suffices to find a closed orbit with length $T$ such that $T \notin (2\pi/k) \Z$. By \cite{Parry-Pollicott-90}, the number of closed orbits in the window $[2\pi n + 1/(3k), 2\pi n + 2/(3k)]$ grows exponentially in $n$, so there is a $n_0$ large enough such that there exists at least one closed orbit whose length is contained in that interval. This provides a contradiction.
\end{proof}

\subsection{The case of principal bundles}

\label{ssection:principal}

We now apply Theorem \ref{theorem:brin} to principal bundles. We let $\pi : P \to M$ be a principal $G$-bundle, where $G$ is a compact Lie group, with right action $R_g : P \to P$ for $g \in G$, equipped with an extension $(\Phi_t)_{t \in \R}$ of the flow $(\varphi_t)_{t \in \R}$ i.e. such that
\begin{equation}
\label{equation:lift-principal}
\forall t \in \R, \forall g \in G, \qquad \pi \circ \Phi_t = \varphi_t \circ \pi, \quad \Phi_t \circ R_g = R_g \circ \Phi_t.
\end{equation}
Each fiber of $P$ is (non-canonically) isometric to $G$. The holonomy formula \eqref{equation:holonomy} can be replaced by a more natural choice. Indeed, we can consider on $P$ an arbitrary principal $G$-connection, and then $\tau_{x \to y} : P_x \to P_y$ (for $x$ close enough to $y$) is defined as the parallel transport along the unique geodesic joining $x$ to $y$ with respect to that connection. Since it is a principal $G$-connection, parallel transport commutes with the right-action, namely $R_g \circ \tau_{x \to y} = \tau_{x \to y} \circ R_g$ for all $g \in G$ and $x,y \in M$ close enough, see \cite[Proposition 3.2]{Kobayashi-Nomizu-96}. By this choice of parallel transport operator, we directly see from \eqref{equation:holonomy} that the stable/unstable holonomies commute with the right-actions, namely if $x \in M, y \in W^{s,u}_M(x)$, then
\begin{equation}
\label{equation:commutation-holonomy}
R_g \circ \mathrm{Hol}^{s,u}_{x \to y} = \mathrm{Hol}^{s,u}_{x \to y} \circ R_g.
\end{equation}

We let $P_\star := P_{x_\star}$ be the fiber over an arbitrary periodic point $x_\star$ used to define homoclinic orbits. By Definition \ref{definition:transitivity}, the transitivity group $H$ is a subgroup of the isometry group $\mathrm{Isom}(P_\star)$ of the fiber $P_\star$. Nevertheless, following Remark \ref{remark:choice}, it is easier in practice to identify the fiber $P_\star$ with the group $G$ itself: for that, we fix an arbitrary element $w_\star \in P_\star$ and then consider the map $\Psi : G \to P_\star, g \mapsto R_g w_\star$. By this identification, for $\gamma \in \mc{H}$, $\Psi^{-1} \rho(\gamma) \Psi$ acts as an isometry of $G$ and by \eqref{equation:commutation-holonomy} it commutes with the right action on $G$ (by itself) so it is a left action on $G$ (by itself) and can thus be identified with an element of the group $G$, namely $\Psi^{-1} \rho(\gamma) \Psi = L_g$ for some $g \in G$. We then define 
\[
H_{w_\star} := \overline{\left\{ g \in G ~|~ \exists \gamma \in \mathbf{G}, L_g = \Psi^{-1} \rho(\gamma) \Psi \right\}}.
\]
In other words, the groups $H_{w_\star} \leqslant G$ and $H \leqslant \mathrm{Isom}(P_\star)$ are simply conjugate by the map $\Psi$.
The group $H_{w_\star}$ is a closed subgroup of the compact Lie group $G$, hence a Lie group. Note that changing the point $w_\star \in P_\star$ by another point $w_\star'$, one gets another subgroup $H_{w_\star'}$ that is conjugate to $H_{w_\star}$ in $G$. In order to simplify notations, we will simply write $H_\star := H_{w_\star} \leqslant G$. Also note that, in the case of a principal bundle, the set of non-principal (or \emph{singular}) points is empty and the quotient space $H_\star \backslash G$ is a smooth manifold.

\begin{corollary}
\label{corollary:principal}
There exists a smooth principal $H_{\star}$-bundle $Q \to M$ such that $w_\star \in Q$, $Q \subset P$ is a flow-invariant subbundle and the restriction of $(\Phi_t)_{t \in \R}$ to $Q$ is ergodic. More generally, one has:
\[
\ker_{L^2} X_P \xrightarrow{\sim} L^2(H_\star \backslash G).
\]
In particular, the principal bundle $P$ admits a reduction of the structure group to $H_{\star}$. If $H_{\star} = G$, then the flow $(\Phi_t)_{t \in \R}$ is ergodic. If $(\varphi_t)_{t \in \R}$ is mixing and $G$ is semisimple, then $(\Phi_t)_{t \in \R}$ is also mixing.
\end{corollary}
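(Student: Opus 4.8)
The plan is to deduce Corollary~\ref{corollary:principal} from Theorem~\ref{theorem:brin} applied to the Riemannian fiber bundle $\pi:P\to M$, whose typical fiber is $F\simeq P_\star$, simply by re-reading the objects it produces in principal-bundle language. The first step is to identify the transitivity group. Under the isometry $\Psi:G\xrightarrow{\sim}P_\star$, $g\mapsto R_g w_\star$, I would show that each $\rho(\gamma)$ becomes a left translation: by \eqref{equation:lift-principal} and \eqref{equation:commutation-holonomy} the central holonomy $\Phi_t$ and the stable/unstable holonomies commute with the right $G$-action, so $\rho(\gamma)$ is a right-equivariant isometry of $P_\star$, hence a left translation in the trivialization given by $w_\star$; passing to closures (using that $g\mapsto L_g$ embeds $G$ as a closed subgroup of $\mathrm{Isom}(G)$) yields $H=\Psi\,H_\star\,\Psi^{-1}$. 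Since $H_\star$ then acts on $F\simeq G$ by left translation, this action is \emph{free}, so every point of $F$ is a principal point (we have $F_0=F$, with all isotropy groups and all slice representations trivial), the $H$-orbits are exactly the right cosets $H_\star g$, the orbit space $H\backslash F=H_\star\backslash G$ is a smooth manifold, and the model fiber $Q_\star$ of Theorem~\ref{theorem:brin} is diffeomorphic to $H_\star$.

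Next I would fix $\mathfrak{p}\in H_\star\backslash G$ to be the coset of the identity, so that $w_\star\in Q_{x_\star}(\mathfrak{p})$, and set $Q:=Q(\mathfrak{p})$. The lemmas established in the proof of Theorem~\ref{theorem:brin}(a) already give that $Q\subset P$ is a smooth flow-invariant subbundle over $M$ with fiber diffeomorphic to $H_\star$, that $\pi|_Q:Q\to M$ is onto, and (Lemma~\ref{lemma:ergodicity}) that the restriction of $(\Phi_t)_{t\in\R}$ to $Q$ is ergodic. To upgrade ``smooth subbundle with fiber $H_\star$'' to ``smooth principal $H_\star$-subbundle'' I would check that $R_h$ preserves $Q$ for every $h\in H_\star$: over homoclinic points this follows from the defining formula \eqref{equation:qcirc} together with the commutation relations \eqref{equation:lift-principal}--\eqref{equation:commutation-holonomy} and the identity $H_\star h=H_\star$, and over arbitrary points of $M$ it follows from density of homoclinic orbits (Lemma~\ref{lemma:density}) and continuity of $R_h$. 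On $Q_{x_\star}(\mathfrak{p})=\{R_g w_\star:g\in H_\star\}$ the action $R_g w_\star\mapsto R_{gh}w_\star$ is free and transitive, and it is transported to every other fiber by the holonomy isometries, so $Q\to M$ is a smooth principal $H_\star$-bundle; since $\pi|_Q$ is onto, $Q$ realizes a reduction of the structure group of $P$ from $G$ to $H_\star$.

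For the remaining assertions, the isometry $\ker_{L^2}X_P\xrightarrow{\sim}L^2(H_\star\backslash G)$ — with natural measure $\nu=\vol(M)\,(\mathrm{pr}_\star)_*\mu_G$, where $\mu_G$ is the Riemannian measure on the fiber — is simply the inverse of the map $\Psi$ of \eqref{equation:psi} (Lemma~\ref{lemma:isomorphism}) read through the identification $F\simeq G$. By Theorem~\ref{theorem:brin}(a), $(\Phi_t)_{t\in\R}$ is ergodic on $P$ if and only if $H=H_\star$ acts transitively on $F=G$, which happens precisely when $H_\star=G$. Finally, if $(\varphi_t)_{t\in\R}$ is mixing and $H_\star=G$, then $Q(\mathfrak{p})=P$ and its model fiber is $Q_\star=G$; a compact semisimple Lie group has finite fundamental group \cite[Corollary 3.9.4]{Duistermaat-Kolk-00}, hence none of its connected components fibers over $\Ss^1$, so $G$ is not the total space of a fiber bundle over the circle, and Theorem~\ref{theorem:brin}(b) gives that $(\Phi_t)_{t\in\R}$ is mixing on $P$.

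The one point that is not a mechanical translation of Theorem~\ref{theorem:brin} and the lemmas used in its proof — hence the main thing to get right — is the identification in the first step of the abstract transitivity group $H\leqslant\mathrm{Isom}(P_\star)$ with $H_\star\leqslant G$ acting by left translations, together with the verification that the right $H_\star$-action preserves $Q$ and is simply transitive on its fibers; this is exactly where the principal structure and the compatibility of every holonomy map with the right $G$-action are used in an essential way, everything else being formal.
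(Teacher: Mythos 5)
Your proposal is correct and follows essentially the same route as the paper: deduce everything from Theorem \ref{theorem:brin} after identifying $H$ with $H_\star$ acting by left translations (so all points are principal and $Q_\star\simeq H_\star$), and then verify the one non-formal point — that the right $H_\star$-action preserves $Q$ — first on the fiber over $x_\star$ and then globally via the commutation of the holonomies with the right $G$-action, exactly as in the paper. The mixing claim is also handled identically, via finiteness of $\pi_1(G)$ for semisimple $G$.
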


Before proving Corollary \ref{corollary:principal}, we formulate a remark in the specific case where $P$ is a frame bundle:

\begin{remark}
Let $\E \to M$ be a smooth Hermitian/real vector bundle of rank $r$ over $M$ and consider an isometric extension $(\Phi_t)_{t \in \R}$ of $(\varphi_t)_{t \in \R}$ to $\E$. Let $P := F \mc{E}$ be the principal frame bundle associated to $\E$ which is a principal $\mathrm{U}(r)$-bundle (or a $\mathrm{SO}(r)$-bundle in the real case). Parry's representation $\rho$ can then be seen as a representation $\rho : \mathbf{G} \to \mathrm{U}(\E_{x_\star})$ (or $ \mathbf{G} \to \mathrm{SO}(\E_{x_\star})$ in the real case) and the transitivity group $H$ is therefore intrinsically defined as a subgroup $H \leqslant \mathrm{U}(\E_{x_\star})$ (or $H \leqslant \mathrm{SO}(\E_{x_\star})$). However, in practice, it is often easier to identify $\mathrm{U}(\E_{x_\star})$ with $\mathrm{U}(r)$, and $H$ with a closed subgroup of $\mathrm{U}(r)$, but this involves some arbitrary choice (an isometry $\C^r \to \E_{x_\star}$) and this shows that $H$, seen as a subgroup of $\mathrm{U}(r)$, is only well-defined up to conjugacy.
\end{remark}

\begin{proof}[Proof of Corollary \ref{corollary:principal}]
The fact that $Q \to M$ is a fiber bundle with fiber isometric to $H_\star$, and that the restriction of $(\Phi_t)_{t \in \R}$ to $Q$ is ergodic, is a direct consequence of Theorem \ref{theorem:brin}. The only point to check is that the induced right-action of $H_\star$ on $Q$ preserves $Q$ (in other words, that $Q$ is indeed equipped with a right-action of $H_\star$ on its fibers and is therefore a principal bundle). On $P_\star \cap Q$, the group $H \leqslant \mathrm{Isom}(P_\star)$ acts on the left, that is $Q \cap P_\star = \left\{ h w_\star ~|~ h \in H \right\}$. Let $\widetilde{h} \in H_\star \leqslant G$ be such that $L_{\widetilde{h}} = \Psi^{-1} h \Psi$ for some $h \in H$. Thus $\widetilde{h} = L_{\widetilde{h}}e_G = \Psi^{-1} h \Psi e_G = \Psi^{-1} h w_\star$. Given an arbitrary point $w = h' w_\star \in Q \cap P_\star$ in the orbit, we then have
\[
R_{\widetilde{h}} w =  h' R_{\widetilde{h}} w_\star = h' \Psi \widetilde{h} = h' \Psi \Psi^{-1} h w_\star = h' h w_\star \in Q \cap P_\star,
\]
so the right action is well-defined on $Q \cap P_\star$. In order to define it on $Q$, it suffices to consider for $x \in M$ an arbitrary isometry $\mc{I}_x : P_x \to P_{x_\star}$ obtained by taking holonomies along $us$- and flow-paths; then for $\widetilde{h} \in H_\star$, using the commutation relation \eqref{equation:commutation-holonomy} and the fact that $\mc{I}_x$ maps $Q_x \to Q_\star$, we obtain that for all $w \in Q \cap P_x$:
\[
R_{\widetilde{h}} w = R_{\widetilde{h}} \mc{I}_x^{-1} \mc{I}_x w = \mc{I}_x^{-1} R_{\widetilde{h}} \mc{I}_x w \in Q.
\]
(Indeed, $\mc{I}_x w \in Q \cap P_\star$, and $R_{\widetilde{h}} \mc{I}_x w \in Q \cap P_\star$ by the previous discussion, and thus $ \mc{I}_x^{-1} R_{\widetilde{h}} \mc{I}_x w \in Q$.) This proves that $Q$ is a principal $H_\star$-bundle. Thus $P$ admits a reduction of its structure group to $H_\star$.

For the last part of the statement relative to mixing, it suffices to observe that $G$ cannot fiber over $\Ss^1$ when $G$ is semisimple. Indeed, since $G$ is connected, this would imply by the exact homotopy sequence that $\pi_1(G)$ surjects onto $\Z$ but compacntess and semisimplicity implies that its fundamental group is finite \cite[Corollary 3.9.4]{Duistermaat-Kolk-00}.
\end{proof}

If $H$ is the transitivity group of $(\Phi_t)_{t \in \R}$, we let $H_0$ be its connected component. We conclude this note by a short lemma showing that, up to taking the finite cover $\widehat{M} := Q/H_0$ of the manifold $M$ (with deck transformation group $H/H_0$), the transitivity group can always be assumed to be connected and given by $H_0$, the connected component of the identity in $H$. Note that, by construction, $Q$ defines a principal $H_0$-bundle over $\widehat{M}$ that we will denote by $\widehat{P}$.

\begin{lemma}
On the finite cover $\widehat{M} := Q/H_0$ of $M$, the extension of the flow $(\Phi_t)_{t \in \R}$ to the bundle $\widehat{P} \to \widehat{M}$ has transitivity group $H_0$. 
\end{lemma}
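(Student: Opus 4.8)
The plan is to build $\widehat{M}$ directly out of the reduced bundle furnished by Corollary \ref{corollary:principal}. First I would invoke that corollary to obtain a flow-invariant principal $H$-subbundle $Q \subset P$ with $w_\star \in Q$ on which $(\Phi_t)_{t \in \R}$ is ergodic; since an ergodic flow has a dense orbit, $Q$ is connected. The identity component $H_0$ is a closed normal subgroup of the compact group $H$, so it acts freely and properly on $Q$; hence $\widehat{M} := Q/H_0$ is a smooth closed connected manifold, $Q \to \widehat{M}$ is a principal $H_0$-bundle, and $\widehat{M} \to M$ is a covering of degree $|H/H_0| < \infty$ on which the residual right action of $H/H_0$ (descending from the right $H$-action on $Q$) is free and transitive on fibers, i.e. a Galois cover with deck group $H/H_0$.

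Next I would transport the dynamics. Because $(\Phi_t)_{t\in\R}$ commutes with the right $G$-action on $P$, its restriction to $Q$ commutes with the right $H$-action, hence descends to a flow $(\widehat{\varphi}_t)_{t\in\R}$ on $\widehat{M}$; as the covering map $\widehat{M} \to M$ intertwines it with $(\varphi_t)_{t\in\R}$, it is the lift of $(\varphi_t)_{t\in\R}$ to the finite cover, hence a volume-preserving Anosov flow. Let $\widehat{P} \to \widehat{M}$ be the pullback of $P \to M$ along the covering map, with the lifted isometric extension $\widehat{\Phi}_t(\hat{x}, w) := (\widehat{\varphi}_t\hat{x}, \Phi_t w)$. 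The assignment $q \mapsto ([q], q)$ (viewing $[q] \in Q/H_0 = \widehat{M}$ and $q$ as a point of $P$) embeds $Q$ into $\widehat{P}$ as a closed subbundle which is simultaneously $H_0$-equivariant for the right actions, flow-invariant, and invariant under stable, unstable and central holonomies (the holonomies of $\widehat{P}$ are the lifts of those of $P$, and $Q \subset P$ is holonomy-invariant since it is one of the flow-invariant subbundles $Q(\mathfrak{p})$); moreover $(\widehat{\Phi}_t)_{t\in\R}$ restricted to it is ergodic, being identified with $(\Phi_t)_{t\in\R}|_Q$.

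Finally I would identify the transitivity group $\widehat{H}$ of $(\widehat{P}, \widehat{\Phi}_t)$. Take the periodic point $\hat{x}_\star := [w_\star] \in \widehat{M}$ (periodic because $\Phi_{T_\star} w_\star = w_\star \cdot g_\star$ with $g_\star \in H$, whose class in $H/H_0$ has finite order) and use $w_\star$ to identify $\widehat{P}_{\hat{x}_\star} \simeq G$, under which $Q_{\hat{x}_\star}$ becomes the subgroup $H_0$. Since the holonomies of $\widehat{P}$ restrict to those of the invariant subbundle $Q$, Parry's representation $\widehat{\rho}$ of $\widehat{P}$ preserves the coset $H_0$, so each $\widehat{\rho}(\gamma)$ is left translation by an element of $H_0$ and $\widehat{H} \subseteq H_0$; conversely, the restriction of $\widehat{\rho}$ to $Q_{\hat{x}_\star} \simeq H_0$ is exactly Parry's representation of the principal $H_0$-bundle $Q \to \widehat{M}$, whose transitivity group must equal all of $H_0$ because ergodicity of $(\Phi_t)_{t\in\R}|_Q$ forces $\ker_{L^2} X_Q = \C\mathbf{1}$, hence $L^2(\text{(transitivity group)}\backslash H_0) = \C$ by Corollary \ref{corollary:principal} applied to $Q \to \widehat{M}$. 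Thus $\widehat{H} = H_0$, which is the claim.

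I expect the main obstacle to be the bookkeeping in the last two paragraphs: checking that pulling back the bundle and the flow genuinely turns $Q$ into a flow-invariant, holonomy-invariant principal $H_0$-subbundle of $\widehat{P}$, and that restricting every stable, unstable and central holonomy of $\widehat{P}$ to this subbundle restricts Parry's representation, so that the two inclusions $\widehat{H} \subseteq H_0$ and $\widehat{H} \supseteq H_0$ both come out of the single object $Q$. The remaining points — freeness and properness of the $H_0$-action, connectedness of $\widehat{M}$, that the descended flow is a volume-preserving Anosov lift, that $\widehat{M} \to M$ is Galois with deck group $H/H_0$ — are routine once $Q$ is in hand.
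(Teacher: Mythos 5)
Your proof is correct, and for the crucial inclusion $H_0\subseteq\widehat{H}$ it takes a genuinely different route from the paper. Both arguments begin identically: $\widehat{M}:=Q/H_0$ with deck group $H/H_0$, and the inclusion $\widehat{H}\subseteq H_0$ comes from the fact that holonomies of $\widehat{P}$ preserve the embedded copy of $Q$ (in the paper's language, homoclinic orbits of $\widehat{M}$ project to homoclinic orbits of $M$ whose holonomy lies in $H_0$). For the reverse inclusion the paper argues dynamically: it takes a word $g$ with $\rho(g)=h_0\in H_0$, lifts it to a concatenation of orbit segments running between the lifts of $x_\star$, and invokes the shadowing lemma to produce genuine homoclinic orbits $\widehat{\gamma}_n$ on $\widehat{M}$ with $\widehat{\rho}(\widehat{\gamma}_n)\to h_0$, concluding by closedness of $\widehat{H}$. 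You instead observe that $Q\to\widehat{M}$ is itself a principal $H_0$-bundle carrying an ergodic isometric extension of the lifted Anosov flow, so the ergodicity--transitivity correspondence (the isometry $\Psi$ of the main theorem, via Corollary \ref{corollary:principal} applied to $Q\to\widehat{M}$) forces its transitivity group to be all of $H_0$; since the holonomies of $\widehat{P}$ restrict on the invariant subbundle $Q$ to the holonomies of $Q$, and the reference point $\hat w_\star$ lies in $Q$, the two Parry representations produce literally the same group elements, whence $\widehat{H}=H_0$. Your argument is softer and avoids the shadowing construction entirely (and handles arbitrary $H/H_0$ without the paper's reduction to the two-component case), at the price of the bookkeeping you identify: one must verify that $\widehat{M}$ is a connected closed manifold with a volume-preserving Anosov lift, that $Q\hookrightarrow\widehat{P}$ is a flow- and holonomy-invariant principal $H_0$-subbundle, and that restriction of holonomies intertwines the two representations. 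All of these checks go through (stable/unstable holonomies are canonical, so they restrict to invariant subbundles; the paper's Step~2 already gives holonomy-invariance of $Q$ inside $P$), and there is no circularity since the main theorem and Corollary \ref{corollary:principal} precede this lemma. The paper's shadowing argument buys a more self-contained, purely dynamical proof; yours buys brevity and uniformity by reusing the $L^2$ machinery already established.
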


\begin{proof}
The transitivity group is contained in $H_0$ by construction but the point is to show that it is precisely equal to $H_0$. For that, let $w_\star \in P_\star, \mathfrak{p} = \mathrm{pr}(w_\star)$ and $Q(\mathfrak{p})$ be flow-invariant principal $H$-bundle containing $w_\star$. For the sake of simplicity, we write $Q = Q(\mathfrak{p})$. Since the restriction of the flow to $Q$ is transitive, the bundle $Q \to M$ is a connected principal $H$-bundle and thus $Q/H_0 \to M$ is a finite cover of $M$ with deck transformation group $H/H_0$. (Connectedness of $Q$ follows from the same argument as the one given in the proof of Lemma \ref{lemma:mix}.) We denote it by $\widehat{M}$. It remains to show that the lifted flow $(\widehat{\Phi}_t)_{t \in \R}$ on $\widehat{P}$ over $\widehat{M}$ has transitivity group equal to $H_0$. For the sake of simplicity, let us assume that $H$ has two connected components that is $H/H_0 = \Z_2$, the general case being handled similarly. We write $H_0$ for the identity component of $H$, $H_1$ for the other component. Let $x_1$ and $x_2$ be the two lifts of $x_\star$ to $\widehat{M}$.
Let $\gamma$ be a homoclinic orbit to $x_\star$ on $M$. Then either $\rho(\gamma) \in H_0$ or $\rho(\gamma) \in H_1$. In the first case, $\rho(\gamma)$ maps $H_0$ to $H_0$, in the second case it maps $H_0$ to $H_1$. We let $\widehat{\gamma}$ be the lift of $\gamma$ on $\widehat{M}$, starting at $x_1$. This lift may or may not go through $x_2$, depending on the value of $\rho(\gamma)$. If $\rho(\gamma) \in H_0$, then $\widehat{\gamma}$ is a homoclinic orbit (based in $x_1$) on $\widehat{M}$. If $\rho(\gamma) \in H_1$, then $\widehat{\gamma}$ is an orbit homoclinic to $x_1$ in the past and to $x_2$ in the future. Conversely, if $\widehat{\gamma}$ is a homoclinic orbit to $x_1$ on $\widehat{M}$, then it projects to $\gamma = \pi(\widehat{\gamma})$, a homoclinic orbit to $x_\star$ on $M$ and we have $\rho(\gamma) \in H_0$ by the previous argument.

Let $\rho(g) = h_0 \in H_0$, where $g = \gamma_{p}^{k_p}...\gamma_{1}^{k_1} \in \mathbf{G}$, $\gamma_1,...,\gamma_p \in \mc{H}$, on $M$, and consider the lift of $g$ to $\widehat{M}$ with $\gamma_{1}$ starting at $x_1$. This (non-compact) curve on $\widehat{M}$ is a concatenation of various (non-compact) orbits in $\widehat{M}$, some of them joining $x_1$ (in the past) to $x_2$ (in the future), or vice-versa, or $x_1$ to itself, or $x_2$ to itself. However, the first curve starts at $x_1$ (by assumption) and the last curves ends also at $x_1$ because $\rho(g) \in H_0$.

For the sake of simplicity, let us assume that $g = \gamma_{2} \gamma_{1}$ and $\widehat{g} = \widehat{\gamma_{2}} \widehat{\gamma_{1}}$ where $\widehat{\gamma_{1}}$ connects $x_1$ to $x_2$ and $\widehat{\gamma_{2}}$ connects $x_2$ to $x_1$ (the general argument follows similarly by concatenation). We want to find a sequence of genuine homoclinic orbits $\widehat{\gamma}(n)$ to $x_1$ (i.e. converging in the past and in the future to $x_1$) such that $\widehat{\rho}(\widehat{\gamma}(n)) \to h_0 \in H_0$. For that, we will simply connect $\widehat{\gamma}_{1}$ to $\widehat{\gamma}_{2}$ via the shadowing lemma. By construction, there exist points
\[
p_1^- \in W^u_{\widehat{M}}(x_1) \cap \widehat{\gamma}_1, \quad p_1^+ \in W^s_{\widehat{M}}(x_2) \cap \widehat{\gamma}_1, \quad  p_2^- \in W^u_{\widehat{M}}(x_2) \cap \widehat{\gamma}_2, \quad p_2^+ \in W^s_{\widehat{M}}(x_1) \cap \widehat{\gamma}_1,
\]
such that
\[
\widehat{\rho}(\widehat{g}) = \mathrm{Hol}^s_{p_2^+ \to x_1} \circ \mathrm{Hol}^c_{p_2^- \to p_2^+} \circ  \mathrm{Hol}^{u}_{x_2 \to p_2^-} \circ \mathrm{Hol}^s_{p_1^+ \to x_2} \circ \mathrm{Hol}^c_{p_1^- \to p_1^+} \circ \mathrm{Hol}^u_{x_1 \to p_1^-}.
\]
By \eqref{equation:holonomy}, we have:
\[
\mathrm{Hol}^{u}_{x_2 \to p_2^-} \circ \mathrm{Hol}^s_{p_1^+ \to x_2} = \lim_{t \to + \infty}\widehat{\Phi}_t \circ \tau_{\widehat{\varphi}_{-t}x_2 \to \widehat{\varphi}_{-t}p_2^-} \circ \widehat{\Phi}_{-t} \circ \widehat{\Phi}_{-t} \circ \tau_{\widehat{\varphi}_t p_1^+ \to \widehat{\varphi}_t x_2} \circ \widehat{\Phi}_t,
\]
where $(\widehat{\varphi}_t)_{t \in \R}$ is the flow on $\widehat{M}$.

We let $T_\star$ be the period of the periodic orbit $x_\star$ (on $M$). Note that $\widehat{\gamma}_\star$, the lift of $\gamma_\star$ starting at $x_1$, may have period $T_\star$ or $2T_\star$ according to the value of $\rho(\gamma_\star)$. Consider a subsequence $k_n T_\star$ such that $\widehat{\Phi}_{k_n T_\star}(x_2) \in \mathrm{Isom}(\widehat{P}_{x_2})$ converges to the identity $\mathbf{1}_{\widehat{P}_{x_2}}$ as $n \to \infty$ (and $\widehat{\varphi}_{k_n T_\star}x_2 = x_2$). Then
\[
\mathrm{Hol}^{u}_{x_2 \to p_2^-} \circ \mathrm{Hol}^s_{p_1^+ \to x_2} = \lim_{n \to + \infty} \widehat{\Phi}_{k_n T_\star} \circ \tau_{\widehat{\varphi}_{-k_n T_\star}x_2 \to \widehat{\varphi}_{-k_n T_\star}p_2^-} \circ \tau_{\widehat{\varphi}_{k_n T_\star} p_1^+ \to \widehat{\varphi}_{k_n T_\star} x_2} \circ \widehat{\Phi}_{k_n T_\star}.
\]
The points $\widehat{\varphi}_{k_n T_\star}(p_1^+)$ and $\widehat{\varphi}_{- k_n T_\star}(p_2^-)$ are $\mc{O}(e^{-\lambda k_n T_\star})$-close for some uniform exponent $\lambda > 0$. We let $(x_1;\widehat{\varphi}_{k_n T_\star}(p_1^+)]$ and $[\widehat{\varphi}_{- k_n T_\star}(p_2^-) ; x_1)$ be the half-orbits. By the shadowing lemma, we can concatenate the half-orbits $(x_1;\widehat{\varphi}_{k_n T_\star}(p_1^+)] \sqcup [\widehat{\varphi}_{- k_n T_\star}(p_2^-) ; x_1)$, that is there exists a genuine homoclinic orbit $\widehat{\gamma}(n)$ to $x_1$ which is $\mc{O}(e^{-\lambda k_n T_\star})$ close to the union of orbits $(x_1;\widehat{\varphi}_{k_n T_\star}(p_1^+)] \sqcup [\widehat{\varphi}_{- k_n T_\star}(p_2^-) ; x_1)$. We thus get that the holonomy along $\widehat{\gamma}(n)$ is almost the same as the holonomy along the concatenated orbits $ \widehat{\gamma_{2}} \widehat{\gamma_{1}}$, namely $h_0 \widehat{\rho}(\widehat{\gamma}(n))^{-1} = \mathbf{1}_{\widehat{P}_{x_1}} + \mc{O}(e^{-\lambda k_n T_\star})$, that is $\widehat{\rho}(\widehat{\gamma}(n)) \to h_0$. (In the affine case, see \cite[Lemmas 3.13, 3.14]{Cekic-Lefeuvre-21-1} where this convergence is justified via the Ambrose-Singer formula. A similar argument works in the principal bundle case.). Since $\widehat{H}$ is closed, this shows that $H_0$ is a subgroup of $\widehat{H}$ and thus $\widehat{H} = H_0$.
\end{proof}

\appendix

\section{On volume-preserving Anosov flows}

\label{appendix}

For the sake of completeness, we give in this appendix short proofs of the well-known facts that volume-preserving Anosov flows are ergodic and that they are mixing if and only if they are not the suspension of an Anosov diffeomorphism by a constant roof function. In the case of Anosov diffeomorphisms, similar arguments involving microlocal analysis can be found in \cite[Section 6]{Faure-Roy-Sjostrand-08}, see also \cite[Section 7.2]{Baladi-18}. \\

Let $M$ be a smooth closed connected manifold and $X_M \in C^\infty(M,TM)$ be a smooth vector field generating an Anosov flow $(\varphi_t)_{t \in \R}$ preserving a smooth measure $\mu_M$.

\begin{lemma}
The flow $(\varphi_t)_{t \in \R}$ is ergodic with respect to $\mu_M$.
\end{lemma}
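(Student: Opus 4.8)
The plan is to sidestep the Hopf argument altogether and deduce ergodicity directly from the microlocal regularity already in hand. What must be shown is that every $u \in L^2(M,\dd\mu_M)$ with $X_M u = 0$ (distributionally) is constant. First I would upgrade such a $u$ to a smooth function using Proposition~\ref{proposition:regularity}, and then conclude by a soft argument.

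\emph{Regularity.} Since $M$ is compact, $u \in L^1(M)$, so Proposition~\ref{proposition:regularity}, applied with $\mc{E}$ the trivial line bundle $M\times\C$, with $\X = X_M$ acting on functions, and with $\lambda = 0$, yields $u \in C^\infty(M)$. The hypotheses of \S\ref{ssection:invariant} are immediate in this case: $X_M$ is a derivation along its own flow, and the propagator $e^{tX_M}\colon f\mapsto f\circ\varphi_t$ acts fiberwise by linear isometries of $\C$. All the analytic content lives in this step — it is precisely the anisotropic-space and meromorphic-continuation argument already carried out for Proposition~\ref{proposition:regularity} — so there is nothing new to do here.

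\emph{Constancy.} Splitting into real and imaginary parts, it suffices to treat a real smooth $u$ with $X_M u = 0$, i.e. a continuous function that is constant along every orbit of $(\varphi_t)_{t\in\R}$. There are two clean ways to finish. \textbf{(i)} A volume-preserving Anosov flow is topologically transitive: by Poincaré recurrence $\mu_M$-a.e. point is recurrent, so the non-wandering set is dense and (being closed) equals $M$, and the spectral decomposition of Anosov flows together with connectedness of $M$ then forces $M$ to be a single transitive basic set, in particular it contains a point with dense orbit (see \cite{Anosov-67,Plante-72}); a continuous function constant along a dense orbit is globally constant. \textbf{(ii)} Alternatively, staying entirely inside the present framework: by Step~1 together with the finite-dimensionality of the resonant space at $0$ used to prove Proposition~\ref{proposition:regularity}, $\ker_{L^2} X_M$ is a finite-dimensional commutative unital $*$-subalgebra of $C(M)$; hence it is the linear span of the indicators of a finite partition of $M$ into flow-invariant sets which, being smooth, are clopen, and connectedness of $M$ forces a single set, i.e. $\ker_{L^2} X_M = \C\cdot\mathbf{1}$. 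Either way $u$ is constant, which is ergodicity.

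The only genuine obstacle is the regularity step, and it has already been overcome in Proposition~\ref{proposition:regularity}; the passage from smoothness to constancy is soft, the subtlety being merely to invoke a fact (transitivity, or finite-dimensionality of the kernel) that is logically prior to — and not a disguised form of — ergodicity itself.
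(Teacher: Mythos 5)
Your proof is correct, and the regularity step coincides with the paper's: apply Proposition \ref{proposition:regularity} to the trivial line bundle to upgrade $u\in\ker_{L^2}X_M$ to a smooth function. Where you diverge is the soft step. The paper finishes directly: a smooth flow-invariant $u$ is constant along orbits, hence (by continuity and the contraction \eqref{equation:anosov}) along each stable and unstable leaf, so the local product structure makes $u$ locally constant and connectedness of $M$ concludes. Your route (i) instead quotes topological transitivity of volume-preserving Anosov flows (nonwandering set equal to $M$ plus the spectral decomposition); this is non-circular, since transitivity is a topological theorem established independently of ergodicity, but the black box you invoke is itself proved by essentially the local-product-structure argument the paper writes out, so you gain concision at the cost of self-containedness. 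Your route (ii) is genuinely different and arguably closer to the paper's stated aim of bypassing Hopf-type arguments by spectral means: $\ker_{L^2}X_M$ consists of smooth, hence bounded, functions, is stable under products and complex conjugation, and embeds into the finite-dimensional resonant space $\ker\X|_{\mc{H}^s_+}$, so it is a finite-dimensional conjugation-stable unital subalgebra of $C(M)$; since $C(M)$ has no nonzero nilpotents this algebra is reduced, hence isomorphic to $\C^n$, its minimal idempotents are continuous $\{0,1\}$-valued functions, i.e.\ indicators of clopen sets partitioning $M$, and connectedness forces $n=1$. The only point there worth spelling out is semisimplicity, which as just noted is automatic for subalgebras of $C(M)$. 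Either variant is a complete proof; route (ii) has the additional virtue of using nothing about the hyperbolic structure beyond the finite-dimensionality of the resonant space at $0$.
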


\begin{proof}
Let $u \in L^2(M)$ such that $X_M u = 0$. By Proposition \ref{proposition:regularity}, we get that $u$ is smooth. Hence $u$ is constant along all orbits and stable/unstable manifolds. Given $x \in M$, by the local product structure \cite[Proposition 6.2.2]{Fisher-Hasselblatt-19}, any point in a neighborhood of $x$ can be joined by taking flow- and $us$-paths, so $u$ is locally constant hence constant since $M$ is connected.
\end{proof}

Still in the volume-preserving case, the following is known as the Anosov alternative \cite{Anosov-67,Plante-72}:

\begin{lemma}
The flow $(\varphi_t)_{t \in \R}$ is not mixing if and only if it is the suspension of an Anosov diffeomorphism on a closed connected manifold by a constant roof function.
\end{lemma}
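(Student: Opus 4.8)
The plan is to reduce both implications to Remark \ref{remark:imp}, which says that the volume-preserving Anosov flow $(\varphi_t)_{t\in\R}$ is mixing if and only if $X_M$ has no non-zero $L^2$-eigenvalue on the imaginary axis. So it suffices to show that $X_M$ carries such an eigenvalue exactly when $(\varphi_t)_{t\in\R}$ is a suspension by a constant roof function.

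For the easy direction, suppose $(\varphi_t)_{t\in\R}$ is the suspension of an Anosov diffeomorphism $f:N\to N$ of a closed connected manifold by a constant roof function $c>0$, so that $M\cong (N\times\R)/\!\sim$ with $(x,s+c)\sim(fx,s)$ and $X_M=\partial_s$. Then $h(x,s):=e^{2\pi i s/c}$ descends to a well-defined smooth function on $M$ with $X_M h=\tfrac{2\pi i}{c}h$, exhibiting $\tfrac{2\pi i}{c}\neq 0$ as an $L^2$-eigenvalue of $X_M$ on the imaginary axis; by Remark \ref{remark:imp} the flow is not mixing (equivalently, $C_t(h,h)=e^{2\pi i t/c}\|h\|_{L^2}^2\not\to 0$).

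For the converse, if $(\varphi_t)_{t\in\R}$ is not mixing then Remark \ref{remark:imp} provides $\lambda_0\in\R\setminus\{0\}$ (we may assume $\lambda_0>0$ after replacing $u$ by $\bar u$) and $u\in L^2(M)\setminus\{0\}$ with $X_M u=i\lambda_0 u$. First I would upgrade $u$ to a smooth function via Proposition \ref{proposition:regularity}; then $X_M|u|^2=0$, so $|u|$ is constant by ergodicity of $(\varphi_t)_{t\in\R}$, and after rescaling $u$ becomes a smooth map $M\to\Ss^1$. Since $du(X_M)=X_M u=i\lambda_0 u$ vanishes nowhere, $u$ is a submersion, hence — $M$ being compact — a locally trivial fibration over $\Ss^1$ with $X_M$ everywhere transverse to the fibres (Ehresmann). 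Integrating the eigenvalue equation gives $u(\varphi_t x)=e^{i\lambda_0 t}u(x)$ for all $x,t$; thus every orbit meets $N:=u^{-1}(1)$, and for $x\in N$ one has $\varphi_t x\in N$ iff $e^{i\lambda_0 t}=1$, i.e. $t\in\tfrac{2\pi}{\lambda_0}\Z$. Hence $N$ is a global cross-section with \emph{constant} first-return time $\tau:=2\pi/\lambda_0$ and first-return map $P:=\varphi_\tau|_N$, and $(M,(\varphi_t)_{t\in\R})$ is the suspension of $(N,P)$ with constant roof function $\tau$. The map $P$ is an Anosov diffeomorphism: the $d\varphi_\tau$-invariant splitting $TM=\R X_M\oplus\mathbb{E}^{s}_M\oplus\mathbb{E}^{u}_M$ projects along $\R X_M$ onto a $dP$-invariant hyperbolic splitting of $TN$ — the classical statement that the Poincaré return map of an Anosov flow on a cross-section is Anosov, cf. \cite{Fisher-Hasselblatt-19, Hasselblatt-Katok-95}. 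Finally, if $N$ is disconnected then, $M$ being connected, $P$ cyclically permutes its connected components $N_1,\dots,N_k$; replacing $(N,P,\tau)$ by $(N_1,\,P^k|_{N_1},\,k\tau)$ realizes $(\varphi_t)_{t\in\R}$ as the suspension of an Anosov diffeomorphism of the closed connected manifold $N_1$ by the constant roof function $k\tau$, as required.

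The only step that is not a routine verification, once the identity $u(\varphi_t x)=e^{i\lambda_0 t}u(x)$ is in hand, is the assertion that $P$ inherits hyperbolicity from the flow; I expect to dispatch this by invoking the standard correspondence between Anosov flows admitting a global cross-section and suspensions of Anosov diffeomorphisms rather than reproving it.
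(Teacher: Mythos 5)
Your proposal is correct and follows essentially the same route as the paper: both directions reduce to the $L^2$-eigenvalue characterization of mixing, and the converse proceeds by upgrading the eigenfunction $u$ to a smooth unimodular map $M\to\Ss^1$ via Proposition \ref{proposition:regularity} and ergodicity, whose level sets form a global cross-section with constant return time $2\pi/\lambda_0$. Your treatment is if anything slightly more explicit than the paper's on two minor points (that $u$ is a submersion because $du(X_M)\neq 0$, and the passage to a connected component of the fibre with the iterated return map $P^k$), but the underlying argument is identical.
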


A general statement also holds in the dissipative case, see \cite[Theorem 1.8]{Plante-72} for further details. Following Remark \ref{remark:imp}, a volume-preserving Anosov flow is mixing if and only if there are no $L^2$-eigenvalues on the imaginary axis, except $0$.

\begin{proof}
If the flow is a suspension of an Anosov diffeomorphism with constant roof function, then it is clearly not mixing by the above characterization as there will exist smooth functions in $\ker(X_M-i\lambda)$ (for some $\lambda \in \R \setminus \left\{0\right\}$) of the form $e^{i \lambda \theta}$, where $\theta$ is the $\Ss^1$-variable. Conversely, if it is not mixing, then there exists $u \in L^2(M)$ such that $X u = i\lambda u$, for some $\lambda \in \R \setminus \left\{ 0 \right\}$. By Proposition \ref{proposition:regularity}, $u$ is smooth. Moreover $X|u|^2 = 0$ and thus, by ergodicity (and up to rescaling $u$) we get that $|u|=1$. Hence $u : M \to \Ss^1$ is a smooth map. For any $\theta \in [0,2\pi)$, $N_\theta := u^{-1}(e^{i\theta})$ is a smooth closed submanifold of $M$.

Moreover, $\varphi_t(N_\theta) = N_{\theta + \lambda t}$ so all the manifolds $(N_\theta)_{\theta \in [0,2\pi)}$ are diffeomorphic (and they are all transverse to the flow). We let $F$ be one of the connected components of $N_0$. Given a point $x \in F$ with dense orbit in $M$, there is a time $T > 0$ such that $\varphi_T(x) \in F$. We let $T_0 > 0$ be the smallest positive time such that $\varphi_{T_0}(x) \in F$. Observe that $\varphi_{T_0}(F) = F$ and for all other $y \in F$, $T_0$ is the first positive time such that $\varphi_{T_0}(y)=y$. We can write $T_0 = 2\pi k /\lambda$ for some $k \in \Z \setminus \left\{0\right\}$ and then $N_0 = \bigsqcup_{j=0}^{k-1} \varphi_{2\pi j/\lambda} (F)$ (if $\lambda > 0$) or $N_0 =  \bigsqcup_{j=0}^{-|k|+1} \varphi_{2\pi j/\lambda} (F)$ (if $\lambda < 0$). Setting $f := \varphi_{T_0} : F \to F$ we get that the dynamical system $(M,\varphi_t)$ is conjugate to the flow generated by $\partial_t$ on $F \times [0,T_0]/\sim$, where $(z,T_0) \sim (f(z), 0)$, $z \in F$. It then remains to see that $f$ is Anosov on $F$, but this is immediate as $T F = \mathbb{E}^s_M|_F \oplus \mathbb{E}^u_M|_F$.
\end{proof}

\bibliographystyle{alpha}
\bibliography{Biblio}

\begin{thebibliography}{BPSW01}

\bibitem[AB15]{Alexandrino-Bettiol-15}
Marcos~M. Alexandrino and Renato~G. Bettiol.
\newblock {\em Lie groups and geometric aspects of isometric actions}.
\newblock Springer, Cham, 2015.

\bibitem[AB18]{Adam-Baladi-18}
Alexander {Adam} and Viviane {Baladi}.
\newblock {Horocycle averages on closed manifolds and transfer operators}.
\newblock {\em arXiv e-prints}, page arXiv:1809.04062, September 2018.

\bibitem[Ano67]{Anosov-67}
D.~V. Anosov.
\newblock Geodesic flows on closed {R}iemannian manifolds of negative
  curvature.
\newblock {\em Trudy Mat. Inst. Steklov.}, 90:209, 1967.

\bibitem[Bal18]{Baladi-18}
Viviane Baladi.
\newblock {\em Dynamical zeta functions and dynamical determinants for
  hyperbolic maps}, volume~68 of {\em Ergebnisse der Mathematik und ihrer
  Grenzgebiete. 3. Folge. A Series of Modern Surveys in Mathematics [Results in
  Mathematics and Related Areas. 3rd Series. A Series of Modern Surveys in
  Mathematics]}.
\newblock Springer, Cham, 2018.
\newblock A functional approach.

\bibitem[BG80]{Brin-Gromov-80}
M.~Brin and M.~Gromov.
\newblock On the ergodicity of frame flows.
\newblock {\em Invent. Math.}, 60(1):1--7, 1980.

\bibitem[BK84]{Brin-Karcher-83}
M.~Brin and H.~Karcher.
\newblock Frame flows on manifolds with pinched negative curvature.
\newblock {\em Compositio Math.}, 52(3):275--297, 1984.

\bibitem[BL07]{Butterley-Liverani-07}
Oliver Butterley and Carlangelo Liverani.
\newblock Smooth {A}nosov flows: correlation spectra and stability.
\newblock {\em J. Mod. Dyn.}, 1(2):301--322, 2007.

\bibitem[BP74]{Brin-Pesin-74}
M.~I. Brin and Ja.~B. Pesin.
\newblock Partially hyperbolic dynamical systems.
\newblock {\em Izv. Akad. Nauk SSSR Ser. Mat.}, 38:170--212, 1974.

\bibitem[BP03]{Burns-Pollicott-03}
Keith Burns and Mark Pollicott.
\newblock Stable ergodicity and frame flows.
\newblock {\em Geom. Dedicata}, 98:189--210, 2003.

\bibitem[BPSW01]{Burns-Pugh-Shub-Wilkinson-99}
Keith Burns, Charles Pugh, Michael Shub, and Amie Wilkinson.
\newblock Recent results about stable ergodicity.
\newblock In {\em Smooth ergodic theory and its applications ({S}eattle, {WA},
  1999)}, volume~69 of {\em Proc. Sympos. Pure Math.}, pages 327--366. Amer.
  Math. Soc., Providence, RI, 2001.

\bibitem[Bri75a]{Brin-75-2}
M.~I. Brin.
\newblock Topological transitivity of a certain class of dynamical systems, and
  flows of frames on manifolds of negative curvature.
\newblock {\em Funkcional. Anal. i Prilo\v{z}en.}, 9(1):9--19, 1975.

\bibitem[Bri75b]{Brin-75-1}
M.~I. Brin.
\newblock The topology of group extensions of {$C$}-systems.
\newblock {\em Mat. Zametki}, 18(3):453--465, 1975.

\bibitem[Bri82]{Brin-82}
M.~Brin.
\newblock Ergodic theory of frame flows.
\newblock In {\em Ergodic theory and dynamical systems, {II} ({C}ollege {P}ark,
  {M}d., 1979/1980)}, volume~21 of {\em Progr. Math.}, pages 163--183.
  Birkh\"{a}user, Boston, Mass., 1982.

\bibitem[CL20]{Cekic-Lefeuvre-20}
Mihajlo {Ceki{\'c}} and Thibault {Lefeuvre}.
\newblock {Generic dynamical properties of connections on vector bundles}.
\newblock {\em arXiv e-prints}, page arXiv:2008.09191, August 2020.

\bibitem[CL21]{Cekic-Lefeuvre-21-1}
Mihajlo {Ceki{\'c}} and Thibault {Lefeuvre}.
\newblock {The Holonomy Inverse Problem}.
\newblock {\em arXiv e-prints}, page arXiv:2105.06376, May 2021.

\bibitem[CLMS21]{Cekic-Lefeuvre-Moroianu-Semmelmann-21}
Mihajlo {Ceki{\'c}}, Thibault {Lefeuvre}, Andrei {Moroianu}, and Uwe
  {Semmelmann}.
\newblock {On the ergodicity of the frame flow on even-dimensional manifolds}.
\newblock {\em arXiv e-prints}, page arXiv:2111.14811, November 2021.

\bibitem[DK00]{Duistermaat-Kolk-00}
J.~J. Duistermaat and J.~A.~C. Kolk.
\newblock {\em Lie groups}.
\newblock Universitext. Springer-Verlag, Berlin, 2000.

\bibitem[Dol02]{Dolgopyat-02}
Dmitry Dolgopyat.
\newblock On mixing properties of compact group extensions of hyperbolic
  systems.
\newblock {\em Israel J. Math.}, 130:157--205, 2002.

\bibitem[DZ16]{Dyatlov-Zworski-16}
Semyon Dyatlov and Maciej Zworski.
\newblock Dynamical zeta functions for {A}nosov flows via microlocal analysis.
\newblock {\em Ann. Sci. \'{E}c. Norm. Sup\'{e}r. (4)}, 49(3):543--577, 2016.

\bibitem[FRS08]{Faure-Roy-Sjostrand-08}
Fr\'{e}d\'{e}ric Faure, Nicolas Roy, and Johannes Sj\"{o}strand.
\newblock Semi-classical approach for {A}nosov diffeomorphisms and {R}uelle
  resonances.
\newblock {\em Open Math. J.}, 1:35--81, 2008.

\bibitem[FS11]{Faure-Sjostrand-11}
Fr\'{e}d\'{e}ric Faure and Johannes Sj\"{o}strand.
\newblock Upper bound on the density of {R}uelle resonances for {A}nosov flows.
\newblock {\em Comm. Math. Phys.}, 308(2):325--364, 2011.

\bibitem[FT13]{Faure-Tsuji-13}
Fr\'{e}d\'{e}ric Faure and Masato Tsujii.
\newblock Band structure of the {R}uelle spectrum of contact {A}nosov flows.
\newblock {\em C. R. Math. Acad. Sci. Paris}, 351(9-10):385--391, 2013.

\bibitem[GHK20]{Guillarmou-Kuster-20}
Colin {Guillarmou}, Charles {Hadfield}, and Benjamin {K{\"u}ster}.
\newblock {Spectral theory of the frame flow on hyperbolic 3-manifolds (with an
  appendix by Charles Hadfield)}.
\newblock {\em arXiv e-prints}, page arXiv:2005.08387, May 2020.

\bibitem[GL06]{Gouezel-Liverani-06}
S\'{e}bastien Gou\"{e}zel and Carlangelo Liverani.
\newblock Banach spaces adapted to {A}nosov systems.
\newblock {\em Ergodic Theory Dynam. Systems}, 26(1):189--217, 2006.

\bibitem[GL19]{Gouezel-Lefeuvre-19}
S{\'e}bastien {Gou{\"e}zel} and Thibault {Lefeuvre}.
\newblock {Classical and microlocal analysis of the X-ray transform on Anosov
  manifolds}.
\newblock {\em arXiv e-prints}, Apr 2019.

\bibitem[GPSU16]{Guillarmou-Paternain-Salo-Uhlmann-16}
Colin Guillarmou, Gabriel~P. Paternain, Mikko Salo, and Gunther Uhlmann.
\newblock The {X}-ray transform for connections in negative curvature.
\newblock {\em Comm. Math. Phys.}, 343(1):83--127, 2016.

\bibitem[Gui17]{Guillarmou-17-1}
Colin Guillarmou.
\newblock Invariant distributions and {X}-ray transform for {A}nosov flows.
\newblock {\em J. Differential Geom.}, 105(2):177--208, 2017.

\bibitem[Hel01]{Helgason-01}
Sigurdur Helgason.
\newblock {\em Differential geometry, {L}ie groups, and symmetric spaces},
  volume~34 of {\em Graduate Studies in Mathematics}.
\newblock American Mathematical Society, Providence, RI, 2001.
\newblock Corrected reprint of the 1978 original.

\bibitem[HF19]{Fisher-Hasselblatt-19}
Boris Hasselblatt and Todd Fisher.
\newblock {\em Hyperbolic flows}.
\newblock Zurich Lectures in Advanced Mathematics, 2019.

\bibitem[HM79]{Howe-Moore-79}
Roger~E. Howe and Calvin~C. Moore.
\newblock Asymptotic properties of unitary representations.
\newblock {\em J. Functional Analysis}, 32(1):72--96, 1979.

\bibitem[HP06]{Hasselblatt-Pesin-06}
Boris Hasselblatt and Yakov Pesin.
\newblock Partially hyperbolic dynamical systems.
\newblock In {\em Handbook of dynamical systems. {V}ol. 1{B}}, pages 1--55.
  Elsevier B. V., Amsterdam, 2006.

\bibitem[Jou86]{Journe-86}
Jean-Lin Journ\'{e}.
\newblock On a regularity problem occurring in connection with {A}nosov
  diffeomorphisms.
\newblock {\em Comm. Math. Phys.}, 106(2):345--351, 1986.

\bibitem[Kat95]{Kato-95}
Tosio Kato.
\newblock {\em Perturbation theory for linear operators}.
\newblock Classics in Mathematics. Springer-Verlag, Berlin, 1995.
\newblock Reprint of the 1980 edition.

\bibitem[KH95]{Hasselblatt-Katok-95}
Anatole Katok and Boris Hasselblatt.
\newblock {\em Introduction to the modern theory of dynamical systems},
  volume~54 of {\em Encyclopedia of Mathematics and its Applications}.
\newblock Cambridge University Press, Cambridge, 1995.
\newblock With a supplementary chapter by Katok and Leonardo Mendoza.

\bibitem[KMS93]{Kolar-Michor-Slovak-93}
Ivan Kol\'{a}\v{r}, Peter~W. Michor, and Jan Slov\'{a}k.
\newblock {\em Natural operations in differential geometry}.
\newblock Springer-Verlag, Berlin, 1993.

\bibitem[KN96]{Kobayashi-Nomizu-96}
Shoshichi Kobayashi and Katsumi Nomizu.
\newblock {\em Foundations of differential geometry. {V}ol. {II}}.
\newblock Wiley Classics Library. John Wiley \& Sons, Inc., New York, 1996.
\newblock Reprint of the 1969 original, A Wiley-Interscience Publication.

\bibitem[Liv04]{Liverani-04}
Carlangelo Liverani.
\newblock On contact {A}nosov flows.
\newblock {\em Ann. of Math. (2)}, 159(3):1275--1312, 2004.

\bibitem[Moo87]{Moore-87}
Calvin~C. Moore.
\newblock Exponential decay of correlation coefficients for geodesic flows.
\newblock In {\em Group representations, ergodic theory, operator algebras, and
  mathematical physics ({B}erkeley, {C}alif., 1984)}, volume~6 of {\em Math.
  Sci. Res. Inst. Publ.}, pages 163--181. Springer, New York, 1987.

\bibitem[MS39]{Myers-Steenrod-39}
S.~B. Myers and N.~E. Steenrod.
\newblock The group of isometries of a {R}iemannian manifold.
\newblock {\em Ann. of Math. (2)}, 40(2):400--416, 1939.

\bibitem[Pal57]{Palais-57}
Richard~S. Palais.
\newblock On the differentiability of isometries.
\newblock {\em Proc. Amer. Math. Soc.}, 8:805--807, 1957.

\bibitem[Par99]{Parry-99}
William Parry.
\newblock The {L}iv\v{s}ic periodic point theorem for non-abelian cocycles.
\newblock {\em Ergodic Theory Dynam. Systems}, 19(3):687--701, 1999.

\bibitem[Pla72]{Plante-72}
Joseph~F. Plante.
\newblock Anosov flows.
\newblock {\em Amer. J. Math.}, 94:729--754, 1972.

\bibitem[PP90]{Parry-Pollicott-90}
William Parry and Mark Pollicott.
\newblock Zeta functions and the periodic orbit structure of hyperbolic
  dynamics.
\newblock {\em Ast\'{e}risque}, (187-188):268, 1990.

\bibitem[RS80]{Reed-Simon-80}
Michael Reed and Barry Simon.
\newblock {\em Methods of modern mathematical physics. {I}}.
\newblock Academic Press, Inc. [Harcourt Brace Jovanovich, Publishers], New
  York, second edition, 1980.
\newblock Functional analysis.

\bibitem[Wil10]{Wilkinson-10}
Amie Wilkinson.
\newblock Conservative partially hyperbolic dynamics.
\newblock In {\em Proceedings of the {I}nternational {C}ongress of
  {M}athematicians. {V}olume {III}}, pages 1816--1836. Hindustan Book Agency,
  New Delhi, 2010.

\end{thebibliography}

\end{document}